\definecolor{dmagenta}{rgb}{.4,.1,.5}
\definecolor{dblue}{rgb}{.0,.0,.6}
\definecolor{dred}{rgb}{.5,.0,.0}
\definecolor{dgreen}{rgb}{.0,.4,.0}
\definecolor{005}{rgb}{.0,.0,.5}
\definecolor{violet}{rgb}{.3,.0,.9}
\definecolor{dmag}{cmyk}{.0,.0,1,.5}
\definecolor{dcyan}{cmyk}{1,.0,.0,.5}
\definecolor{cm}{cmyk}{1,.0,.0,.0}
\definecolor{dyellow}{cmyk}{.0,.0,.5,.0}
\newtheorem{lemma}{Lemma}[section]
\newtheorem{theorem}{Theorem}[section]
\newtheorem{corollary}{Corollary}[section]
\theoremstyle{definition}
\newtheorem{definition}{Definition}[section]
\newtheorem{assumption}{Assumption}[section]
\newtheorem{hypothesis}{Hypothesis}[section]
\newtheorem*{algo}{Leaf Elimination Algorithm}
\newtheorem{example}{Example}[section]
\theoremstyle{remark}
\newtheorem{remark}{Remark}[section]
\numberwithin{equation}{section}
\newcommand{\pex}{(e\cdot x)^{+}}
\newcommand{\nex}{(e\cdot x)^{-}}
\newcommand{\him}{{\Hat\imath}}
\newcommand{\hjm}{{\Hat\jmath}}
\newcommand{\cA}{{\mathcal{A}}}
\newcommand{\cB}{{\mathcal{B}}} %Borel Sets
\newcommand{\cE}{{\mathcal{E}}} %edges
\newcommand{\eom}{{\mathscr{G}}}
\newcommand{\cG}{{\mathcal{G}}}  %graph
\newcommand{\sH}{{\mathscr{H}}}  %Half space for diffusion with constraints
\newcommand{\cH}{{\mathcal{H}}}  %partition set for running cost
\newcommand{\cI}{{\mathcal{I}}}  %pools
\newcommand{\cJ}{{\mathcal{J}}}  %customers
\newcommand{\cK}{{\mathcal{K}}} %partition set for running cost
\newcommand{\Lg}{\mathcal{L}}    %generator
\newcommand{\cL}{{\mathscr{L}}} %generator
\newcommand{\cM}{{\mathcal{M}}}  %Invariant Measures
\newcommand{\cP}{{\mathcal{P}}}  %Probability measures
\newcommand{\sR}{{\mathscr{R}}}
\newcommand{\cS}{{\mathcal{S}}}
\newcommand{\sU}{{\mathscr{U}}}
\newcommand{\Lyap}{{\mathcal{V}}}
\newcommand{\sX}{{\mathscr{X}}}
\newcommand{\cX}{{\mathcal{X}}}
\newcommand{\fZ}{{\mathfrak{Z}}}
\newcommand{\RR}{\mathbb{R}}
\newcommand{\NN}{\mathbb{N}}
\newcommand{\ZZ}{\mathbb{Z}}
\newcommand{\RI}{\mathbb{R}^{I}}
\newcommand{\Rd}{\mathbb{R}^{d}}
\DeclareMathOperator{\Exp}{\mathbb{E}}
\DeclareMathOperator{\Prob}{\mathbb{P}}
\newcommand{\D}{\mathrm{d}}
\newcommand{\E}{\mathrm{e}}
\newcommand{\Act}{{\mathbb{U}}}
\newcommand{\Uadm}{\mathfrak{U}}
\newcommand{\Usm}{\mathfrak{U}_{\mathrm{SM}}}
\newcommand{\Ussm}{\mathfrak{U}_{\mathrm{SSM}}}
\newcommand{\sF}{\mathfrak{F}}
\newcommand{\Ind}{\mathbb{I}}
\newcommand{\Cc}{\mathcal{C}} %Continuous Functions
\newcommand{\Ccl}{\mathcal{C}_{\mathrm{loc}}}
\newcommand{\Sobl}{\mathscr{W}_{\mathrm{loc}}}
\newcommand{\abs}[1]{\lvert#1\rvert}
\newcommand{\norm}[1]{\lVert#1\rVert}
\newcommand{\babs}[1]{\bigl\lvert#1\bigr\rvert}
\newcommand{\babss}[1]{\biggl\lvert#1\biggr\rvert}
\newcommand{\transp}{^{\mathsf{T}}}
\newcommand{\df}{:=}
\DeclareMathOperator*{\Argmin}{Arg\,min}
\DeclareMathOperator*{\diag}{diag}
\DeclareMathOperator*{\trace}{trace}
\newcommand{\order}{{\mathscr{O}}}
\newcommand{\sorder}{{\mathfrak{o}}}
\newcommand{\grad}{\nabla}
\newcommand{\tc}{{\Breve\uptau}}
\newcommand{\beql}[1]{\begin{equation}\label{#1}}
\newcommand{\eeq}{\end{equation}}
\begin{document}

%\pagestyle{plain}
%\thispagestyle{empty}
%%%%%%%%%%%%%%%%%%%%%%%%%%%%%%%%%%%%%%%%%%%%%%%%%%%%%%%%%%%%%%%%%%%%%

\title[Ergodic Diffusion Control of Multiclass Multi-Pool Networks]
{Ergodic Diffusion Control of Multiclass Multi-Pool
Networks in the Halfin--Whitt Regime}

\author{Ari Arapostathis}
\address{Department of Electrical and Computer Engineering,
The University of Texas at Austin,
1 University Station, Austin, TX 78712}
\email{ari@ece.utexas.edu}
\author{Guodong Pang}
\address{The Harold and Inge Marcus Department of Industrial and
Manufacturing Engineering,
College of Engineering,
Pennsylvania State University,
University Park, PA 16802}
\email{gup3@psu.edu}

\date{\today}

\begin{abstract}
We consider Markovian multiclass multi-pool networks with heterogeneous server
pools, each consisting of many statistically identical parallel servers,
where the bipartite graph of customer classes and server pools forms a tree. 
Customers form their own queue and are served in the first-come first-served
discipline, and can abandon while waiting in queue. Service rates are both
class and pool dependent. The objective is to study the
limiting diffusion control problems under the long run
average (ergodic) cost criteria in the Halfin--Whitt regime. 
Two formulations of ergodic diffusion control problems are
considered: (i) both queueing and idleness costs are minimized, and (ii) only
the queueing cost is minimized while a constraint is imposed upon the
idleness of all server pools.  
We develop a \emph{recursive leaf elimination algorithm} that enables us to
obtain an explicit representation of the drift for the controlled diffusions.
Consequently, we show that for the limiting controlled diffusions, there always exists
a stationary Markov control under which the diffusion process
is geometrically ergodic. 
The framework developed in \cite{ABP14} is extended to address a
broad class of ergodic diffusion control problems with constraints.
We show that that the unconstrained and constrained problems are well
posed, and we characterize the optimal stationary
Markov controls via  HJB equations.

\end{abstract}

\subjclass[2000]{60K25, 68M20, 90B22, 90B36}

\keywords{multiclass multi-pool Markovian queues, reneging/abandonment, 
Halfin--Whitt (QED) regime, controlled diffusion,  
long time average control, ergodic control,  ergodic control with constraints, 
stable Markov optimal control, spatial truncation}

\maketitle

\section{Introduction}

Consider a multiclass parallel server networks with  $I$ classes of customers (jobs) and $J$
parallel server pools, each of which has many statistically identical servers.
Customers of each class can be served in a subset of the server pools, and each
server pool can serve a subset of the customer classes, which forms a
bipartite graph.
We assume that this bipartite graph is a tree. 
  Customers of each class arrive according to a Poisson process and
form their own queue. They are served in the first-come-first-served (FCFS)
discipline. 
Customers waiting in queue  may renege if their patience times are reached
before entering service. The patience times are exponentially distributed
with class-dependent rates, while  
the service times are also exponentially distributed with rates depending on
both the customer class and the server pool. 
The scheduling and routing control decides which class of customers to serve
(if any waiting in queue) when a server becomes free, and which server pool to
route a customer when multiple server pools have free servers to serve the
customer. 
 We  focus on preemptive scheduling policies that satisfy the
usual work conserving condition (no server can idle if a customer it can serve
is in queue), as well as the joint work conserving condition
\cite{Atar-05a, Atar-05b,  Atar-09} under which, customers can be rearranged
in such a manner that no server will idle when a customer of some class is
waiting in queue.   
In this paper, we study the diffusion control problems of such multiclass multi-pool networks under the long run average (ergodic) cost criteria in the
Halfin--Whitt regime. 

We consider two formulations of the ergodic diffusion control problems. 
In the first formulation, both queueing and idleness are penalized in the running cost, and we refer to this as the ``\emph{unconstrained}" problem. In the second formulation, only the queueing cost is minimized, while a constraint is imposed upon the idleness of all server pools. We refer to this as the ``\emph{constrained}" problem. The constraint can be regarded as a ``fairness" condition on server pools. 
We aim to study the recurrence properties of the controlled diffusions,  the
well-posedness of  these two ergodic diffusion control problems,
and characterize the optimal stationary Markov controls via
 Hamilton--Jacobi--Bellman (HJB) equations. 

The diffusion limit of the queueing processes for the multiclass multi-pool networks was established by Atar \cite{Atar-05a, Atar-05b}. Certain properties of the controlled diffusions were proved in \cite{Atar-05a, Atar-09}, with
the objective of studying the diffusion control problem under the discounted cost criterion. However those properties do not suffice for the study the ergodic control problem.
Our first task is to obtain a good understanding of the recurrence
properties of the limiting controlled diffusions. The main obstacle lies in the \emph{implicitness} of the drift, which is represented via the solution of a linear program (Section~\ref{sec-diffusionlimit}). 
Our first key contribution is to provide an \emph{explicit} representation
of the drift of the limiting controlled diffusions via a recursive leaf elimination
algorithm (Sections~\ref{sec-leaf-alg} and \ref{sec-example}).
As a consequence, we show that the controlled diffusions have a piecewise linear drift
(Lemma~\ref{lem-drift}), which, unfortunately, does not belong to the class
of piecewise linear diffusions studied in \cite{dieker-gao} and \cite{ABP14},
despite the somewhat similar representations. The dominating matrix in the drift is a Hurwitz lower-diagonal matrix, instead of the negative of an $M$-matrix. 
Applying the leaf elimination algorithm, we show that for any Markovian multiclass multi-pool (acyclic)
network in the Halfin--Whitt regime, assuming that the abandonment rates are not identically zero, 
there exists a stationary Markov control under which the limiting diffusion
is geometrically ergodic, and as a result, its invariant probability distribution
has all moments finite  (Theorem~\ref{T-stab}).

A new framework to study ergodic diffusion control problems was introduced in \cite{ABP14}, in order to study the multiclass single-pool network (the ``V" model) in the Halfin-Whitt regime. It imposes a structural assumption 
(Hypothesis~3.1),
which extends the applicability of the theory beyond the two dominant
models in the study of ergodic control for diffusions \cite{book}:
(i) the running cost is
\emph{near-monotone} and (ii) the controlled diffusion is \emph{uniformly stable}. The relevant results are reviewed in Section~\ref{sec-structural}. 
Like the ``V" model, the ergodic control problems of diffusions associated
with multiclass
multi-pool networks do not fall into any of those two categories.  
We show that the ``unconstrained" ergodic diffusion control problem is well-posed and
can be solved using the framework in \cite{ABP14}.
Verification of the structural assumption in Hypothesis~3.1, relies heavily upon the explicit representation of the drift in the limiting controlled diffusions
(Theorem~\ref{T-PA}).
%The well-posedness of the unconstrained problem then follows from this verification and the existence of a stationary stable Markov control (Theorem~\ref{T-stab}).
We then establish the existence  of an optimal stationary Markov control,
characterize all such controls
via an HJB equation in Section~\ref{sec-HJBunconstrained}. 

Ergodic control with constraints for diffusions was studied in
\cite{BorkGh-90a,Borkar93a}; see Sections~4.2 and 4.5 in \cite{book}. 
However, the existing methods and theory also fall into the same two categories
mentioned above.
Therefore, to study the well-posedness and solutions of the ``constrained" problem, we
extend the framework in \cite{ABP14} to ergodic diffusion control
problems with constraints under the same structural assumption
in Section~\ref{sec-constraint}.
The
well-posedness of the constrained problem follows by Lemma~\ref{L3.3}
of that section. 
We also characterize the optimal
stationary controls via an HJB equation, which has a unique solution
in a certain class (Theorems~\ref{T3.1} and \ref{T-unique}).
We also extend the ``spatial truncation" technique developed in \cite{ABP14}
to  problems under constraints (Theorems~\ref{T3.3} and \ref{T3.4}). 
These results are applied to the ergodic diffusion control problem with constraints
for the multiclass multi-pool networks in Section~\ref{sec-HJBconstrained}. 
The special case of fair allocation of idleness in
the constrained problem is discussed in Section~\ref{sec-fair}. 

It is worth noting that if we only penalize the queue but not the idleness,
the unconstrained ergodic control problem
may not be well-posed.
We discuss the verification of the structural assumption (Hypothesis~3.1),
in this formulation of the ergodic diffusion control problem in
Section~\ref{sec-special}.
We show that under certain restrictions on the systems parameters or network structure,
Hypothesis~3.1 can be verified and this formulation is therefore well-posed
(see Corollaries~\ref{C4.1} and \ref{C4.2}, and Remark~\ref{R4.6}).

\subsection{Literature review}

Scheduling and routing control of multiclass multi-pool networks in the
Halfin--Whitt has been studied extensively in the recent literature.
Atar \cite{Atar-05a, Atar-05b} was the first to study scheduling and routing control
problem under infinite-horizon discounted cost. He has  solved
the scheduling control problem under a set of conditions on the network structure
and parameters, and the running cost function (Assumptions~2 and 3 in
\cite{Atar-05b}).
Simplified models with either class only, or pool only dependent service
rates under the infinite-horizon discounted cost are further studied in
Atar et al. \cite{Atar-09}. 
Gurvich and Whitt \cite{GW09, GW09b, GW10} studied queue-and-idleness-ratio
controls and their associated properties and staffing implications for
multiclass multi-pool networks, by proving a state-space-collapse (SSC) property
under certain conditions on the network structure and system parameters
(Theorems~3.1 and 5.1 in \cite{GW09}). 
Dai and Tezcan \cite{dai-tezcan-08, dai-tezcan-11} studied scheduling
controls of multiclass multi-pool networks in the finite-time horizon,
also by proving an SSC property under certain assumptions.
Despite all these results that have helped us better understand the performance
of a large class of multiclass multi-pool networks, there is a lack of good
understanding of the behavior of the limiting controlled diffusions due to
the \emph{implicit} form of its drift. 
Our result on an explicit representation of the drift breaks this fundamental barrier.

There is limited literature on ergodic control of multiclass multi-pool networks
in the Halfin--Whitt regime.
Ergodic control of the multiclass ``V" model is recently studied in \cite{ABP14}.
Armony \cite{Armony-05} studied the inverted ``V" model and showed that the
fastest-server-first policy is asymptotically optimal for minimizing the
steady-state expected queue length and waiting time.
Armony and Ward \cite{AW-10} showed that for the inverted ``V" model,
a threshold policy is asymptotically optimal for minimizing the steady-state
the expected queue length and waiting time subject to a ``fairness" constraint
on the workload division. 
Ward and Armony \cite{WA-13} studied blind fair routing policies for multiclass
multi-pool networks, which is based on the number of customers waiting and the
number of severs idling but not on the system parameters, and used simulations to
validate the performance of the blind fair routing policies comparing them with
non-blind policies derived from the limiting diffusion control problem.
Biswas \cite{Biswas-15} recently studied a multiclass multi-pool network with
``help" where each server pool has a dedicated stream of a customer class,
and can help with other customer classes only when it has idle servers.
In such a network, the control policies may not be work-conserving, and from
the technical perspective, the associated controlled diffusion has a
uniform stability property, which is not satisfied for general multiclass
multi-pool networks.

\subsection{Organization}
The rest of this section contains a summary of the notation used in the paper. 
In Section~\ref{sec-MM-model}, we introduce the multiclass multi-pool
parallel server network model, the asymptotic Halfin--Whitt regime,
the state descriptors and the admissible scheduling and routing controls.
In Section~\ref{sec-MM-control-hw}, we introduce the diffusion-scaled processes in the Halfin-Whitt regime and the associated control parameterization, and in Section~\ref{sec-diffusionlimit} we state the limiting controlled diffusions.
In Section~\ref{sec-MM-control-diff}, we describe the two formulations of the ergodic diffusion control problems. 
In Section~\ref{S3}, we first review the general model
of controlled diffusions studied in \cite{ABP14}, and then state the
general hypotheses and the associated stability results
(Section~\ref{sec-structural}).
We then study the associated ergodic control problems with constraints
in Section~\ref{sec-constraint}. 
We focus on the recurrence properties of the controlled diffusions for
multiclass multi-pool networks in Section~\ref{sec-stabilizability}.
The leaf elimination algorithm and the resulting drift representation are
introduced in Section~\ref{sec-leaf-alg}, and some examples applying the
algorithm are given in Section~\ref{sec-example}.
We verify the structural assumption of Section~\ref{sec-structural} and study the positive recurrence properties of the limiting controlled diffusions
in Section~\ref{sec-verify}. 
We discuss
some special cases in Section~\ref{sec-special}.
The optimal stationary Markov controls for the limiting diffusions are
characterized in Section~\ref{sec-HJB}.
Some concluding remarks are given in
Section~\ref{sec-conclusion}. 

\subsection{Notation}
The following notation is used in this paper.
The symbol $\RR$, denotes the field of real numbers,
and $\RR_{+}$ and $\NN$ denote the sets of nonnegative
real numbers and natural numbers, respectively.
Given two real numbers $a$ and $b$, the minimum (maximum) is denoted by $a\wedge b$ 
($a\vee b$), respectively.
Define $a^{+}\df a\vee 0$ and $a^{-}\df-(a\wedge 0)$. 
The integer part of a real number $a$ is denoted by $\lfloor a\rfloor$.
We use the notation 
$e_{i}$, $i=1,\dotsc,d$, to denote the vector with $i^\text{th}$ entry equal to $1$
and all other entries equal to $0$.
We also let $e\df (1,\dotsc,1)\transp$.

For a set $A\subset\Rd$, we use
$\Bar A$, $A^{c}$, $\partial A$, and $\Ind_{A}$ to denote the closure,
the complement, the boundary, and the indicator function of $A$, respectively.
A ball of radius $r>0$ in $\Rd$ around a point $x$ is denoted by $B_{r}(x)$,
or simply as $B_{r}$ if $x=0$.
The Euclidean norm on $\Rd$ is denoted by $|\cdot|$,
$x\cdot y$,
denotes the inner product of $x,y\in\RR^{d}$,
and $\norm{x}\df \sum_{i=1}^{d}\abs{x_{i}}$.

For a nonnegative function $g\in\Cc(\RR^{d})$ 
we let $\order(g)$ denote the space of functions
$f\in\Cc(\RR^{d})$ satisfying
$\sup_{x\in\RR^{d}}\;\frac{\abs{f(x)}}{1+g(x)}<\infty$.
This is a Banach space under the norm
\begin{equation*}
\norm{f}_{g} \;\df\;
\sup_{x\in\RR^{d}}\;\frac{\abs{f(x)}}{1+g(x)}\,.
\end{equation*}
We also let $\sorder(g)$ denote the subspace of $\order(g)$ consisting
of those functions $f$ satisfying
\begin{equation*}
\limsup_{\abs{x}\to\infty}\;\frac{\abs{f(x)}}{1+g(x)}\;=\;0\,.
\end{equation*}
Abusing the notation, $\order(x)$ and $\sorder(x)$ occasionally denote
generic members of these sets.
For two nonnegative functions $f$ and $g$, we use the notation $f\sim g$
to indicate that $f\in\order(g)$ and $g\in\order(f)$.

We denote by $L^{p}_{\mathrm{loc}}(\RR^{d})$, $p\ge 1$, the set of
real-valued functions
that are locally $p$-integrable and by
$\Sobl^{k,p}(\RR^{d})$ the set of functions in $L^{p}_{\mathrm{loc}}(\RR^{d})$
whose $i^\text{th}$ weak derivatives, $i=1,\dotsc,k$, are in
$L^{p}_{\mathrm{loc}}(\RR^{d})$.
The set of all bounded continuous functions is denoted by $\Cc_{b}(\RR^{d})$.
By $\Ccl^{k,\alpha}(\RR^{d})$ we denote the set of functions that are
$k$-times continuously differentiable and whose $k^\text{th}$ derivatives are locally
H\"{o}lder continuous with exponent $\alpha$.
We define $\Cc^{k}_{b}(\RR^{d})$, $k\ge 0$, as the set of functions
whose $i^\text{th}$ derivatives, $i=1,\dotsc,k$, are continuous and bounded
in $\RR^{d}$ and denote by
$\Cc^{k}_{c}(\RR^{d})$ the subset of $\Cc^{k}_{b}(\RR^{d})$ with compact support.
For any path $X(\cdot)$
we use the notation $\Delta X(t)$ to denote the jump at time $t$.

Given any Polish space $\cX$, we denote by $\cP(\cX)$ the set of
probability measures on $\cX$ and we endow $\cP(\cX)$ with the
Prokhorov metric.
Also $\cB(\cX)$ denotes its Borel $\sigma$-algebra.
By $\delta_{x}$ we denote the Dirac mass at $x$.
For $\nu\in\cP(\cX)$ and a Borel measurable map $f\colon\cX\to\RR$,
we often use the abbreviated notation
\begin{equation*}\nu(f)\;\df\; \int_{\cX} f\,\D{\nu}\,.\end{equation*}
The quadratic variation of a square integrable martingale is
denoted by $\langle\,\cdot\,,\cdot\,\rangle$ and the optional quadratic
variation by
$[\,\cdot\,,\cdot\,]$. For presentation purposes we use the time variable 
as the subscript for the diffusion processes. 
Also $\kappa_{1},\kappa_{2},\dotsc$ and $C_{1},C_{2},\dotsc$
are used as generic constants whose values might vary from place to place.

\medskip
\section{Controlled Multiclass Multi-Pool Networks in the Halfin--Whitt Regime}

\subsection{The multiclass multi-pool network model} \label{sec-MM-model}
All stochastic variables introduced below are defined on a complete
probability space 
$(\Omega,\mathfrak{F},\Prob)$.  The expectation w.r.t. $\Prob$ is denoted by
$\Exp$. We consider a sequence of network systems with the associated variables,
parameters and processes indexed by $n$. 

Consider a multiclass multi-pool Markovian network with $I$ classes of customers
and $J$ server pools.  
The classes are labeled as $1,\dots, I$ and the server pools as $1,\dots, J$.
Set $\cI = \{1,\dots,I\}$ and $\cJ = \{1, \dots, J\}$. 
Customers of each class form their own queue and are served in the
first-come-first-served (FCFS) service discipline.
The buffers of all classes are assumed to have infinite capacity.
Customers can abandon/renege while waiting in queue. 
Each class of customers can be served by a subset of server pools,
and each server pool can serve a subset of customer classes.
For each $i \in \cI$, let $\cJ(i) \subset \cJ$
be the subset of server pools that can serve class $i$ customers, and for each
$j \in \cJ$, let $\cI(j) \subset \cI$ be the subset
of customer classes that can be served by server pool $j$.
For each $i \in \cI$ and $j \in \cJ$, if customer class $i$
can be served by server pool $j$, we denote $i \sim j$ as an edge in the bipartite
graph formed by the nodes in $\cI$ and $\cJ$;
otherwise, we denote $i \nsim j$.
Let $\cE$ be the collection of all these edges.
Let $\cG = (\cI\cup \cJ, \cE)$ be the
bipartite graph formed by the nodes (vertices) $\cI\cup \cJ$
and the edges $\cE$. We assume that the graph $\cG$ is connected. 

For each $j \in \cJ$, let $N_j^{n}$ be the number of servers
(statistically identical) in server pool $j$.
Customers of class $i \in \cI$ arrive according to a Poisson
process with rate
$\lambda^{n}_i>0$, $i \in \cI$,
and have class-dependent exponential abandonment rates $\gamma_i^{n} \ge 0$.
These customers are served at an exponential rate
$\mu_{ij}^{n}>0$ at server pool $j$, if $i \sim j$, and otherwise,
we set $\mu_{ij}^{n}=0$. 
We assume that the customer arrival, service, and abandonment processes of
all classes are mutually independent. 
The edge set $\cE$ can thus be written as
\begin{equation*}
\cE\;=\;\bigl\{(i, j) \in \cI\times \cJ\;\colon\, \mu_{ij}^{n} >0\bigr\}\,.
\end{equation*}
A pair $(i, j) \in \cE$ is called an \emph{activity}. 

\smallskip
\subsubsection{The Halfin--Whitt regime} 
We study these multiclass multi-pool networks in the Halfin--Whitt regime
(or the Quality-and-Efficiency-Driven (QED) regime), where the arrival
rates of each class and the numbers of servers of each server pool grow
large as $n \to \infty$ in such a manner that the system becomes
critically loaded. 
In particular, the set of parameters is assumed to satisfy the following:
as $n\to\infty$, the following limits exist
\begin{equation}\label{HWpara1}
\begin{split}
\frac{\lambda^{n}_{i}}{n} \;\to\;\lambda_{i}\;>\;0\,,\qquad
\frac{N^{n}_{j}}{n} \;\to\;\nu_{j}\;>\;0\,,\qquad
\mu_{ij}^{n} \;\to\;\mu_{ij}\;\ge\;0\,,\qquad
\gamma_{i}^{n} \;\to\;\gamma_{i}\;\ge \;0\,,
\end{split}
\end{equation}
\begin{equation}\label{HWpara2}
\begin{split}
\frac{\lambda^{n}_{i} - n \lambda_{i}}{\sqrt n} \;\to\;\Hat{\lambda}_{i}\,,\qquad
{\sqrt n}\,(\mu^{n}_{ij} - \mu_{ij}) \;\to\;\Hat{\mu}_{ij}\,,
\qquad {\sqrt n}\,(n^{-1} N^{n}_{j} -  \nu_{j}) \;\to\; 0 \,,   \\[5pt]
\end{split}
\end{equation}
where $\mu_{ij}>0$ for $i\sim j$ and $\mu_{ij}=0$ for $i\nsim j$. 
Note that we allow the abandonment rates to be zero for some, but not for all
$i \in \cI$.

In addition, we assume that there exists a unique optimal solution
$(\xi^*, \rho^*)$  satisfying 
\begin{equation} \label{criticalfluid}
\sum_{i \in \cI} \xi^*_{ij} \;=\; \rho^*\;=\; 1, \quad \forall j \in \cJ \,,
\end{equation}
and $ \xi^*_{ij}>0$ for all $i \sim j $ (all activities) in
$\cE$, to the following linear program (LP):
\begin{align*}
\text{Minimize} \quad & \rho \\[5pt]
\text{subject to} \quad & \sum_{j \in \cJ} \mu_{ij} \nu_j \xi_{ij}
\;=\; \lambda_i, \quad i \in \cI, \nonumber \\ 
& \sum_{i \in \cI} \xi_{ij} \;\le\; \rho, \quad j \in \cJ, \nonumber \\
& \xi_{ij} \;\ge\; 0, \quad  i \in \cI,~  j \in \cJ .
\end{align*}
This assumption is referred to as the \emph{complete resource pooling} condition
\cite{williams-2000, Atar-05b}. 
It implies that the graph $\cG$ is a tree \cite{williams-2000, Atar-05b}.
Following the terminology in \cite{williams-2000, Atar-05b}, this assumption
also implies that all activities in $\cE$ are \emph{basic} since
$\xi^*_{ij}>0$ for each activity $(i,j)$ or edge $i \sim j$ in $\cE$.
Note that in our setting all activities are basic.

We define the vector $x^*=(x_{i}^{*})_{i \in \cI}$ and matrix
$z^* = (z_{ij}^*)_{ i \in \cI, \,  j \in \cJ}$ by
\begin{equation} \label{staticfluid}
x_{i}^{*}\;=\;\sum_{j \in \cJ} \xi^*_{ij} \nu_j\,,
\qquad z_{ij}^* = \xi_{ij}^* \nu_j \,. 
\end{equation}
The vector  $x^*=(x_{i}^{*})$ can be interpreted as the steady-state total number
of customers in each class, and the matrix $z^*$ as the steady-state number
of customers in each class receiving service, in the fluid scale.
Note that the steady-state queue lengths are all zero in the fluid scale.
The solution $\xi^*$ to the LP is the steady-state proportion of customers
in each class at each server pool. It is evident that \eqref{criticalfluid}
and \eqref{staticfluid} imply that
\begin{equation*}
e\cdot  x^*= e \cdot  \nu, 
\end{equation*}
where $\nu\df(\nu_j)_{j \in \cJ}$. 

\smallskip
\subsubsection{The state descriptors} 
For each $i \in \cI$ and $j \in \cJ$, let
$X^{n}_i = \{X^{n}_i(t): t\ge 0\}$ be the total number of class $i$ customers
in the system, $Q^{n}_i = \{Q^{n}_i(t): t\ge 0\}$ be the number of class $i$
customers in the queue, $Z_{ij}^{n} = \{Z_{ij}^{n}(t): t\ge 0\}$ be the
number of class $i$ customers being served in server pool $j$, and
$Y^{n}_j = \{Y^{n}_i(t): t\ge 0\}$ be the number of idle servers in server pool $j$.
Set $X^{n} = (X_i^{n})_{i \in \cI}$, $Y^{n} = (Y_i^{n})_{i \in \cI}$, 
$Q^{n} = (Q_i^{n})_{i \in \cI}$,
and $Z^{n} = (Z_{ij}^{n})_{i \in \cI,\, j \in \cJ}$. 
The following fundamental equations hold:
for each $i \in \cI$ and $j \in \cJ$ and $t\ge 0$, we have
\begin{align} \label{baleq}
& X^{n}_i(t)\;=\;Q_i^{n}(t) + \sum_{j \in \cJ(i)} Z_{ij}^{n}(t)\,, \nonumber\\[5pt]
& N_j^{n}\;=\;Y_j^{n}(t) + \sum_{i \in\cI(j)} Z_{ij}^{n}(t)\,,  \\[5pt]
& X^{n}_i(t) \ge 0\,, \quad Q_i^{n}(t) \ge 0\,, \quad Y_j^{n}(t) \ge 0\,,
\quad Z_{ij}^{n}(t) \ge 0\,.  \nonumber
\end{align}
The processes $X^{n}$ can be represented via rate-$1$ Poisson processes:
for each $i \in \cI$ and $t\ge 0$, it holds that
\begin{align} \label{Xrep}
X^{n}_i(t)\;=\;X^{n}_i(0) + A^{n}_i(\lambda^{n} t) - \sum_{j \in \cJ(i)}
S^{n}_{ij} \left( \mu_{ij}^{n}\int_0^t Z_{ij}^{n}(s) \D{s} \right)
- R_i^{n} \left(\gamma_i^{n} \int_0^t Q^{n}_i(s) \D{s} \right)\,,
\end{align}
where the processes $A^{n}_i$, $S^{n}_{ij}$ and $R^{n}_i$ are all rate-1 Poisson
processes and mutually independent, and independent of the initial
quantities $X^{n}_i(0)$. 

\smallskip
\subsubsection{Scheduling control}

We  only consider work conserving policies that are non-anticipative and
preemptive. 
The scheduling decisions are two-fold: (i) when a server becomes free,
if there are customers waiting in one or several buffers,
it has to decide which customer to serve, and (ii) when a customer arrives,
if she finds there are several free servers in one or multiple server pools,
the manager has to decide which server pool to assign the customer to.
These decisions determine the processes $Z^{n}$ at each time. 

Work conservation requires that  whenever there are customers waiting in queues,
if a server becomes free and can serve one of the customers, the server cannot
idle and must decide which customer to serve and start service immediately.
Namely, the processes $Q^{n}$ and $Y^{n}$ satisfy
\begin{equation} \label{wc}
Q_i^{n}(t) \wedge Y^{n}_j(t)\;=\;0
\qquad \forall i \sim j\,, \quad\forall\, t \ge 0\,. 
\end{equation}
Service preemption is allowed, that is, service of a customer can be
interrupted at any time to serve some other customer of another class and
resumed at a later time. 
Following \cite{Atar-05b}, we also consider a stronger condition,
\emph{joint work conservation} (JWC), for preemptive scheduling policies.
Specifically, let $\sX^{n}$ be the set of all possible values of
$X^{n}(t)$ at each time $t\ge 0$ for which there is a rearrangement of customers
such that there is no customer in queue or no idling server in the system and
the processes $Q^{n}$ and $Y^{n}$ satisfy
\begin{equation} \label{jointwc}
e\cdot Q^{n}(t) \wedge e \cdot Y^{n}(t)\;=\;0\,,  \quad t \ge 0 \,. 
\end{equation} 
Note that the set $\sX^{n}$ may not include all possible scenarios
of the system state $X^{n}(t)$ for finite $n$ at each time $t\ge 0$.

We define the action set $\Act^{n}(x)$ as 
\begin{multline*}
\Act^{n}(x)\;\df\; \biggl\{z \in \RR^{I \times J}_+\;\colon\, z_{ij} \le x_i\,,
z_{ij} \le N_j^{n},~ q_i = x_i - \sum_{j \in \cJ(i)} z_{ij}\,,~ y_j = N_j^{n}
- \sum_{i \in \cI(j)}z_{ij}\,, \\[3pt]
\qquad q_i \wedge y_j = 0\quad\forall i \sim j\,,~
e\cdot q \wedge e \cdot y =0  \biggr\}\,. 
\end{multline*}
Then we can write $Z^{n}(t) \in \Act^{n}(X^{n}(t))$ for each $t\ge 0$. 

Define the $\sigma$-fields 
\begin{equation*}
\mathcal{F}^{n}_t \;\df\; \sigma \bigl\{ X^{n}(0), \Tilde{A}^{n}_i(t),
\Tilde{S}^{n}_{ij}(t), \Tilde{R}^{n}_i(t)\;\colon\, i \in \cI, \; j \in \cJ,
\; 0 \le s \le t \bigr\} \vee \mathcal{N} \,,
\end{equation*}
and
\begin{equation*}
\mathcal{G}^{n}_t \;\df\; \sigma \bigl\{ \delta\Tilde{A}^{n}_i(t, r),
\delta \Tilde{S}^{n}_{ij}(t, r), \delta\Tilde{R}^{n}_i(t, r)\;\colon\, i \in \cI,
\; j \in \cJ, \; r \ge 0 \bigr\} \,,
\end{equation*}
where  $\mathcal{N}$ is the collection of all $\Prob$-null sets, 
\begin{equation*}
\Tilde{A}^{n}_i(t) \;\df\; A^{n}_i(\lambda_i^{n} t),
\quad \delta\Tilde{A}^{n}_i(t,r)
\;\df\; \Tilde{A}^{n}_i(t+r) - \Tilde{A}^{n}_i(t) \,, 
\end{equation*}
\begin{equation*}
\Tilde{S}^{n}_{ij}(t) \;\df\; S^{n}_{ij} \left(\mu_{ij}^{n}
\int_0^t Z_{ij}^{n}(s)\,\D{s} \right),
\quad \delta \Tilde{S}^{n}_{ij}(t, r)\;\df\; S^{n}_{ij}
\left( \mu_{ij}^{n}\int_0^t Z_{ij}^{n}(s)\,\D{s} + \mu_{ij}^{n} r \right)
- \Tilde{S}^{n}_{ij}(t) \,,
\end{equation*}
and
\begin{equation*}
\Tilde{R}^{n}_i(t)\;\df\; R_i^{n}
\left(\gamma_i^{n} \int_0^t Q^{n}_i(s)\,\D{s} \right)\,,
\quad \delta \Tilde{R}^{n}_i(t, r)\;\df\; R_i^{n}
\left(\gamma_i^{n} \int_0^t Q^{n}_i(s)\, \D{s}
+ \gamma_i^{n} r \right) -  \Tilde{R}^{n}_i(t) \,. 
\end{equation*}
The filtration $ \mathbf{F}^{n}\df\{\mathcal{F}^{n}_t: t \ge 0\}$ represents
the information available up to time $t$, and the filtration 
$\mathbf{G}^{n}\df\{\mathcal{G}^{n}_t: t \ge 0\}$ contains the information
about future increments of the processes. 
We say that  a scheduling policy is \emph{admissible} if 
\begin{enumerate}
\item[(i)] the `balance' equations in \eqref{baleq} hold.
\smallskip
\item[(ii)] $Z^{n}(t)$ is adapted to $\mathcal{F}^{n}_t$;
\smallskip
\item[(iii)] $\mathcal{F}^{n}_t$ is independent of $\mathcal{G}^{n}_t$ at each time
$t\ge 0$;
\smallskip
\item[(iv)] for each $i \in \cI$ and $i \in \cJ$, and for each
$t\ge 0$, the process  $\delta \Tilde{S}^{n}_{ij}(t, \cdot)$ agrees in law
with $S^{n}_{ij}(\mu_{ij}^{n}\,\cdot)$, and the process
$\delta \Tilde{R}^{n}_i(t, \cdot)$
agrees in law with 
$R^{n}_i (\gamma_i^{n} \cdot)$.  
\end{enumerate}
We denote the set of all admissible scheduling policies
$(Z^{n}, \mathbf{F}^{n}, \mathbf{G}^{n})$ by $\mathfrak{Z}^{n}$.
Abusing the notation we sometimes denote this as $Z^{n}\in\mathfrak{Z}^{n}$.

\smallskip
\subsection{Diffusion Scaling in the Halfin--Whitt regime}
\label{sec-MM-control-hw}
We define the diffusion-scaled processes 
$\Hat{X}^{n}\,=\,(\Hat{X}^{n}_i)_{i\in \cI}\,$,
$\Hat{Q}^{n}\,=\,(\Hat{Q}^{n}_i)_{i\in \cI}\,$,
$\Hat{Y}^{n}\,=\,(\Hat{Y}^{n}_j)_{j\in \cJ}\,$,
and $\Hat{Z}^{n}\,=\,(\Hat{Z}^{n}_{ij})_{i\in \cI,\,j \in \cJ}\,$,  
by
\begin{equation} \label{DiffDef}
\begin{split}
\Hat{X}^{n}_i(t) &\;\df\; \frac{1}{\sqrt{n}} (X_i^{n}(t) - n x_{i}^{*}) \,,  \\
\Hat{Q}^{n}_i(t) &\;\df\; \frac{1}{\sqrt{n}} Q_i^{n}(t) \,,  \\
\Hat{Y}^{n}_j(t) &\;\df\; \frac{1}{\sqrt{n}} Y_j^{n}(t) \,, \\
\Hat{Z}^{n}_{ij}(t) &\;\df\; \frac{1}{\sqrt{n}} (Z_{ij}^{n}(t) - n z_{ij}^*)\,.
\end{split}
\end{equation}

By \eqref{staticfluid}, \eqref{baleq}, and \eqref{DiffDef},
we obtain the balance equations: for  all $t\ge 0$, we have
\begin{equation}\label{baleq-hat}
\begin{split}
& \Hat{X}^{n}_i(t) \;=\; \Hat{Q}_i^{n}(t) + \sum_{j \in \cJ(i)} \Hat{Z}^{n}_{ij}(t)
\qquad \forall i \in \cI\,, \\[5pt]
 & \Hat{Y}^{n}_j(t) + \sum_{i\in \cI(j)} \Hat{Z}^{n}_{ij}(t) \;=\; 0
 \qquad \forall j \in \cJ \,.
\end{split}
\end{equation}
Also, the work conservation conditions in \eqref{wc}, \eqref{jointwc},
translate to
$\Hat{Q}_i^{n}(t) \wedge \Hat{Y}^{n}_j(t)\;=\;0$ for all $i \sim j$,
and $e\cdot \Hat{Q}^{n}(t) \wedge e \cdot \Hat{Y}^{n}(t)\;=\;0$, 
respectively.
By \eqref{baleq-hat},
we obtain
\begin{equation*}
e\cdot \Hat{X}^{n}(t) \;=\; e\cdot \Hat{Q}^{n}(t) - e \cdot \Hat{Y}^{n}(t)\,,
\end{equation*}
and therefore the joint work conservation
condition is equivalent to
\begin{equation}\label{E-joint}
e\cdot \Hat{Q}^{n}(t) \;=\; \bigl(e\cdot \Hat{X}^{n}(t)\bigr)^{+}\,,
\qquad e \cdot \Hat{Y}^{n}(t) \;=\; \bigl(e\cdot \Hat{X}^{n}(t)\bigr)^{-}\,.
\end{equation}
In other words, in the diffusion scale and under joint work conservation,
the total number of customers in queue and the total number of idle servers
are equal to the positive
and negative parts of the centered total number of customers in the system,
respectively.

Let
\begin{equation*}
\begin{split}
\Hat{M}^{n}_{A, i}(t) &\;\df\;  \frac{1}{\sqrt{n}}(A^{n}_i(\lambda_i^{n} t)
- \lambda_i^{n} t), \\[5pt]
 \Hat{M}^{n}_{S, ij}(t) &\;\df\; \frac{1}{\sqrt{n}}\left( S^{n}_{ij}
 \left( \mu_{ij}^{n}\int_0^t Z_{ij}^{n}(s) \D{s} \right)
 - \mu_{ij}^{n}\int_0^t Z_{ij}^{n}(s) \D{s}\right) \,,\\[5pt]
\Hat{M}^{n}_{R, i}(t) &\;\df\;\frac{1}{\sqrt{n}}
\left(R_i^{n} \left(\gamma_i^{n} \int_0^t Q^{n}_i(s) \D{s} \right)
-\gamma_i^{n} \int_0^t Q^{n}_i(s) \D{s} \right)\,.
\end{split}
\end{equation*}
These are square integrable martingales w.r.t. the filtration $\mathbf{F}^{n}$
with quadratic variations
\begin{equation*}
\langle \Hat{M}^{n}_{A, i} \rangle(t) \;\df\; \frac{\lambda_i^{n}}{n} t\,, \quad 
\langle \Hat{M}^{n}_{S, ij} \rangle (t) \;\df\; \frac{\mu_{ij}^{n}}{n}
\int_0^t Z_{ij}^{n}(s)\D{s}\,,\quad 
\langle \Hat{M}^{n}_{R, i} \rangle (t)\;\df\;\frac{\gamma_i^{n}}{n}
\int_0^t Q^{n}_i(s)\D{s} \,. 
\end{equation*}
By \eqref{Xrep}, we can write $\Hat{X}^{n}_i(t)$ as
\begin{multline} \label{hatXn-1}
\Hat{X}^{n}_i(t) \;=\; \Hat{X}^{n}_i(0) + \ell_i^{n} t
- \sum_{j \in \cJ(i)} \mu_{ij}^{n} \int_0^t \Hat{Z}^{n}_{ij}(s) \D{s}
- \gamma^{n}_i \int_0^t \Hat{Q}^{n}_i(s) \D{s}  \\[5pt]
+ \Hat{M}^{n}_{A, i} (t) - \Hat{M}^{n}_{S, ij}(t) - \Hat{M}^{n}_{R, i}(t)\,, 
\end{multline}
where 
$\ell^{n} = (\ell^{n}_1,\dotsc,\ell^{n}_I)\transp$ is defined as
\begin{equation*} %\label{elldef}
\ell^{n}_i\;\df\; \frac{1}{\sqrt{n}} \left(\lambda^{n}_i
-  \sum_{i \in \cJ(i)} \mu_{ij}^{n} z_{ij}^* n \right) \,,
\end{equation*}
with $z^*_{ij}$ as defined in \eqref{staticfluid}.
Note that under the assumptions on the parameters in
\eqref{HWpara1}--\eqref{HWpara2} and the first constraint in the LP,
it holds that
\begin{equation} \label{ellconv}
\ell^{n}_i \;\xrightarrow[n\to\infty]{}\;
\ell_i\;\df\; \Hat{\lambda}_i - \sum_{j \in \cJ(i)} \Hat{\mu}_{ij} z_{ij}^*\,. 
\end{equation}
We let $\ell\df(\ell_1,\dotsc,\ell_I)\transp$.

\subsubsection{Control parameterization}\label{S-cparam}
Define the following processes: for $i\in\cI$, and $t\ge 0$, 
\begin{equation} \label{lb-Ucn}
U^{c,n}_i(t) \;\df\; \begin{cases}
\frac{\Hat{Q}^{n}_i(t)}{e\cdot \Hat{Q}^{n}(t)}
& \text{if~} e\cdot \Hat{Q}^{n}(t)>0\\
e_I&\text{otherwise,}\end{cases}
\end{equation} 
and for $j\in\cJ$, and $t\ge 0$,
\begin{equation}\label{lb-Usn}
U^{s,n}_j(t) \;\df\; \begin{cases}
\frac{\Hat{Y}^{n}_j(t)}{e\cdot \Hat{Y}^{n}(t)}
& \text{if~} e\cdot \Hat{Y}^{n}(t)>0\\
e_J&\text{otherwise,}\end{cases}
\end{equation}
The process $U^{c,n}_i(t)$ represents the proportion of the total queue length
in the network at queue $i$ at time $t$, while $U^{s,n}_j(t)$ represents the
proportion of the total idle servers in the network at station $j$ at time $t$.
Let $U^{n}\df (U^{c,n}, U^{s,n})$, with
$U^{c,n}\df (U^{c,n}_1,\dotsc, U^{c,n}_I)\transp$,
and $U^{s,n}\df (U^{s,n}_1,\dotsc, U^{s,n}_J)\transp$.
Given $Z^{n} \in \mathfrak{Z}^{n}$ the process $U^{n}$ is uniquely determined
via \eqref{baleq-hat} and \eqref{lb-Ucn}--\eqref{lb-Usn}
and lives in the set
\begin{equation}\label{E-Act}
\Act \;\df\; \bigl\{ u= (u^c, u^s) \in \RR^{I}_+ \times \RR^J_+
\,\colon\, e\cdot u^c = e\cdot u^s=1\bigr\}\,.
\end{equation}

It follows by \eqref{baleq-hat} and \eqref{E-joint}
that, under the JWC condition, we have that for each $t\ge 0$, 
\begin{equation}\label{lb-joint}
\begin{split}
\Hat{Q}^{n}(t)&\;=\; \bigl(e\cdot \Hat{X}^{n}(t)\bigr)^{+}\,U^{c,n}(t)\,,
%\Hat{X}^{n}_i(t)- \sum_{j\in\cJ(i)}\Hat{Z}^{n}_{ij}(t)}{(e\cdot\Hat{X}^{n}(t))^{+}}
\\[5pt]
\Hat{Y}^{n}(t)&\;=\;\bigl(e\cdot \Hat{X}^{n}(t)\bigr)^{-}\,U^{s,n}(t)\,.
%-\sum_{i\in\cI(j)} \Hat{Z}^{n}_{ij}(t)
\end{split}
\end{equation}

\subsection{The limiting controlled diffusion}
\label{sec-diffusionlimit}

Before introducing the limiting diffusion, we define a mapping to be used for the drift representation as in \cite{Atar-05a, Atar-05b}.
For any $\alpha \in \RR^I$ and $\beta \in \RR^J$, let 
\begin{equation*}
D_G \;\df\; \bigl\{ (\alpha, \beta) \in \RR^{I}\times \RR^{J}\;\colon\,
e \cdot \alpha = e \cdot \beta\bigr\}\,, 
\end{equation*}
and 
define a linear map $G:D_G \to \RR^{I\times J}$ such that 
\begin{equation} \label{diff-map}
\begin{split}
\sum_j \psi_{ij}&\;=\;\alpha_i, \quad \forall i \in \cI\,,  \\
\sum_{i}\psi_{ij}&\;=\;\beta_j, \quad \forall j \in \cJ\,,  \\
\psi_{ij} &\;=\;0\,, \quad \forall i \nsim j\,.
\end{split}
\end{equation}
It is shown in Proposition~A.2 of \cite{Atar-05a} that,
provided $\cG$ is a tree,
there exists a unique map $G$ satisfying \eqref{diff-map}. 
We define the matrix 
\begin{equation} \label{Psi}
\Psi \;\df\; (\psi_{ij})_{i \in \cI,\,j \in \cJ}
= G(\alpha, \beta), \quad \text{for} \quad (\alpha, \beta) \in D_G \,. 
\end{equation}

Following the parameterization in Section~\ref{S-cparam}, we define
the action set $\Act$ as in \eqref{E-Act}.
We use $u^c$ and $u^s$ to represent the control variables for \emph{customer}
classes and \emph{server} pools, respectively, throughout the paper. 
For each $x\in \RR^I$ and $u=(u^c, u^s)\in \Act$, define a mapping 
\begin{equation}\label{diff-map2}
\widehat{G}[u](x) \;\df\; G(x- (e\cdot x)^{+} u^c, - (e\cdot x)^{-} u^s)\,. 
\end{equation}

\begin{remark}\label{R-hG}
The function $\widehat{G}[u](x)$ is clearly well defined for $u=(u^c, u^s) =(0,0)$,
in which case we denote it by $\widehat{G}^{0}(x)$. See also Remark~\ref{R-hG-0}.
%\footnote{\color{black}What we are really doing is the following.
%Reorder the nodes so that class $I$ has positive abandonment,
%and let $J$ be any server such that $I\sim J$.
%Let $\Hat{u}^c = (0,\dotsc,0,1)$ similarly for $\Hat{u}^s$.
%By linearity we have:
%$$G(x- (e\cdot x)^{+} \Hat{u}^c, - (e\cdot x)^{-} \Hat{u}^s)\;=\;
%G(x- (e\cdot x)^+ \Hat{u}^c, 0)
%- (e\cdot x)^{-}G(\Hat{u}^c, \Hat{u}^s)\,.$$
%Then note that $G_{IJ}(\Hat{u}^c, \Hat{u}^s)=1$,
%and all other elements of $G(\Hat{u}^c, \Hat{u}^s)$ are zero. 
%{\color{dgreen}It is also straightforward to see that if we write
%$G(x- (e\cdot x) \Hat{u}^c, 0)$,
%as a matrix $Ax$, and reorder the indices, then $A$
%is lower diagonal, and has diagonal entries
%$a_{ii}=-1$ for $i-1,\dotsc,I-1$, and $a_{II}=0$.}
%{\color{black} it is not straightforward. the matrix $G$ is $I\times J$. }

\end{remark}

We quote the following result \cite[Lemma~3]{Atar-05b}.

\begin{lemma}\label{L-Xn}
There exists a constant $c_{0}>0$ such that,
whenever $X^{n}\in\Breve{\sX}^{n}$ which is defined by
\begin{equation}\label{E-BsX}
\Breve{\sX}^{n}\;\df\;
\bigl\{x \in \ZZ^{I}_{+}\;\colon\,  \norm{x - n x^*}\le c_{0} n  \bigr\}\,,
\end{equation}
the following holds:
If $Q^{n}\in\ZZ_{+}^{I}$ and $Y^{n}\in\ZZ_{+}^{J}$ satisfy
$(e\cdot Q^{n})\wedge(e\cdot Y^{n})=0$, then
\begin{equation*}
Z^{n}\;=\; G\bigl(X^{n} - Q^{n},N^{n} - Y^{n}\bigr)
\end{equation*}
satisfies $Z^{n}\in\ZZ_{+}^{I\times J}$ and \eqref{baleq} holds.
\end{lemma}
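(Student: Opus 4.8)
The plan is to read everything off the leaf‑by‑leaf solvability of the linear system \eqref{diff-map} --- available because $\cG$ is a tree --- and to obtain nonnegativity by viewing $Z^{n}$ as a small perturbation of the strictly positive ``fluid'' allocation $nz^{*}$. First, $Z^{n}=G(X^{n}-Q^{n},N^{n}-Y^{n})$ is well defined because $(X^{n}-Q^{n},N^{n}-Y^{n})\in D_{G}$: this is the identity $e\cdot Q^{n}-e\cdot Y^{n}=e\cdot X^{n}-e\cdot N^{n}$ (total customers $=$ total queued $+$ total in service, total servers $=$ total idle $+$ total busy), which together with the hypothesis $(e\cdot Q^{n})\wedge(e\cdot Y^{n})=0$ yields $e\cdot Q^{n}=(e\cdot X^{n}-e\cdot N^{n})^{+}$ and $e\cdot Y^{n}=(e\cdot X^{n}-e\cdot N^{n})^{-}$. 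For integrality and the zero pattern I would use the explicit form of $G$ produced by leaf elimination (Section~\ref{sec-leaf-alg}) applied to \eqref{diff-map} itself (recall $G$ exists and is unique by Proposition~A.2 of \cite{Atar-05a}): one has $\psi_{ij}=0$ for $i\nsim j$ by the last line of \eqref{diff-map}, and for an activity $(i,j)$, summing the first two lines of \eqref{diff-map} over, respectively, the customer classes and the server pools lying on the $i$-side of the cut obtained by deleting the edge $(i,j)$ from the tree gives $\psi_{ij}=\sum_{k}\alpha_{k}-\sum_{l}\beta_{l}$ over that side. Since $\alpha=X^{n}-Q^{n}$ and $\beta=N^{n}-Y^{n}$ are integral, $Z^{n}\in\ZZ^{I\times J}$, and moreover $\babs{\psi_{ij}}\le\norm{\alpha}+\norm{\beta}$.

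For nonnegativity --- the main step --- put $\delta\df\min_{i\sim j}\xi^{*}_{ij}\nu_{j}$, which is strictly positive because complete resource pooling makes every activity basic ($\xi^{*}_{ij}>0$ for $i\sim j$) and $\nu_{j}>0$; hence $nz^{*}_{ij}=n\xi^{*}_{ij}\nu_{j}\ge\delta n$ for every activity. By \eqref{criticalfluid}--\eqref{staticfluid} and $e\cdot x^{*}=e\cdot\nu$ we have $nz^{*}=G(nx^{*},n\nu)$, so by linearity
\[
Z^{n}_{ij}-nz^{*}_{ij}\;=\;\Bigl(G\bigl((X^{n}-nx^{*})-Q^{n},\,(N^{n}-n\nu)-Y^{n}\bigr)\Bigr)_{ij}\,,
\]
the argument again lying in $D_{G}$. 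Using $\babs{\psi_{ij}}\le\norm{\alpha}+\norm{\beta}$, the hypothesis $\norm{X^{n}-nx^{*}}\le c_{0}n$, the nonnegativity of $Q^{n},Y^{n}$, the fact that one of $e\cdot Q^{n},e\cdot Y^{n}$ vanishes, and $e\cdot x^{*}=e\cdot\nu$, I would bound
\[
\babs{Z^{n}_{ij}-nz^{*}_{ij}}\;\le\;\norm{X^{n}-nx^{*}}+\norm{N^{n}-n\nu}+e\cdot Q^{n}+e\cdot Y^{n}\;\le\;2\norm{X^{n}-nx^{*}}+2\norm{N^{n}-n\nu}\;\le\;2c_{0}n+\sorder(n)\,,
\]
where $e\cdot Q^{n}+e\cdot Y^{n}=\babs{e\cdot Q^{n}-e\cdot Y^{n}}=\babs{e\cdot X^{n}-e\cdot N^{n}}\le\norm{X^{n}-nx^{*}}+\norm{N^{n}-n\nu}$ and $\norm{N^{n}-n\nu}=\sorder(n)$ by \eqref{HWpara1}. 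Choosing $c_{0}$ so small that $2c_{0}<\delta$ then gives, for all $n$ large, $Z^{n}_{ij}\ge nz^{*}_{ij}-2c_{0}n-\sorder(n)>0$ for every activity, while $Z^{n}_{ij}=0$ for $i\nsim j$; hence $Z^{n}\in\ZZ^{I\times J}_{+}$. Finally, the two equalities of \eqref{baleq} are precisely the first two lines of \eqref{diff-map} (the sums collapsing to $\cJ(i)$ and $\cI(j)$ since $Z^{n}_{ij}=0$ off $\cE$), and the sign constraints in \eqref{baleq} hold because $X^{n},Q^{n},Y^{n}\ge0$ by hypothesis and $Z^{n}\ge0$ as just shown.

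The only delicate point is the nonnegativity step, and it is intrinsically an ``if $c_{0}$ is small enough'' statement: it rests on (i) the tree structure, which gives an $O(1)$, $n$-independent bound on the entries of $G$; (ii) the identity $(e\cdot Q^{n})\wedge(e\cdot Y^{n})=0$ together with $e\cdot x^{*}=e\cdot\nu$, which forces $e\cdot Q^{n}+e\cdot Y^{n}$ to be dominated by $\norm{X^{n}-nx^{*}}$ up to $\sorder(n)$; and (iii) the strict positivity $\min_{i\sim j}\xi^{*}_{ij}\nu_{j}>0$ from complete resource pooling, which supplies the ``floor'' $\delta n$ that the perturbation $Z^{n}-nz^{*}$ must not breach. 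Comparing that floor with the $O(1)$ norm of $G$ is exactly what dictates the admissible size of $c_{0}$.
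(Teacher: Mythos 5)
Your argument is correct, but note that the paper does not prove Lemma~\ref{L-Xn} at all: it is quoted verbatim from Atar (Lemma~3 of \cite{Atar-05b}), so what you have produced is a self-contained reconstruction rather than an alternative to anything in the text. The reconstruction is sound and is essentially the standard one: the tree-cut formula $\psi_{ij}=\sum_{k}\alpha_{k}-\sum_{l}\beta_{l}$ (summed over the component of $\cG\setminus\{(i,j)\}$ containing $i$) gives integrality, the zero pattern, and the $n$-independent bound $\abs{\psi_{ij}}\le\norm{\alpha}+\norm{\beta}$; linearity of $G$ together with $nz^{*}=G(nx^{*},n\nu)$ reduces nonnegativity to the perturbation estimate $\abs{Z^{n}_{ij}-nz^{*}_{ij}}\le 2c_{0}n+\sorder(n)$; and the floor $nz^{*}_{ij}\ge n\min_{i\sim j}\xi^{*}_{ij}\nu_{j}$, strictly positive by complete resource pooling, absorbs this once $2c_{0}<\min_{i\sim j}\xi^{*}_{ij}\nu_{j}$. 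Two small caveats. First, your justification that $(X^{n}-Q^{n},N^{n}-Y^{n})\in D_{G}$ via ``total customers $=$ queued $+$ in service'' is circular as written: for arbitrary $Q^{n}\in\ZZ^{I}_{+}$, $Y^{n}\in\ZZ^{J}_{+}$ satisfying only the wedge condition, the identity $e\cdot Q^{n}-e\cdot Y^{n}=e\cdot X^{n}-e\cdot N^{n}$ need not hold; it must be taken as an additional hypothesis, which is exactly what the paper points out immediately after the lemma (``the requirement that $(X^{n}-Q^{n},N^{n}-Y^{n})\in D_{G}$ is an implicit assumption in the statement''). Once it is assumed, your deduction $e\cdot Q^{n}=(e\cdot X^{n}-e\cdot N^{n})^{+}$ and the bound $e\cdot Q^{n}+e\cdot Y^{n}\le\norm{X^{n}-nx^{*}}+\norm{N^{n}-n\nu}$ are correct. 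Second, because of the $\sorder(n)$ term coming from $\norm{N^{n}-n\nu}$ (which is in fact $\sorder(\sqrt{n})$ by \eqref{HWpara2}), your conclusion holds for all sufficiently large $n$; this is how the lemma is used (e.g.\ to conclude $\Breve{\sX}^{n}\subset\sX^{n}$ and the EJWC property), so it is harmless, but it is worth saying explicitly.
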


\begin{remark}
It is clear from \eqref{baleq} and \eqref{baleq-hat} that
\begin{align*}
Z^{n}(t) &\;=\; G\bigl(X^{n}(t) - Q^{n}(t), N^{n}- Y^{n}(t)\bigr)\,,\\[5pt]
\Hat{Z}^{n}(t) &\;=\;
G\bigl(\Hat{X}^{n}(t) - \Hat{Q}^{n}(t), - \Hat{Y}^{n}(t)\bigr)\,. 
\end{align*}
Also, by \eqref{lb-joint}, under the JWC condition, we have
$$\Hat{Z}^{n}(t)\;=\;\widehat{G}[U^{n}(t)](\Hat{X}^{n}(t))\,.$$
\end{remark}

Note that the requirement
that $(X^{n} - Q^{n},N^{n} - Y^{n})\in D_{G}$ is an implicit assumption
in the statement of the lemma.
As a consequence of the lemma, $\Breve{\sX}^{n}\subset\sX^{n}$.
Thus, asymptotically as $n \to \infty$,
the JWC condition can be met for all
diffusion scaled system states.
%{\color{black}
%Since joint work conservation holds on the entire space for the
%limiting diffusion, we need to concentrate on those classes of
%admissible policies that have the same property in an asymptotic sense.
%We also introduce a notion of \emph{eventual joint work conservation} (EJWC), which is essential for the convergence and analysis later. 
%\footnote{\color{black}We should be able to say more.
%Unless the sequence is not EJWC, we wouldn't obtain the diffusion
%limit in the first place---check on that.
%So this is not an assumption; it is part of the original model in order
%to obtain
%the limit. It should be moved to the section where we discuss
%the convergence of the diffusion scaled variables.
%Look at the first sentence of section~2.3. {\color{black} Indeed, that sentence needs to be rewritten. Yes, the convergence should also only hold under EJWC. See the first sentence of section 2.5 in \cite{Atar-05b} on page 2618. Also see Proposition 3 and its proof. In the proof of Proposition, for the convergence to diffusion limit, the JWC is not explicitly stated but it is needed. }
%In the old definition `admissible' was JWC, but now it is not.
%This is good place to talk about EJWC.
%We should quote Rami on this and be precise as to what is
%required of the sequence $Z^n$, so that the processes converge
%in distribution to the desired limit. }

\begin{definition}\label{DEJWC}
We say that $Z^{n}\in\fZ^{n}$ is \emph{jointly work conserving} (JWC) in
a domain $D\subset\RR^{I}$ if \eqref{jointwc} holds
 whenever $\Hat{X}^{n}(t)\in D$.
We say that a sequence $\{Z^{n}\in\fZ^{n},\;n\in\NN\}$ is
\emph{eventually jointly work conserving} (EJWC) if there
is an increasing sequence of domains $D_{n}\subset\RR^{I}$, $n\in\NN$,
which cover $\RR^{I}$ and such that each $Z^{n}$ is JWC on $D_{n}$.
We denote the class of all these sequences by $\boldsymbol\fZ$.
By Lemma~\ref{L-Xn} the class $\boldsymbol\fZ$ is nonempty.
\end{definition}

{\color{black}Under the EJWC condition, the convergence in distribution of the diffusion-scaled processes $\hat{X}^n$ to the
limiting diffusion $X$ in \eqref{hatX} can be proved \cite[Proposition~3]{Atar-05b}. } 

The limit process $X$ is an $I$-dimensional diffusion process,
%\footnote{\color{black}We should say that this is the limit
%of $\Hat{X}^{n}$, and in what sense of a limit, before
%refering to it.
%So first Lemma~2.1 then definition 2.1 then summarize 
%the result on taking limits as $n\to\infty$, under the EJWC hypothesis,
%citing Proposition 3 in [6] \color{black} yes, this is better...}
 satisfying
the It\^o equation 
\begin{equation} \label{hatX}
d X_t\;=\;b(X_t, U_t)\,\D{t} + \Sigma \, \D W_t\,, 
\end{equation}
with initial condition $X_0 = x$ and the control $U_t \in \Act$, 
where the drift $b: \RR^I \times \Act \to \RR^I$ takes the form
\begin{equation} \label{diff-drift}
b_i(x,u)\;=\;b_i(x, (u^c, u^s))\;\df\; - \sum_{j \in \cJ(i)}
\mu_{ij} \widehat{G}_{ij}[u](x)
- \gamma_i (e\cdot x)^{+} u^c_i + \ell_i \qquad\forall\,i\in\cI\,,
\end{equation}
and the covariance matrix is given by 
\begin{equation*}
\Sigma\;\df\;\diag\bigl(\sqrt{2 \lambda_1}, \dotsc, \sqrt{2 \lambda_I}\bigr)\,. 
\end{equation*}
Let $\Uadm$ be the set of all admissible controls for the limiting diffusion
(see Section~\ref{sec-general}). 

The limiting processes $Q$, $Y$, and $Z$ satisfy the following:
$Q_i \ge 0$ for $i \in \cI$, $Y_j \ge 0$ for $j \in \cJ$, and 
for all $t\ge 0$, and it holds that
\begin{equation}\label{baleq-diff}
\begin{split}
& X_i(t) \;=\; Q_i(t) + \sum_{j \in \cJ(i)} Z_{ij}(t)
\quad \forall i \in \cI\,,  \\[5pt]
 & Y_j(t) + \sum_{i\in \cI(j)} Z_{ij}(t) \;=\; 0
 \quad \forall j \in \cJ \,.
\end{split}
\end{equation}
Note that these `balance' conditions
imply that joint work conservation always holds at the diffusion limit, i.e.,
\begin{equation}\label{E-joint-diff}
e\cdot Q(t) \;=\; \bigl(e\cdot X(t)\bigr)^{+}\,,
\qquad e \cdot Y(t) \;=\; \bigl(e\cdot X(t)\bigr)^{-} \qquad \forall\,t \ge 0 \,. 
\end{equation}
It is clear then that by \eqref{diff-map} and \eqref{E-joint-diff}, we have
$$Z(t) \;=\; G\bigl(X(t)-Q(t),-Y(t)\bigr)\,.$$

\subsection{The limiting diffusion ergodic control problems}
\label{sec-MM-control-diff}

%\subsubsection{Two ergodic control formulations for limiting diffusions}

We now introduce two formulations of ergodic control problems for the
limiting diffusion. 

(1) \emph{Unconstrained control problem.}
Define the running cost function $r: \RR^I\times \Act \to \RR^I$ by
\begin{equation*}
r(x, u)\;=\;r(x,(u^c, u^s))\,,
\end{equation*}
where  
\begin{equation}\label{E-cost}
r(x, u) \;=\; [(e\cdot x)^{+}]^m \sum_{i=1}^{I} \xi_i (u^c_i)^m
+  [(e\cdot x)^{-}]^m \sum_{j=1}^{J} \zeta_j (u^s_j)^m, \quad m\ge 1\,,
\end{equation}
for some positive vectors $\xi=(\xi_1,\dotsc,\xi_I)\transp$ 
and $\zeta=(\zeta_1,\dotsc,\zeta_J)\transp$. 

The ergodic criterion associated with the controlled diffusion $X$ and the
running cost $r$ is defined as
\begin{equation}\label{diff-cost}
J_{x,U}[r] \;\df\; \limsup_{T \to \infty}\;\frac{1}{T}\;\Exp_x^U
\left[ \int_{0}^{T} r(X_t, U_t)\,\D{t} \right]\,, \quad U \in \Uadm\,. 
\end{equation}
The ergodic cost minimization problem is then defined as
\begin{equation}\label{diff-opt}
\varrho^*(x) \;=\; \inf_{U \in \Uadm} \;J_{x,U}[r] \,.
\end{equation}
The quantity $\varrho^*(x)$ is called the optimal value of
the ergodic control problem 
for the controlled diffusion process $X$ with initial state $x$. 

(2) \emph{Constrained control problem.} 
The  second formulation of the ergodic control problem
 is as follows.
The running cost function $r_{0}(x,u)$ is as defined in
\eqref{E-cost} with $\zeta\equiv 0$.
Also define
\begin{equation}\label{E-rj}
r_j(x,u)\;\df\; [(e\cdot x)^{-}u^s_j]^m\,,\quad j\in\cJ\,,
\end{equation}
and let $\updelta=(\updelta_1,\dotsc,\updelta_J)$ be a positive vector.
The ergodic cost minimization problem under idleness constraints is defined as 
\begin{align} 
\varrho_{c}^{*}(x) &\;=\; \inf_{U \in \Uadm} \;J_{x,U}[r_{0}]\label{diff-opt-c1}
\\[5pt]
\text{subject to}\quad
J_{x,U}[r_{j}] &\;\le\; \updelta_j\,,\quad j \in \cJ\,.\label{diff-opt-c2} 
\end{align}
The constraint in \eqref{diff-opt-c2} can be written as
\begin{equation*}
\lim_{T\to\infty}\;\frac{1}{T}\;\Exp_x^U
\Biggl[\int_{0}^{T} \Biggl(-\sum_{i \in \cI(j)}
\widehat{G}_{ij}[U](X_t) \Biggr)^m\,\D{t} \Biggr] \;\le\; \updelta_j\,,
\quad j \in \cJ\,.
\end{equation*}

As we show in Section~\ref{S3}, the optimal values $\varrho^*(x)$ and
$\varrho^*_c(x)$ do not depend on $x\in\RR^{I}$,
and thus we remove their dependence on $x$ in the statements below.
We  prove the well-posedness of these ergodic diffusion control problems,
and characterize their optimal solutions in Sections~\ref{sec-stabilizability}
and \ref{sec-HJB}.

\section{Ergodic Control of a Broad Class of Controlled Diffusions}
\label{S3}

We review the model and the structural properties of a broad class of
controlled diffusions for which the ergodic control problem is
well posed \cite{ABP14}.
We augment the results in \cite{ABP14} with the study of ergodic control
under constraints.

\subsection{The model} \label{sec-general}
Consider a controlled diffusion process $X = \{X_{t},\;t\ge0\}$
taking values in the $d$-dimensional Euclidean space $\RR^{d}$, and
governed by the It\^o stochastic differential equation
\begin{equation}\label{E-sde}
\D{X}_{t} \;=\;b(X_{t},U_{t})\,\D{t} + \upsigma(X_{t})\,\D{W}_{t}\,.
\end{equation}
All random processes in \eqref{E-sde} live in a complete
probability space $(\Omega,\sF,\Prob)$.
The process $W$ is a $d$-dimensional standard Wiener process independent
of the initial condition $X_{0}$.
The control process $U$ takes values in a compact, metrizable set $\Act$, and
$U_{t}(\omega)$ is jointly measurable in
$(t,\omega)\in[0,\infty)\times\Omega$.
Moreover, it is \emph{non-anticipative}:
for $s < t$, $W_{t} - W_{s}$ is independent of
\begin{equation*}
\sF_{s} \;\df\;\text{the completion of~} \sigma\{X_{0},U_{r},W_{r},\;r\le s\}
\text{~relative to~}(\sF,\Prob)\,.
\end{equation*}
Such a process $U$ is called an \emph{admissible control}.
Let $\Uadm$ denote the set of all admissible controls.

We impose the following standard assumptions on the drift $b$
and the diffusion matrix $\upsigma$
to guarantee existence and uniqueness of solutions to equation \eqref{E-sde}.
\begin{itemize}
\item[{(A1)}]
\emph{Local Lipschitz continuity:\/}
The functions
\begin{equation*}
b\;=\;\bigl[b_{1},\dotsc,b_{d}\bigr]\transp\,\colon\,\RR^{d}\times\Act\to\RR^{d}\,,
\quad\text{and}\quad
\upsigma\;=\;\bigl[\upsigma_{ij}\bigr]\,\colon\,\RR^{d}\to\RR^{d\times d}
\end{equation*}
are locally Lipschitz in $x$ with a Lipschitz constant $C_{R}>0$ depending on
$R>0$.
In other words,
for all $x,y\in B_{R}$ and $u\in\Act$,
\begin{equation*}
\abs{b(x,u) - b(y,u)} + \norm{\upsigma(x) - \upsigma(y)}
\;\le\;C_{R}\,\abs{x-y}\,.
\end{equation*}
We also assume that $b$ is continuous in $(x,u)$.
\smallskip
\item[{(A2)}]
\emph{Affine growth condition:\/}
$b$ and $\upsigma$ satisfy a global growth condition of the form
\begin{equation*}
\abs{b(x,u)}^{2}+ \norm{\upsigma(x)}^{2}\;\le\;C_{1}
\bigl(1 + \abs{x}^{2}\bigr) \qquad \forall (x,u)\in\RR^{d}\times\Act\,,
\end{equation*}
where $\norm{\upsigma}^{2}\;\df\;
\mathrm{trace}\left(\upsigma\upsigma\transp\right)$.
\smallskip
\item[{(A3)}]
\emph{Local nondegeneracy:\/}
For each $R>0$, it holds that
\begin{equation*}
\sum_{i,j=1}^{d} a_{ij}(x)\xi_{i}\xi_{j}
\;\ge\;C^{-1}_{R}\abs{\xi}^{2} \qquad\forall x\in B_{R}\,,
\end{equation*}
for all $\xi=(\xi_{1},\dotsc,\xi_{d})\transp\in\RR^{d}$,
where $a\df \upsigma \upsigma\transp$.
\end{itemize}

In integral form, \eqref{E-sde} is written as
\begin{equation}\label{E3.2}
X_{t} \;=\;X_{0} + \int_{0}^{t} b(X_{s},U_{s})\,\D{s}
+ \int_{0}^{t} \upsigma(X_{s})\,\D{W}_{s}\,.
\end{equation}
The third term on the right hand side of \eqref{E3.2} is an It\^o
stochastic integral.
We say that a process $X=\{X_{t}(\omega)\}$ is a solution of \eqref{E-sde},
if it is $\sF_{t}$-adapted, continuous in $t$, defined for all
$\omega\in\Omega$ and $t\in[0,\infty)$, and satisfies \eqref{E3.2} for
all $t\in[0,\infty)$ a.s.
It is well known that under (A1)--(A3), for any admissible control
there exists a unique solution of \eqref{E-sde}
\cite[Theorem~2.2.4]{book}.

The \emph{controlled extended generator} $\Lg^{u}$ of
the diffusion is defined by
$\Lg^{u}\colon\Cc^{2}(\RR^{d})\to\Cc(\RR^{d})$,
where $u\in\Act$ plays the role of a parameter, by
\begin{equation}\label{E3.3}
\Lg^{u} f(x) \;\df\;\frac{1}{2} \sum_{i,j=1}^{d} a_{ij}(x)\,\partial_{ij} f(x)
+ \sum_{i=1}^{d} b_{i}(x,u)\, \partial_{i} f(x)\,,\quad u\in\Act\,.
\end{equation}
We adopt
the notation $\partial_{i}\df\tfrac{\partial~}{\partial{x}_{i}}$ and
$\partial_{ij}\df\tfrac{\partial^{2}~}{\partial{x}_{i}\partial{x}_{j}}$.
%We also use the standard summation rule that
%repeated subscripts and superscripts are summed from $1$ through $d$.
%In other words, the right hand side of \eqref{E3.3} stands for
%\begin{equation*}
%\frac{1}{2}\,\sum_{i,j=1}^{d}
%a_{ij}(x)\,\frac{\partial^{2}f}{\partial{x}_{i}\partial{x}_{j}}(x)
%+\sum_{i=1}^{d} b_{i}(x,u)\, \frac{\partial f }{\partial{x}_{i}}(x)\,.
%\end{equation*}

Of fundamental importance in the study of functionals of $X$ is
It\^o's formula.
For $f\in\Cc^{2}(\RR^{d})$ and with $\Lg^{u}$ as defined in \eqref{E3.3},
it holds that
\begin{equation}\label{E3.4}
f(X_{t}) \;=\;f(X_{0}) + \int_{0}^{t}\Lg^{U_{s}} f(X_{s})\,\D{s}
+ M_{t}\,,\quad\text{a.s.},
\end{equation}
where
\begin{equation*}
M_{t} \;\df\;\int_{0}^{t}\bigl\langle\nabla f(X_{s}),
\upsigma(X_{s})\,\D{W}_{s}\bigr\rangle
\end{equation*}
is a local martingale.
Krylov's extension of It\^o's formula \cite[p.~122]{krylov}
extends \eqref{E3.4} to functions $f$ in the local
Sobolev space $\Sobl^{2,p}(\RR^{d})$, $p\ge d$.

Recall that a control is called \emph{Markov} if
$U_{t} = v(t,X_{t})$ for a measurable map $v\colon\RR_{+}\times\RR^{d}\to \Act$,
and it is called \emph{stationary Markov} if $v$ does not depend on
$t$, i.e., $v\colon\RR^{d}\to \Act$.
Correspondingly \eqref{E-sde}
is said to have a \emph{strong solution}
if given a Wiener process $(W_{t},\sF_{t})$
on a complete probability space $(\Omega,\sF,\Prob)$, there
exists a process $X$ on $(\Omega,\sF,\Prob)$, with $X_{0}=x_{0}\in\RR^{d}$,
which is continuous,
$\sF_{t}$-adapted, and satisfies \eqref{E3.2} for all $t$ a.s.
A strong solution is called \emph{unique},
if any two such solutions $X$ and $X'$ agree
$\Prob$-a.s., when viewed as elements of $\Cc\bigl([0,\infty),\RR^{d}\bigr)$.
It is well known that under Assumptions (A1)--(A3),
for any Markov control $v$,
\eqref{E-sde} has a unique strong solution \cite{Gyongy-96}.

Let $\Usm$ denote the set of stationary Markov controls.
Under $v\in\Usm$, the process $X$ is strong Markov,
and we denote its transition function by $P^{t}_{v}(x,\cdot\,)$.
It also follows from the work of \cite{Bogachev-01,Stannat-99} that under
$v\in\Usm$, the transition probabilities of $X$
have densities which are locally H\"older continuous.
Thus $\Lg^{v}$ defined by
\begin{equation*}
\Lg^{v} f(x) \;\df\;\frac{1}{2}\sum_{i,j=1}^{d} a_{ij}(x)\,\partial_{ij} f(x)
+\sum_{i=1}^{d} b_{i} \bigl(x,v(x)\bigr)\,\partial_{i} f(x)\,,\quad v\in\Usm\,,
\end{equation*}
for $f\in\Cc^{2}(\RR^{d})$,
is the generator of a strongly-continuous
semigroup on $\Cc_{b}(\RR^{d})$, which is strong Feller.
We let $\Prob_{x}^{v}$ denote the probability measure and
$\Exp^{v}_{x}$ the expectation operator on the canonical space of the
process under the control $v\in\Usm$, conditioned on the
process $X$ starting from $x\in\RR^{d}$ at $t=0$.

Recall that control $v\in\Usm$ is called \emph{stable}
if the associated diffusion is positive recurrent.
We denote the set of such controls by $\Ussm$,
and let $\mu_{v}$ denote the unique invariant probability
measure on $\RR^{d}$ for the diffusion under the control $v\in\Ussm$.
We also let $\cM\df\{\mu_{v}\,\colon\,v\in\Ussm\}$, and
$\eom$ denote the set of ergodic occupation measures corresponding to controls
in $\Ussm$, that is, 
\begin{equation*}\eom\;\df\;\biggl\{\uppi\in\cP(\RR^{d}\times\Act)\,\colon\,
\int_{\RR^{d}\times\Act}\Lg^{u} f(x)\,\uppi(\D{x},\D{u})=0\quad
\forall\,f\in\Cc^{\infty}_c(\RR^{d}) \biggr\}\,,\end{equation*}
where $\Lg^{u}f(x)$ is given by \eqref{E3.3}.

We need the following definition:

\begin{definition}
A function $h\colon\RR^{d}\times\Act\to\RR$ is called \emph{inf-compact}
on a set $A\subset \RR^{d}$ if the set
$\Bar{A}\cap\bigl\{x\,\colon\,\min_{u\in\Act}\;h(x,u)\le c\bigr\}$
is compact (or empty) in $\RR^{d}$ for all $c\in\RR$.
When this property
holds for $A\equiv \Rd$, then we simply say that $h$ is inf-compact.
\end{definition}

Recall that $v\in\Ussm$ if and only if there exists an inf-compact function
$\Lyap\in\Cc^{2}(\RR^{d})$, a bounded domain $D\subset\RR^{d}$, and
a constant $\varepsilon>0$ satisfying
\begin{equation*}
\Lg^{v}\Lyap(x) \;\le\;-\varepsilon
\qquad\forall x\in D^{c}\,.
\end{equation*}
We denote by $\uptau(A)$ the \emph{first exit time} of a process
$\{X_{t}\,,\;t\in\RR_{+}\}$ from a set $A\subset\RR^{d}$, defined by
\begin{equation*}
\uptau(A) \;\df\;\inf\;\{t>0\,\colon\,X_{t}\not\in A\}\,.
\end{equation*}
The open ball of radius $R$ in $\RR^{d}$, centered at the origin,
is denoted by $B_{R}$, and we let $\uptau_{R}\;\df\;\uptau(B_{R})$,
and $\tc_{R}\df \uptau(B^{c}_{R})$.

We assume that the running cost function $r(x,u)$
is nonnegative, continuous and locally Lipschitz
in its first argument uniformly in $u\in\Act$.
Without loss of generality we let $C_{R}$
be a Lipschitz constant of $r(\,\cdot\,,u)$ over $B_{R}$.
In summary, we assume that
\begin{itemize}
\item[{(A4)}]
$r\colon\RR^{d}\times\Act\to\RR_{+}$ is continuous and satisfies,
for some constant $C_{R}>0$
\begin{equation*}
\babs{r(x,u)-r(y,u)} \;\le\;C_{R}\,\abs{x-y}
\qquad\forall x,y\in B_{R}\,,~\forall u\in\Act\,,
\end{equation*}
and all $R>0$.
\end{itemize}

In general, $\Act$ may not be a convex set.
It is therefore often useful to enlarge the control set to $\cP(\Act)$.
For any $v(\D{u})\in\cP(\Act)$ we can redefine the drift and the 
running cost as
\begin{equation}\label{relax}
\Bar{b}(x,v)\;\df\;\int_\Act b(x,u)v(\D{u})\,,\quad \text{and}\quad
\Bar{r}(x,v)\;\df\;\int_\Act r(x,u)v(\D{u})\,.
\end{equation}
It is easy to see that the drift and running cost
defined in \eqref{relax} satisfy all the
aforementioned conditions {(A1)}--{(A4)}.
In what follows we assume that
all the controls take values in $\cP(\Act)$.
These controls are generally referred to as \emph{relaxed} controls,
while a control taking values in $\Act$ is called \emph{precise}.
We endow the set of relaxed stationary Markov controls with the following
topology: $v_{n}\to v$ in $\Usm$ if and only if
\begin{equation*}
\int_{\RR^{d}}f(x)\int_\Act g(x,u)v_{n}(\D{u}\mid x)\,\D{x}
\;\xrightarrow[n\to\infty]{}\;
\int_{\RR^{d}}f(x)\int_\Act g(x,u)v(\D{u}\mid x)\,\D{x}
\end{equation*}
for all $f\in L^{1}(\RR^{d})\cap L^{2}(\RR^{d})$
and $g\in\Cc_{b}(\RR^{d}\times \Act)$.
Then $\Usm$ is a compact metric space under this topology
\cite[Section~2.4]{book}.
We refer to this topology as the \emph{topology of Markov controls}.
A control is said to be \emph{precise} if it takes value in $\Act$.
It is easy to see that any precise control $U_{t}$ can also be understood
as a relaxed control by $U_{t}(\D{u})=\delta_{U_{t}}$.
Abusing the notation we denote the drift and running cost by $b$ and $r$,
respectively, and the action of a relaxed control
on them is understood as in \eqref{relax}.
In this manner, the
definition of $J_{x,U}[r]$ in
\eqref{diff-cost}, is naturally extended to relaxed $U\in\Uadm$ and $x\in \RR^d$.
For $v\in\Ussm$, the functional $J_{x,v}[r]$ does not depend on $x\in \RR^d$.
In this case we drop the dependence on $x$ and
denote this by $J_{v}[r]$.
Note that if $\uppi_{v}(\D{x},\D{u})\df\mu_{v}(\D{x})\,v(\D{u}\mid x)$
is the ergodic occupation measure corresponding to $v\in\Ussm$,
then we have
\begin{equation*}
J_{v}[r] \;=\; \int_{\Rd\times\Act} r(x,u)\,\uppi_{v}(\D{x},\D{u})\,.
\end{equation*}
Therefore, the restriction of the ergodic control problem in
\eqref{diff-opt} to stable stationary Markov controls is equivalent
to minimizing
\begin{equation*}
\uppi(r)\;=\;\int_{\Rd\times\Act} r(x,u)\,\uppi(\D{x},\D{u})
\end{equation*}
over all $\uppi\in\eom$.
If the infimum is attained in $\eom$, then we say that the ergodic control
problem is well posed, and we refer to
any $\Bar\uppi\in\eom$ that attains this infimum
as an \emph{optimal ergodic occupation measure}.

\subsection{Hypotheses and review
of some results from \cite{ABP14}} \label{sec-structural}

A structural hypothesis was introduced in \cite{ABP14}
to study ergodic control
for a broad class of controlled diffusion models. 
This is as follows:

\begin{hypothesis}\label{HypA}
For some open set $\cK\subset\RR^{d}$,
the following hold:
\begin{itemize}
\item[(i)]
The running cost $r$ is inf-compact on $\cK$.
\smallskip
\item[(ii)]
There exist inf-compact functions $\Lyap\in\Cc^{2}(\RR^{d})$
and  $h \in \Cc(\RR^{d}\times\Act)$, such that
\begin{equation*}
\begin{split}
\Lg^{u}\Lyap(x) &\;\le\;1 - h(x,u)\qquad
\forall\,(x,u)\in \cK^{c}\times\Act\,,\\[5pt]
\Lg^{u}\Lyap(x) &\;\le\;1 + r(x,u)
\qquad\forall\,(x,u)\in\cK\times\Act\,.
\end{split}
\end{equation*}
\end{itemize}
Without loss of generality, we assume that $\Lyap$ and $h$ are nonnegative. 
\end{hypothesis}

In Hypothesis~\ref{HypA},
for notational economy, and without loss of generality, we refrain from using
any constants.
Observe that for $\cK=\RR^{d}$ the problem reduces to an
ergodic control problem with inf-compact
cost, and for $\cK=\varnothing$ we obtain an ergodic control problem for a
uniformly stable controlled diffusion.
As shown in \cite{ABP14}, Hypothesis~\ref{HypA} implies that
\begin{equation*}
J_{x,U}\bigl[h\,\Ind_{\cK^{c}\times\Act}\bigr]
\;\le\;J_{x,U}\bigl[r\,\Ind_{\cK\times\Act}\bigr]\qquad\forall\,U\in\Uadm\,.
\end{equation*} 

The hypothesis that follows is necessary for the value of the ergodic control
problem to be finite.
It is a standard assumption in ergodic control.

\begin{hypothesis}\label{HypB}
There exists $\Hat{U}\in\Uadm$ such that $J_{x,\Hat{U}}[r]<\infty$
for some $x\in\RR^{d}$.
\end{hypothesis}

It is shown in \cite{ABP14} that under Hypotheses~\ref{HypA} and \ref{HypB}
the ergodic control problem in \eqref{diff-cost}--\eqref{diff-opt} is
well posed.
The following result which is contained in Lemma~3.3 and Theorem~3.1
of \cite{ABP14}
plays a key role in the analysis of the problem.
Let
\begin{equation*}
\cH\;\df\; (\cK\times\Act) \;\textstyle{\bigcup}\;
\bigl\{(x,u)\in\RR^{d}\times\Act\,\colon\,
r(x,u)> h(x,u)\bigr\}\,,
\end{equation*}
where $\cK$ is the open set in Hypothesis~\ref{HypA}.

\begin{lemma}\label{L3.1}
Under Hypothesis~\ref{HypA}, the following are true.
\begin{itemize}
\item[(a)]
There exists an inf-compact function
$\Tilde{h}\in\Cc(\RR^{d}\times\Act)$
which is locally Lipschitz in its first argument uniformly w.r.t. its
second argument, and satisfies
\begin{equation}\label{E-Lkey}
r(x,u) \;\le\;\Tilde{h}(x,u) \;\le\;\frac{k_{0}}{2}\,
\bigl(1+h(x,u)\,\Ind_{\cH^{c}}(x,u)+r(x,u)\,\Ind_{\cH}(x,u)\bigr)
\end{equation}
for all $(x,u)\in\RR^{d}\times\Act$,
and for some positive constant $k_{0}\ge2$.
\smallskip
\item[(b)]
The function $\Lyap$ in Hypothesis~\ref{HypA} satisfies
\begin{equation*}
\Lg^{u}\Lyap(x) \;\le\;1 - h(x,u)\,\Ind_{\cH^{c}}(x,u)+r(x,u)\,\Ind_{\cH}(x,u)
\qquad \forall (x,u)\in\RR^{d}\times\Act\,.
\end{equation*}
\item[(c)]
It holds that
\begin{equation}\label{E-key}
J_{x,U}\bigl[\Tilde{h}\bigr]
\;\le\;k_{0}\bigl(1+J_{x,U}[r]\bigr)\qquad\forall\,U\in\Uadm\,.
\end{equation}
\end{itemize}
\end{lemma}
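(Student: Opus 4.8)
The plan is to prove the three assertions in the order (b), (a), (c), since both the construction in (a) and the estimate in (c) use (b). Part~(b) I would get from a direct case analysis against the definition
$\cH^{c}=(\cK^{c}\times\Act)\cap\{(x,u)\colon r(x,u)\le h(x,u)\}$. If $(x,u)\in\cH^{c}$, then $x\in\cK^{c}$ and $r\le h$ there, so the first inequality in Hypothesis~\ref{HypA}(ii) reads exactly $\Lg^{u}\Lyap(x)\le 1-h(x,u)$, the claimed bound. If $x\in\cK$, then $(x,u)\in\cH$ and $\Ind_{\cH^{c}}(x,u)=0$, and the second inequality of Hypothesis~\ref{HypA}(ii) is precisely $\Lg^{u}\Lyap(x)\le 1+r(x,u)$. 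Finally, if $x\in\cK^{c}$ but $r(x,u)>h(x,u)$ (hence $(x,u)\in\cH$), then the first inequality together with $h\ge0$ and $r\ge0$ gives $\Lg^{u}\Lyap(x)\le 1-h(x,u)\le 1+r(x,u)$. These cases are exhaustive.

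For part~(a), set $g(x,u)\df h(x,u)\Ind_{\cH^{c}}(x,u)+r(x,u)\Ind_{\cH}(x,u)$, which is the function on the right of (b). Reading off the definition of $\cH$ one verifies that $r\le g$ everywhere, that $g=r$ on $\cK\times\Act$, and that $g=\max(h,r)$ on $\cK^{c}\times\Act$; in particular $g\ge h$ on $\cK^{c}\times\Act$. Consequently $\psi(x)\df\min_{u\in\Act}g(x,u)$ has relatively compact sublevel sets: on $\cK$ by inf-compactness of $r$ there, and on $\cK^{c}$ because $\psi(x)\ge\min_{u}h(x,u)$ with $h$ inf-compact. I would then choose $R_{n}\uparrow\infty$ with $\{\psi\le n\}\subset B_{R_{n}}$ and let $\beta\ge0$ be the continuous, locally Lipschitz, inf-compact function of $\abs{x}$ alone that vanishes on $B_{R_{0}}$ and interpolates linearly in $\abs{x}$ from the value $n$ on $\partial B_{R_{n}}$ to $n+1$ on $\partial B_{R_{n+1}}$; on the annulus $R_{n}\le\abs{x}<R_{n+1}$ one has $\psi(x)>n$ while $\beta(x)\le n+1$, so $\beta(x)\le 1+\psi(x)$. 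Setting $\Tilde{h}(x,u)\df\max\bigl(r(x,u),\beta(x)\bigr)$ and taking $k_{0}=2$, the function $\Tilde h$ is continuous, locally Lipschitz in $x$ uniformly in $u$ (a maximum of two such functions), inf-compact since it dominates $\beta$, and $r\le\Tilde h$; moreover $r\le g\le\tfrac{k_{0}}{2}(1+g)$ and $\beta(x)\le\tfrac{k_{0}}{2}(1+\psi(x))\le\tfrac{k_{0}}{2}(1+g(x,u))$, which together yield $\Tilde h\le\tfrac{k_{0}}{2}(1+g)$, i.e.\ \eqref{E-Lkey}.

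Part~(c) I would deduce from (a) and (b). If $J_{x,U}[r]=\infty$ there is nothing to prove, so assume $J_{x,U}[r]<\infty$, which forces $\Exp_{x}^{U}\bigl[\int_{0}^{T}r(X_{s},U_{s})\,\D{s}\bigr]<\infty$ for every $T$. Applying It\^o's formula \eqref{E3.4} to $\Lyap$ up to $T\wedge\uptau_{R}$, using $\Lyap\ge0$ and the bound of (b), and letting $R\to\infty$ via nonexplosiveness and monotone convergence, one obtains
\begin{equation*}
\Exp_{x}^{U}\Bigl[\int_{0}^{T}h(X_{s},U_{s})\,\Ind_{\cH^{c}}(X_{s},U_{s})\,\D{s}\Bigr]
\;\le\;\Lyap(x)+T+\Exp_{x}^{U}\Bigl[\int_{0}^{T}r(X_{s},U_{s})\,\D{s}\Bigr]\,.
\end{equation*}
Dividing by $T$ and letting $T\to\infty$ gives $J_{x,U}\bigl[h\,\Ind_{\cH^{c}}\bigr]\le 1+J_{x,U}[r]$. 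Combining this with \eqref{E-Lkey}, the subadditivity of $\limsup$, $J_{x,U}[1]=1$, and $J_{x,U}[r\,\Ind_{\cH}]\le J_{x,U}[r]$ yields
\begin{equation*}
J_{x,U}[\Tilde h]\;\le\;\tfrac{k_{0}}{2}\bigl(1+J_{x,U}[h\,\Ind_{\cH^{c}}]+J_{x,U}[r\,\Ind_{\cH}]\bigr)
\;\le\;\tfrac{k_{0}}{2}\bigl(2+2J_{x,U}[r]\bigr)\;=\;k_{0}\bigl(1+J_{x,U}[r]\bigr)\,,
\end{equation*}
which is \eqref{E-key}.

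The step I expect to be the main obstacle is the construction in part~(a): one must exhibit a \emph{single} $\Tilde h$ that is simultaneously inf-compact, continuous and locally Lipschitz in $x$ uniformly in $u$, and trapped between $r$ and $\tfrac{k_{0}}{2}(1+g)$, while $g$ itself is only piecewise continuous, with a possible jump across $\partial\cK$. The resolution is the observation that $g$ equals $r$ on $\cK$ and $\max(h,r)$ on $\cK^{c}$, so that $\psi=\min_{u}g$ still has compact sublevel sets; the radial interpolation defining $\beta$ then sidesteps any irregularity of $\psi$ while keeping $\beta$ below $\tfrac{k_{0}}{2}(1+g)$, and taking the maximum with $r$ restores the required lower bound. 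Parts (b) and (c) are then routine, the only technical care in (c) being the standard localization needed to justify Dynkin's formula for the possibly unbounded $\Lyap$.
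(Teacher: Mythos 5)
Your proof is correct. The paper gives no proof of this lemma---it is quoted directly from Lemma~3.3 and Theorem~3.1 of \cite{ABP14}---so there is nothing internal to compare against; your case analysis for part~(b), the radial-interpolation construction of $\beta$ and the choice $\Tilde{h}=\max(r,\beta)$ with $k_{0}=2$ for part~(a), and the localized Dynkin argument giving $J_{x,U}\bigl[h\,\Ind_{\cH^{c}}\bigr]\le 1+J_{x,U}[r]$ for part~(c) together form a complete, self-contained derivation consistent with how the cited result is used in the rest of the paper.
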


Hypothesis~\ref{HypB} together with \eqref{E-key} imply that
$J_{x,\Hat{U}}\bigl[\Tilde{h}\bigr]<\infty$.
This together with the fact that $\Tilde{h}$
is inf-compact and dominates $r$ is used in \cite{ABP14}
to prove that the ergodic control problem is well posed.
Also, there exists a constant $\varrho^{*}$ such that
\begin{equation}\label{E-average}
\varrho^{*} \;=\; \inf_{U\in\Uadm}\;\limsup_{T \to \infty}\;\frac{1}{T}\;\Exp_x^U
\left[ \int_{0}^{T} r(X_t, U_t)\,\D{t} \right]\,, \quad \forall\, x\in\Rd\,. 
\end{equation}
Moreover, the infimum in \eqref{E-average} is attained at a precise
stationary Markov control, and the set of
optimal stationary Markov controls is characterized
via a HJB equation that has a unique solution in a certain class
of functions \cite[Theorems~3.4 and 3.5]{ABP14}.

Another important result in \cite{ABP14} is an approximation
technique which plays a crucial role in
the proof of asymptotic optimality (as $n\to\infty$)
of the Markov control obtained from the HJB
for the ergodic control problem of the multiclass single-pool queueing systems.
In summary this can be described as follows.
We truncate the data of the problem by fixing the control
outside a ball in $\Rd$.
The control is chosen in a manner that the set of ergodic occupation
measures for the truncated problem is compact.
We have shown that as the radius of the ball tends to infinity,
 the optimal value of the truncated problem converges to
the optimal value of the original problem.

The precise definition of the `truncated' model is as follows.

\begin{definition}\label{D-truncation}
Let $v_{0}\in\Ussm$ be any control such that $\uppi_{v_{0}}(r)<\infty$.
We fix the control $v_{0}$ on the complement of the ball $\Bar{B}_{R}$
and leave the parameter $u$ free inside.
In other words, for each $R\in\NN$ we define
\begin{align}
b^{R}(x,u) &\;\df\;\begin{cases} b(x,u)&\text{if~~}
(x,u)\in \Bar{B}_{R}\times\Act\,,\\[2pt]
b(x,v_{0}(x))& \text{otherwise,}\end{cases} \label{bR}\\[5pt]
r^{R}(x,u) &\;\df\;\begin{cases} r(x,u)&\text{if~~}
(x,u)\in \Bar{B}_{R}\times\Act\,,\\[2pt]
r(x,v_{0}(x))&\text{otherwise.}\end{cases} \label{rR}
\end{align}
Consider the ergodic control problem
for the family of controlled diffusions, parameterized by $R\in\NN$,
given by
\begin{equation}\label{E-sdeR}
\D{X}_{t} \;=\;b^{R}(X_{t},U_{t})\,\D{t} + \upsigma(X_{t})\,\D{W}_{t}\,,
\end{equation}
with associated running costs $r^{R}(x,u)$.
We denote by $\Usm(R,v_{0})$ the subset of $\Usm$ consisting of those controls
$v$ which agree with $v_{0}$ on $\Bar{B}_{R}^{c}$,
and by $\eom(R)$ we denote the set of ergodic occupation
measures of \eqref{E-sdeR}.
\end{definition}

Let $\eta_{0}\df \uppi_{v_{0}}(\Tilde{h})$.
By \eqref{E-key}, $\eta_{0}$ is finite.
Let $\varphi_{0}\in\Sobl^{2,p}(\RR^{d})$, for any $p>d$, be the
minimal nonnegative solution
to the Poisson equation
(see \cite[Lemma~3.7.8\,(ii)]{book})
\begin{equation}\label{E-vphi0}
\Lg^{v_{0}}\varphi_{0}(x) \;=\; \eta_{0} - \Tilde{h}(x,v_{0}(x))\,\quad
x\in\RR^{d}\,.
\end{equation}
Under Hypotheses~\ref{HypA} and \ref{HypB}, all the conclusions of
Theorems~4.1 and 4.2 in \cite{ABP14} hold.
Consequently, we have the following lemma.

\begin{lemma}\label{L3.2}
Under Hypotheses~\ref{HypA} and \ref{HypB}, the following hold.
\begin{enumerate}
\item[(i)]
The set $\eom(R)$ is compact for each $R>0$, and thus
the set of optimal ergodic occupation measures for $r^R$ in $\eom(R)$, denoted as
$\Bar\eom(R)$, is nonempty.
\smallskip
\item[(ii)]
The collection $\cup_{R>0}\,\Bar\eom(R)$ is tight in $\cP(\Rd\times\Act)$.
\end{enumerate}
Moreover, provided
$\varphi_{0}\in\order\bigl(\min_{u\in\Act}\;\Tilde{h}(\cdot\,,u)\bigr)$,
 for any collection
$\{\Bar\uppi^{R}\in\Bar\eom(R)\;\colon\,R>0\}$, we have
\begin{itemize}
\item[(iii)]
Any limit point of $\Bar\uppi^{R}$ as $R\to\infty$
is an optimal ergodic occupation
measure of \eqref{E-sde} for $r$.
\smallskip
\item[(iv)]
It holds that $\lim_{R\nearrow\infty}\; \Bar\uppi^{R}\bigl(r^{R}\bigr)
= \varrho^{*}$.
\end{itemize}
\end{lemma}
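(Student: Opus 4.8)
The statement is essentially a restatement, in the present level of generality, of the truncation results \cite[Theorems~4.1 and~4.2]{ABP14}, whose proofs use nothing beyond the structural content of Hypotheses~\ref{HypA}--\ref{HypB} and of Lemma~\ref{L3.1}; so the plan is to verify that this content survives the truncation of Definition~\ref{D-truncation} and then invoke those theorems. First I would apply Lemma~\ref{L3.1} to obtain the inf-compact dominating function $\Tilde h\ge r$, the refined drift inequality of Lemma~\ref{L3.1}(b), and \eqref{E-key}. I would then observe that the truncated data $(b^{R},r^{R})$ again satisfy Hypothesis~\ref{HypA}, with the same open set $\cK$ and with $h,r$ replaced by their truncations along $v_{0}$: on $\Bar{B}_{R}\times\Act$ nothing changes, while on $\Bar{B}_{R}^{c}\times\Act$ the generator equals $\Lg^{v_{0}}$, the cost equals $r(\cdot\,,v_{0})$, and the pointwise inequalities of Hypothesis~\ref{HypA} hold there with $u$ set to $v_{0}(x)$. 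Writing $\Tilde h^{R}$ for the truncation of $\Tilde h$ along $v_{0}$ (so $\Tilde h^{R}=\Tilde h$ on $\Bar{B}_{R}\times\Act$ and $\Tilde h^{R}=\Tilde h(\cdot\,,v_{0})$ off it), one checks, exactly as in the proof of Lemma~\ref{L3.1}, that $\Tilde h^{R}$ is inf-compact, dominates $r^{R}$ and $\min_{u\in\Act}\Tilde h(\cdot\,,u)$, and satisfies $J_{x,U}[\Tilde h^{R}]\le k_{0}\bigl(1+J_{x,U}[r^{R}]\bigr)$ for all $U\in\Uadm$. Finally, $v_{0}\in\Usm(R,v_{0})$ with $\uppi_{v_{0}}(r)<\infty$ shows Hypothesis~\ref{HypB} holds for the truncated problem, uniformly in $R$.

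For part~(i), fix $R$. The Poisson solution $\varphi_{0}$ of \eqref{E-vphi0} satisfies $\Lg^{v_{0}}\varphi_{0}=\eta_{0}-\Tilde h(\cdot\,,v_{0})$ on $\RR^{d}$, and inf-compactness of $\Tilde h$ makes the right side tend to $-\infty$ at infinity. Since every $v\in\Usm(R,v_{0})$ agrees with $v_{0}$ off $\Bar{B}_{R}$, after the usual interior modification of $\varphi_{0}$ to a $\Cc^{2}$ function one obtains a common Lyapunov function satisfying $\Lg^{v}(\,\cdot\,)\le C(R)-\min_{u\in\Act}\Tilde h(\cdot\,,u)$ pointwise for all such $v$. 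By the standard criterion (and the attendant moment bound) this shows every $v\in\Usm(R,v_{0})$ is stable---so $\eom(R)=\{\uppi_{v}\colon v\in\Usm(R,v_{0})\}$ is nonempty---and that $\uppi\bigl(\min_{u}\Tilde h(\cdot\,,u)\bigr)\le C(R)$ for all $\uppi\in\eom(R)$, whence $\eom(R)$ is tight. As $\Usm(R,v_{0})$ is a closed, hence compact, subset of $\Usm$ and the map $v\mapsto\uppi_{v}$ is continuous on it (the uniform Lyapunov bound prevents loss of mass), $\eom(R)$ is compact; since $\uppi\mapsto\uppi(r^{R})$ is lower semicontinuous on it, the infimum is attained and $\Bar\eom(R)\ne\varnothing$.

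For part~(ii) the point is uniformity in $R$. Feasibility of $v_{0}$ for every $R$ together with $r^{R}(\cdot\,,v_{0})=r(\cdot\,,v_{0})$ gives $\Bar\uppi^{R}(r^{R})\le\uppi_{v_{0}}(r^{R})=\uppi_{v_{0}}(r)=:\varrho_{0}<\infty$ independently of $R$; feeding this into the truncated version of \eqref{E-key} yields $\Bar\uppi^{R}\bigl(\min_{u}\Tilde h(\cdot\,,u)\bigr)\le\Bar\uppi^{R}(\Tilde h^{R})\le k_{0}(1+\varrho_{0})$ for every $R$, and inf-compactness of $\min_{u}\Tilde h$ makes $\{\Bar\uppi^{R}\colon R>0\}$ tight.

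For parts~(iii)--(iv), pass along a subsequence with $\Bar\uppi^{R}\to\Bar\uppi^{\infty}$ weakly, possible by part~(ii). Since $b^{R}=b$ on any fixed compact set for $R$ large and no mass escapes, one passes to the limit in the defining relation $\int\Lg^{u,R}f\,\D\Bar\uppi^{R}=0$, $f\in\Cc^{\infty}_{c}(\RR^{d})$, to conclude $\Bar\uppi^{\infty}\in\eom$, hence $\Bar\uppi^{\infty}(r)\ge\varrho^{*}$; and a Fatou/lower-semicontinuity argument (using $r\ge0$ and that $r^{R}=r$ on sets exhausting $\RR^{d}\times\Act$) gives $\Bar\uppi^{\infty}(r)\le\liminf_{R}\Bar\uppi^{R}(r^{R})$. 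The one genuinely delicate step is the reverse bound $\limsup_{R}\Bar\uppi^{R}(r^{R})\le\varrho^{*}$: given $\varepsilon>0$, choose stationary $v_{\varepsilon}$ with $\uppi_{v_{\varepsilon}}(r)<\varrho^{*}+\varepsilon$, form the patched control $v_{\varepsilon}^{R}\in\Usm(R,v_{0})$ equal to $v_{\varepsilon}$ on $\Bar{B}_{R}$ and to $v_{0}$ outside, and show $\limsup_{R}\uppi_{v_{\varepsilon}^{R}}(r^{R})\le\uppi_{v_{\varepsilon}}(r)$. This is exactly where the hypothesis $\varphi_{0}\in\order\bigl(\min_{u}\Tilde h(\cdot\,,u)\bigr)$ is used: combined with $\uppi_{v_{\varepsilon}}(\Tilde h)<\infty$ (from \eqref{E-key}) it lets one control the invariant measure of $v_{\varepsilon}^{R}$ outside $\Bar{B}_{R}$ through the Poisson equation \eqref{E-vphi0} and conclude that the cost contributed by the patch vanishes as $R\to\infty$; this is carried out in \cite[Theorem~4.2]{ABP14}. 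Combining the two bounds gives $\Bar\uppi^{\infty}(r)=\varrho^{*}$ and $\Bar\uppi^{R}(r^{R})\to\varrho^{*}$ along the subsequence, and since $\varrho^{*}$ is the common subsequential limit the full limit in part~(iv) follows. I expect this last $\limsup$ estimate to be the main obstacle: it is the only step at which the truncation genuinely bites and where the growth condition on $\varphi_{0}$ is essential; everything else is a matter of organizing the stability and tightness bounds and appealing to the framework of \cite{ABP14}.
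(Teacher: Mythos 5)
Your proposal is correct and follows the same route as the paper, which offers no written proof of this lemma at all: it simply asserts that under Hypotheses~\ref{HypA} and \ref{HypB} all conclusions of Theorems~4.1 and 4.2 of \cite{ABP14} hold and records the lemma as a consequence. Your sketch is a faithful expansion of that citation — verifying that the truncated data inherit the hypotheses, using $\varphi_{0}$ as a common Lyapunov function for $\Usm(R,v_{0})$, extracting the uniform bound from \eqref{E-key}, and correctly isolating the $\limsup$ estimate (where $\varphi_{0}\in\order\bigl(\min_{u}\Tilde{h}(\cdot\,,u)\bigr)$ is needed) as the step carried out in \cite[Theorem~4.2]{ABP14}.
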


\subsection{Ergodic control under constraints} \label{sec-constraint}

Let $r_{i}\,\colon\,\RR^{d}\to\RR_{+}$, $0\le i\le \Bar{k}$,
be a set of continuous functions, each satisfying (A4).  Define 
\begin{equation} \label{E-ec-r}
r \;\df \; \sum_{i=0}^{\Bar{k}} r_{i}\,.
\end{equation}
We are also given a set of positive constants $\updelta_{i}$,
$i=1,\dotsc,\Bar{k}$.
The objective is to minimize
\begin{equation} \label{E-ec-c1}
\uppi(r_{0})\;=\;\int_{\Rd\times\Act} r_{0}(x,u)\,\uppi(\D{x},\D{u})
\end{equation}
over all $\uppi\in\eom$, subject to
\begin{equation} \label{E-ec-c2}
\uppi(r_{i})\;=\;\int_{\Rd\times\Act} r_{i}(x,u)\,\uppi(\D{x},\D{u})
\;\le\;\updelta_{i}\,,
\qquad i=1,\dotsc,\Bar{k}\,.
\end{equation}
For
$\updelta=(\updelta_1,\dotsc,\updelta_{\Bar{k}})\in\RR_{+}^{\Bar{k}}$
let
\begin{equation}\label{E-sH}
\begin{split}
\sH(\updelta)
\;\df\; \bigl\{\uppi\in\eom\;\colon\, \uppi(r_{i})\le \updelta_{i}\,,\;
i=1,\dotsc,\Bar{k}\}\,,\\[5pt]
\sH^{\mathrm{o}}(\updelta)
\;\df\; \bigl\{\uppi\in\eom\;\colon\,
\uppi(r_{i})< \updelta_{i}\,,\;
i=1,\dotsc,\Bar{k}\}\,.
\end{split}
\end{equation}
%and define
%\begin{equation*}
%\sJ(\updelta) \;\df\; \inf_{\uppi\,\in\,\sH(\updelta)}\;\uppi(r_{0})\,.
%\end{equation*}
It is straightforward to show that $\sH(\updelta)$ is convex and closed in $\eom$.
Let $\sH_{\mathrm{e}}(\updelta)$
($\eom_{\mathrm{e}}$) denote the set of extreme points
of $\sH(\updelta)$ ($\eom$).

Throughout this section we assume that Hypothesis~\ref{HypA}
holds for $r$ in \eqref{E-ec-r} without any further mention.
We have the following lemma.

\begin{lemma}\label{L3.3}
Suppose that
\begin{equation*}
\sH(\updelta)\cap\{\uppi\in\eom\;\colon\, \uppi(r_{0})<\infty\}\ne\varnothing\,.
\end{equation*}
Then there exists $\uppi^{*}\in\sH(\updelta)$ such that
\begin{equation*}
\uppi^{*}(r_{0}) \;=\; \inf_{\uppi\,\in\,\sH(\updelta)}\; \uppi(r_{0})\,.
\end{equation*}
Moreover, $\uppi^{*}$ may be selected so as to correspond to a
precise stationary Markov control.
\end{lemma}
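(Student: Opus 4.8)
The plan is to realize $\sH(\updelta)$ as a closed convex subset of $\eom$ and to argue that the infimum of the lower semicontinuous functional $\uppi\mapsto\uppi(r_0)$ is attained there, then to pass from an arbitrary minimizer to one supported on a precise stationary Markov control via an extreme-point / measurable-selection argument. First I would recall from Lemma~\ref{L3.1} that, under Hypothesis~\ref{HypA}, the inf-compact function $\Tilde h$ dominates $r=\sum_{i=0}^{\Bar k}r_i$ pointwise and satisfies the key estimate \eqref{E-key}, namely $J_{x,U}[\Tilde h]\le k_0(1+J_{x,U}[r])$ for all admissible $U$. In particular, if $\uppi\in\sH(\updelta)$ with $\uppi(r_0)<\infty$, then since $\uppi(r_i)\le\updelta_i$ for $i\ge1$ we get $\uppi(r)<\infty$, hence $\uppi(\Tilde h)<\infty$, so such $\uppi$ lies in the sublevel set $\{\uppi\in\eom:\uppi(\Tilde h)\le N\}$ for some $N$. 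The nonemptiness hypothesis of the lemma therefore guarantees that, for $N$ large enough, the set
\begin{equation*}
\sH_N(\updelta)\;\df\;\sH(\updelta)\cap\{\uppi\in\eom:\uppi(\Tilde h)\le N\}
\end{equation*}
is nonempty, and minimizing $\uppi(r_0)$ over $\sH(\updelta)$ is the same as minimizing it over $\sH_N(\updelta)$ for all sufficiently large $N$.

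Next I would establish that $\sH_N(\updelta)$ is compact in $\cP(\Rd\times\Act)$. Tightness follows because $\Tilde h$ is inf-compact: the bound $\uppi(\Tilde h)\le N$ forces mass to concentrate on the compact sublevel sets of $\min_{u}\Tilde h(\cdot,u)$, so $\{\uppi:\uppi(\Tilde h)\le N\}$ is tight, and $\Act$ is compact. Closedness of $\sH_N(\updelta)$ in $\eom$ uses that $\eom$ is closed (as in \cite{book}), that the constraint functionals $\uppi\mapsto\uppi(r_i)$ and $\uppi\mapsto\uppi(\Tilde h)$ are lower semicontinuous (each $r_i$ and $\Tilde h$ being nonnegative and continuous), and that the superlevel-type constraints $\uppi(r_i)\le\updelta_i$, $\uppi(\Tilde h)\le N$ are preserved under weak limits precisely because of this lower semicontinuity. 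Hence $\sH_N(\updelta)$ is a nonempty compact convex set. Then $\uppi\mapsto\uppi(r_0)$ is lower semicontinuous on this compact set (again since $r_0\ge0$ is continuous), so it attains its infimum at some $\uppi^*\in\sH_N(\updelta)\subset\sH(\updelta)$, giving $\uppi^*(r_0)=\inf_{\uppi\in\sH(\updelta)}\uppi(r_0)$.

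For the ``moreover'' clause I would argue that $\uppi^*$ can be taken to be an extreme point of the convex set $\sF\df\{\uppi\in\sH(\updelta):\uppi(r_0)\le\uppi^*(r_0)\}$; since $\sF$ is again compact convex (the same lower semicontinuity argument applied to the single extra constraint $\uppi(r_0)\le\uppi^*(r_0)$, which is finite), it has extreme points by Krein--Milman, and any extreme point of $\sF$ is optimal. An extreme point of $\sF$ is characterized by at most $\Bar k+1$ of the constraints $\uppi(r_i)\le\updelta_i$, $\uppi(r_0)\le\uppi^*(r_0)$ being active along directions in $\eom$; more to the point, the disintegration of any $\uppi\in\eom$ as $\mu_{v}(\D x)v(\D u\mid x)$ shows that if $v(\cdot\mid x)$ were genuinely relaxed on a set of positive $\mu_v$-measure one could split $\uppi$ into a nontrivial convex combination of two distinct elements of $\eom$ with the same or smaller values of all $r_i$, contradicting extremality; so the extreme optimal $\uppi^*$ corresponds to a precise stationary Markov control $v^*$ (the measurability of $x\mapsto v^*(x)\in\Act$ following from a standard measurable-selection theorem, as $\Act$ is compact metrizable). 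The main obstacle is precisely this last step: carefully showing that extremality in $\sH(\updelta)$ forces the control to be precise rather than relaxed. This requires the convexity-in-$u$ structure of the relevant quantities — or, failing that, an argument in the spirit of the results of \cite{book} on extreme points of $\eom$ together with the ``finitely many binding constraints'' combinatorics — and care that splitting $\uppi^*$ respects the affine constraint defining $\eom$; the rest of the proof is a routine lower-semicompactness argument.
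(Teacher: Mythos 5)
Your existence argument is essentially the paper's: both restrict to the nonempty subset of $\sH(\updelta)$ on which $\uppi(r_{0})$ is bounded, use \eqref{E-key} to bound $\uppi(\Tilde{h})$ there, deduce pre-compactness from the inf-compactness of $\Tilde{h}$, and conclude via closedness of $\eom$ and lower semicontinuity of $\uppi\mapsto\uppi(r_{i})$. That part is fine.

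The gap is in the passage from an extremal minimizer to a precise control. Your splitting heuristic --- ``if $v(\cdot\mid x)$ were genuinely relaxed on a set of positive $\mu_{v}$-measure one could split $\uppi$ into a nontrivial convex combination of two distinct elements of $\eom$ with the same or smaller values of all $r_{i}$'' --- does not work as stated: a convex split only preserves the averages $\uppi(r_{i})=\tfrac{1}{2}\bigl(\uppi_{1}(r_{i})+\uppi_{2}(r_{i})\bigr)$, so one of the two pieces may violate a constraint $\uppi(r_{i})\le\updelta_{i}$; moreover, perturbing the control on a set changes the invariant measure globally, so it is not even clear that the two pieces lie in $\eom$ and average back to $\uppi$. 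This is precisely why extreme points of a constrained subset of a convex set are not automatically extreme points of the ambient set, and why a specific structural input is needed here. The paper supplies it by applying Choquet's theorem (as in the proof of \cite[Lemma~4.2.3]{book}) to produce an extreme point of the compact convex set $\widehat\sH$ attaining the minimum of the affine functional $\uppi\mapsto\uppi(r_{0})$, and then invoking \cite[Lemma~4.2.5]{book}, which asserts that the extreme points of such a finitely-constrained subset of $\eom$ are still extreme points of $\eom$; since every element of $\eom_{\mathrm{e}}$ corresponds to a precise stationary Markov control, this closes the argument. You correctly sense that the extreme-point results of \cite{book} are what is required, but the concrete argument you offer in their place would fail, so as written the ``moreover'' clause is not established.
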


\begin{proof}
By hypothesis, there exists $\updelta_{0}\in\RR_+$ such that
$\widehat\sH \df\sH(\updelta) \cap 
\{\uppi\in\eom\,\colon\,\uppi(r_{0})\le\updelta_{0}\}\ne\varnothing$.
By \eqref{E-key} we have
\begin{equation}\label{EL3.3b}
\uppi(\Tilde{h})\;\le\; k_{0}+ k_{0}\,\sum_{i=1}^{\Bar{k}}\updelta_{i}
+k_{0}\,\uppi(r_{0})
\qquad\forall\, \uppi\in\sH(\updelta)\,,
\end{equation}
which implies, since $\Tilde{h}$ is inf-compact, that
$\widehat\sH$ is pre-compact in $\cP(\Rd\times\Act)$.
Let $\uppi_{n}$ be any sequence in $\widehat\sH$ such that
\begin{equation*}
\uppi_{n}(r_{0})\;\xrightarrow[n\to\infty]{} \varrho_{0}\;\df\;
\inf_{\uppi\,\in\,\sH(\updelta)}\; \uppi(r_{0})\,.
\end{equation*}
By compactness $\uppi_{n}\to\uppi^{*}\in\cP(\Rd\times\Act)$ along some subsequence.
Since $\eom$ is closed in $\cP(\Rd\times\Act)$,
it follows that $\uppi^{*}\in\eom$.
On the other hand, since the functions $r_{i}$ are continuous and bounded below,
it follows that the map $\uppi\mapsto\uppi(r_{i})$ is lower-semicontinuous,
which implies that
$\uppi^{*}(r_{0})\le \varrho_{0}$
and $\uppi^{*}(r_{i})\le \updelta_{i}$ for $i=1,\dotsc,\Bar{k}$.
It follows that $\uppi^{*}\in\widehat\sH\subset\sH(\updelta)$.
Therefore, $\widehat\sH$ is closed, and therefore also compact.

Applying Choquet's theorem as in the proof of \cite[Lemma~4.2.3]{book},
it follows that there exists
$\Tilde\uppi^{*}\in\widehat\sH_{\mathrm{e}}$, the set of
extreme points of $\widehat\sH$, such that $\Tilde\uppi^{*}(r_{0})=\varrho_{0}$.
On the other hand, we have
$\widehat\sH_{\mathrm{e}}\subset\eom_{\mathrm{e}}$ by \cite[Lemma~4.2.5]{book}.
It follows that $\Tilde\uppi^{*}\in\sH(\updelta)\cap\eom_{\mathrm{e}}$.
Since every element of $\eom_{\mathrm{e}}$ corresponds to a precise
stationary Markov control, the proof is complete.
\end{proof}

\begin{definition}
We say that the vector $\updelta\in (0,\infty)^{\Bar{k}}$
is \emph{feasible} (or that the constraints in \eqref{E-ec-c2} are feasible)
if  there exists $\uppi'\in\sH^{\mathrm{o}}(\updelta)$ such that
$\uppi'(r_{0})<\infty$.
\end{definition}

\begin{lemma}\label{L3.4}
If $\Hat\updelta\in (0,\infty)^{\Bar{k}}$ is feasible,  then
$\updelta\mapsto\inf_{\uppi\,\in\,\sH(\updelta)}\;\uppi(r_{0})$
is continuous at $\Hat\updelta$.
\end{lemma}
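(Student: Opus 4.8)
The plan is to prove continuity at a feasible point $\Hat\updelta$ by establishing monotonicity, upper semicontinuity, and lower semicontinuity of the map $F(\updelta)\df\inf_{\uppi\in\sH(\updelta)}\uppi(r_{0})$ separately. First note that $F$ is nonincreasing in each coordinate of $\updelta$, since enlarging any $\updelta_{i}$ enlarges the feasible set $\sH(\updelta)$; in particular $F$ is finite and nonnegative in a neighborhood of $\Hat\updelta$ because $\Hat\updelta$ is feasible, hence $\sH^{\mathrm o}(\Hat\updelta)$ (and a fortiori $\sH(\updelta)$ for $\updelta\ge\Hat\updelta$ componentwise) contains some $\uppi'$ with $\uppi'(r_{0})<\infty$. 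For upper semicontinuity from above, monotonicity gives $F(\updelta)\le F(\Hat\updelta)$ whenever $\updelta\ge\Hat\updelta$; the real content is controlling $F$ as $\updelta\downarrow\Hat\updelta$ and as $\updelta$ approaches $\Hat\updelta$ from directions where some coordinates decrease.

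For \emph{lower semicontinuity} at $\Hat\updelta$: take $\updelta^{(n)}\to\Hat\updelta$ and, by Lemma~\ref{L3.3} applied at each $\updelta^{(n)}$ (whose hypothesis holds for $n$ large, since feasibility of $\Hat\updelta$ and $\updelta^{(n)}\to\Hat\updelta$ guarantee $\sH(\updelta^{(n)})$ meets $\{\uppi(r_{0})<\infty\}$ — indeed $\uppi'$ itself lies in $\sH(\updelta^{(n)})$ for large $n$ by the strict inequalities), choose optimizers $\uppi_{n}\in\sH(\updelta^{(n)})$ with $\uppi_{n}(r_{0})=F(\updelta^{(n)})$. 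Along a subsequence where $F(\updelta^{(n)})\to\liminf_n F(\updelta^{(n)})$, the bound \eqref{EL3.3b}-type estimate $\uppi_{n}(\Tilde h)\le k_{0}+k_{0}\sum_{i}\updelta^{(n)}_{i}+k_{0}\uppi_{n}(r_{0})$ combined with inf-compactness of $\Tilde h$ shows $\{\uppi_{n}\}$ is tight, so $\uppi_{n}\to\uppi^{*}$ along a further subsequence, with $\uppi^{*}\in\eom$ since $\eom$ is closed. Lower semicontinuity of $\uppi\mapsto\uppi(r_{i})$ (the $r_{i}$ are continuous and bounded below) yields $\uppi^{*}(r_{i})\le\liminf_n\uppi_{n}(r_{i})\le\liminf_n\updelta^{(n)}_{i}=\Hat\updelta_{i}$, so $\uppi^{*}\in\sH(\Hat\updelta)$, and likewise $\uppi^{*}(r_{0})\le\liminf_n F(\updelta^{(n)})$. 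Hence $F(\Hat\updelta)\le\uppi^{*}(r_{0})\le\liminf_n F(\updelta^{(n)})$, which is lower semicontinuity.

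For \emph{upper semicontinuity} at $\Hat\updelta$, fix $\varepsilon>0$ and pick (by Lemma~\ref{L3.3}) an optimizer $\Bar\uppi\in\sH(\Hat\updelta)$ with $\Bar\uppi(r_{0})=F(\Hat\updelta)$, together with the feasible witness $\uppi'\in\sH^{\mathrm o}(\Hat\updelta)$, $\uppi'(r_{0})<\infty$. The key device is the convexity and closedness of $\sH(\cdot)$: form the convex combinations $\uppi_{\theta}\df(1-\theta)\Bar\uppi+\theta\uppi'\in\eom$ for $\theta\in(0,1]$, which satisfy $\uppi_{\theta}(r_{i})\le(1-\theta)\Hat\updelta_{i}+\theta\uppi'(r_{i})<\Hat\updelta_{i}$ for each $i\ge1$ (strictly, because $\uppi'(r_{i})<\Hat\updelta_{i}$), while $\uppi_{\theta}(r_{0})\le\Bar\uppi(r_{0})+\theta\uppi'(r_{0})=F(\Hat\updelta)+\theta\uppi'(r_{0})$. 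Choosing $\theta$ small enough that $\theta\uppi'(r_{0})<\varepsilon$, the measure $\uppi_{\theta}$ has $\uppi_{\theta}(r_{i})<\Hat\updelta_{i}$ for all $i\ge1$ with a strict margin $\eta>0$; therefore $\uppi_{\theta}\in\sH(\updelta)$ for every $\updelta$ with $\abs{\updelta-\Hat\updelta}<\eta$, giving $F(\updelta)\le\uppi_{\theta}(r_{0})<F(\Hat\updelta)+\varepsilon$ for all such $\updelta$. Letting $\varepsilon\downarrow0$ yields $\limsup_{\updelta\to\Hat\updelta}F(\updelta)\le F(\Hat\updelta)$. Combining with the lower semicontinuity step gives continuity at $\Hat\updelta$.

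The main obstacle is the upper semicontinuity direction: without the feasibility assumption one only has $\uppi$'s on the boundary $\uppi(r_{i})=\updelta_{i}$, and such measures need not survive a small decrease of some $\updelta_{i}$, so $F$ could jump. Feasibility is exactly what supplies the strictly-interior point $\uppi'$ that lets us perturb any near-optimal $\Bar\uppi$ into the interior of the constraint set at negligible cost in $r_{0}$; making this perturbation argument precise — in particular verifying that the convex combinations stay in $\eom$ (immediate, since $\eom$ is convex) and that the strict inequalities are uniform over a neighborhood of $\Hat\updelta$ — is the crux, and the rest is the now-standard tightness-plus-lower-semicontinuity machinery already used in the proof of Lemma~\ref{L3.3}.
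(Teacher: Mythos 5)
Your proof is correct, but it takes a different and considerably more explicit route than the paper. The paper disposes of the lemma in one line: the value functional $\updelta\mapsto\inf\{\uppi(r_{0})\colon\uppi\in\eom,\ \uppi(r_{i})\le\updelta_{i}\}$ is convex in $\updelta$ (as the primal value of a convex program with linear objective and linear constraints over the convex set $\eom$), it is nonnegative, and feasibility of $\Hat\updelta$ supplies an interior point $\uppi'$ that keeps it bounded above on a ball around $\Hat\updelta$; a convex function on $\RR^{\Bar{k}}$ that is bounded on a neighborhood of a point is continuous (indeed locally Lipschitz) there. You instead prove the two semicontinuities by hand: lower semicontinuity via the tightness argument of Lemma~\ref{L3.3} --- the bound \eqref{EL3.3b} plus inf-compactness of $\Tilde{h}$ gives tightness of the optimizers $\uppi_{n}$, closedness of $\eom$ and lower semicontinuity of $\uppi\mapsto\uppi(r_{i})$ pass the constraints and the value to the limit --- and upper semicontinuity via convex combination with the Slater point $\uppi'$, which pushes a near-optimal measure strictly inside the constraint set at a cost of at most $\theta\,\uppi'(r_{0})$ in the objective. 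Your upper-semicontinuity step is essentially the standard proof that a convex function bounded near a point is continuous, unrolled in this specific setting, so the two arguments are closely related; what yours buys is that it is self-contained (no appeal to the general convex-analysis fact), it isolates exactly where feasibility enters, and as a byproduct it shows that optimizers for the perturbed constraint levels converge subsequentially to an optimizer for $\Hat\updelta$. What the paper's version buys is brevity and the stronger conclusion of local Lipschitz continuity. Two trivial points you may wish to make explicit: in the lower-semicontinuity step the case $\liminf_{n}F(\updelta^{(n)})=\infty$ is vacuous, and the inequality $\uppi_{\theta}(r_{0})\le\Bar\uppi(r_{0})+\theta\,\uppi'(r_{0})$ uses $\Bar\uppi(r_{0})\ge0$.
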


\begin{proof}
This follows directly from the fact that, since
$\Hat\updelta$ is feasible, the primal functional
\begin{equation*}
\updelta\,\mapsto\, \inf_{\uppi\in\eom}\,\{\uppi(r_{0})\,\colon\,
\uppi(r_{i})\le\updelta_{i}\,, i=1,\dotsc,\Bar{k}\bigr\}
\end{equation*}
is bounded and convex in some ball centered at $\Hat\updelta$ in $\RR^{\Bar{k}}$.
\end{proof}

\begin{definition}\label{D3.4}
For $\updelta\in\RR_{+}^{\Bar{k}}$ and
$\uplambda=(\uplambda_{1}\,\dotsc,\uplambda_{\Bar{k}})\transp
\in\RR^{\Bar{k}}_{+}$
define the running cost $g_{\updelta,\uplambda}$ by
\begin{equation*}
g_{\updelta,\uplambda}(x,u) \;\df\; r_{0}(x,u) + \sum_{i=1}^{\Bar{k}}
\uplambda_{i}\bigl(r_{i}(x,u)-\updelta_{i}\bigr)\,.
\end{equation*}
Also, for $\beta>0$ and $\Hat\updelta\in (0,\infty)^{\Bar{k}}$,
we define the set of Markov controls
\begin{equation*}
\sU_{\beta}(\updelta)\;\df\;\bigl\{v\in\Ussm\;\colon\, \uppi_{v}\in\sH(\updelta)\,,~
\uppi_{v}(r_{0})\le\beta\}\,,
\end{equation*}
and let $\sH_{\beta}(\updelta)$ denote the corresponding set of  ergodic occupation
measures.
\end{definition}

Lagrange multiplier theory provides us with the following.

\begin{lemma}\label{L3.5}
Suppose that $\updelta$ is feasible.
Then the following hold.
\begin{itemize}
\item[(i)]
There exists $\uplambda^{*}\in\RR^{\Bar{k}}_{+}$ such that
\begin{equation}\label{EE3.17}
\inf_{\uppi\,\in\,\sH(\updelta)}\;\uppi(r_{0}) \;=\;
\inf_{\uppi\,\in\,\eom}\;\uppi(g_{\updelta,\uplambda^{*}})\,.
\end{equation}
\item[(ii)]
Moreover, for any $\uppi^{*}\in\sH(\updelta)$
that attains the infimum of $\uppi\mapsto\uppi(r_{0})$
in $\sH(\updelta)$, we have
\begin{equation*}
\uppi^{*}(r_{0})\;=\;\uppi^{*}(g_{\updelta,\uplambda^{*}})\,,
\end{equation*}
and
\begin{equation*}
\uppi^{*}(g_{\updelta,\uplambda})\;\le\;\uppi^{*}(g_{\updelta,\uplambda^{*}})
\;\le\;
\uppi(g_{\updelta,\uplambda^{*}})
\qquad\forall\,(\uppi,\uplambda)\in\eom\times\RR^{\Bar{k}}_{+}\,.
\end{equation*}
\end{itemize}
\end{lemma}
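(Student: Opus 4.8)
The plan is to recognize this as a standard Lagrangian duality statement for a convex program and to invoke classical Lagrange multiplier theory over the convex set $\eom$. First I would set up the primal value function $\Phi(\updelta')\df\inf_{\uppi\in\eom}\{\uppi(r_0)\colon\uppi(r_i)\le\updelta'_i,\ i=1,\dotsc,\Bar k\}$ on $\RR^{\Bar k}$, which is convex because $\eom$ is a convex set (it is the set of ergodic occupation measures, hence convex) and the maps $\uppi\mapsto\uppi(r_i)$ are affine; moreover by Lemma~\ref{L3.4} (using feasibility of $\updelta$) it is finite and continuous in a neighborhood of $\updelta$, hence locally bounded. Feasibility also gives a Slater point $\uppi'\in\sH^{\mathrm o}(\updelta)$ with $\uppi'(r_0)<\infty$, i.e. a strictly feasible point, which is exactly what is needed to rule out a duality gap and to guarantee the existence of a genuine (finite, nonnegative) multiplier vector rather than a merely asymptotic one.

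For part (i): because $\Phi$ is convex and finite near $\updelta$, it admits a subgradient $-\uplambda^{*}$ at $\updelta$ with $\uplambda^{*}\in\RR^{\Bar k}$; the usual monotonicity argument (relaxing a constraint cannot increase the infimum, so $\Phi$ is nonincreasing in each coordinate) forces $\uplambda^{*}\ge 0$, so $\uplambda^{*}\in\RR^{\Bar k}_+$. The subgradient inequality $\Phi(\updelta')\ge\Phi(\updelta)-\uplambda^{*}\cdot(\updelta'-\updelta)$ for all $\updelta'$ then yields, after taking the infimum over $\uppi\in\eom$ of the quantity $\uppi(r_0)+\sum_i\uplambda^{*}_i(\uppi(r_i)-\updelta_i)$ and comparing, the weak-duality-becomes-equality statement
\begin{equation*}
\inf_{\uppi\,\in\,\sH(\updelta)}\uppi(r_0)\;=\;\Phi(\updelta)\;=\;\inf_{\uppi\,\in\,\eom}\uppi(g_{\updelta,\uplambda^{*}})\,,
\end{equation*}
which is \eqref{EE3.17}. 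The $\le$ direction is immediate (any $\uppi\in\sH(\updelta)$ has $g_{\updelta,\uplambda^{*}}\le r_0$ on it); the $\ge$ direction is precisely where the Slater point and the subgradient are used. Here one should also note, using Lemma~\ref{L3.3}, that the infimum on the left is attained by some $\uppi^{*}\in\sH(\updelta)$ corresponding to a precise stationary Markov control, so the statement is not vacuous.

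For part (ii): let $\uppi^{*}$ attain $\inf_{\sH(\updelta)}\uppi(r_0)$. From \eqref{EE3.17} and $g_{\updelta,\uplambda^{*}}\le r_0$ on $\sH(\updelta)$ we get $\uppi^{*}(g_{\updelta,\uplambda^{*}})\le\uppi^{*}(r_0)=\inf_{\eom}\uppi(g_{\updelta,\uplambda^{*}})\le\uppi^{*}(g_{\updelta,\uplambda^{*}})$, so equality holds throughout, giving $\uppi^{*}(r_0)=\uppi^{*}(g_{\updelta,\uplambda^{*}})$ and the second (right-hand) inequality $\uppi^{*}(g_{\updelta,\uplambda^{*}})\le\uppi(g_{\updelta,\uplambda^{*}})$ for all $\uppi\in\eom$. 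The equality $\uppi^{*}(r_0)=\uppi^{*}(g_{\updelta,\uplambda^{*}})$ forces the complementary slackness relation $\sum_i\uplambda^{*}_i(\uppi^{*}(r_i)-\updelta_i)=0$; combined with $\uplambda_i\ge0$ and $\uppi^{*}(r_i)\le\updelta_i$ this yields $\sum_i\uplambda_i(\uppi^{*}(r_i)-\updelta_i)\le 0=\sum_i\uplambda^{*}_i(\uppi^{*}(r_i)-\updelta_i)$, i.e. the first (left-hand) inequality $\uppi^{*}(g_{\updelta,\uplambda})\le\uppi^{*}(g_{\updelta,\uplambda^{*}})$ for all $\uplambda\in\RR^{\Bar k}_+$, completing the saddle-point statement.

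The main obstacle is the justification of the no-duality-gap/subgradient step in part (i): one must confirm that $\eom$ being convex together with affine constraint functionals and a Slater point genuinely puts us in the scope of classical finite-dimensional Lagrangian duality — the subtlety is that $\eom$ lives in the infinite-dimensional space $\cP(\Rd\times\Act)$, but since only the finitely many real functionals $\uppi\mapsto(\uppi(r_0),\dotsc,\uppi(r_{\Bar k}))$ enter, the argument can be pushed down to the convex image of $\eom$ in $\RR^{\Bar k+1}$, where convexity of $\Phi$ and the supporting-hyperplane theorem suffice; one must be slightly careful that $\uppi(r_0)$ may be $+\infty$ for some $\uppi$, which is handled by restricting to the effective domain and using that the Slater point has finite $r_0$-cost.
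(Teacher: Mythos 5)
Your argument is correct and is essentially the same as the paper's: the paper's proof of Lemma~\ref{L3.5} consists solely of the remark ``The proof is standard'' together with a citation of Luenberger, pp.~216--221, and the classical convex-program duality you spell out (convexity of the primal value function on the image of $\eom$ in $\RR^{\Bar{k}+1}$, the Slater point supplied by feasibility, the resulting nonnegative subgradient $\uplambda^{*}$, and complementary slackness for part~(ii)) is precisely the content of those pages. Your added care about $\uppi(r_{0})$ possibly being $+\infty$ and about pushing the argument down to finitely many affine functionals is a reasonable and accurate filling-in of what the citation leaves implicit.
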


\begin{proof}
The proof is standard.
See \cite[pp.~216--221]{Luenberger}.
\end{proof}

We next state the associated dynamic programming formulation of the ergodic
control problem under constraints.
Recall that $\tc_{\varepsilon}$
denotes the first hitting time of the ball $B_{\varepsilon}$,
for $\varepsilon>0$.

\begin{theorem} \label{T3.1}
Suppose that $\updelta\in (0,\infty)^{\Bar{k}}$ is feasible.
Let  $\uplambda^{*}\in\RR^{\Bar{k}}_{+}$ be as in Lemma~\ref{L3.5},
and $\uppi^{*}$ be any element of $\sH(\updelta)$ that attains the infimum
in \eqref{EE3.17}.
Then, the following hold.
\begin{itemize}
\item[(a)] 
There exists a $\varphi_{*}\in\Cc^{2}(\Rd)$
satisfying 
\begin{equation}\label{E-HJB-constr}
\min_{u\in\Act}\;\bigl[\Lg^{u}\varphi_{*}(x)
+ g_{\updelta,\uplambda^{*}}(x,u)\bigr]
\;=\; \uppi^{*}(g_{\updelta,\uplambda^{*}})\,,\quad
x\in\RR^{d}\,.
\end{equation}

\smallskip
\item[(b)]
With $\Lyap$ as in Hypothesis~\ref{HypA}, we have
$\varphi_{*}\in\order(\Lyap)$, and $\varphi^{-}_{*}\in\sorder(\Lyap)$.

\smallskip
\item[(c)] A stationary Markov control $v\in\Ussm$
is optimal if and only if it satisfies 
\begin{equation}\label{E-3.13}
\min_{u\in\Act}\;\sR_{\updelta,\uplambda^{*}}(x,\nabla\varphi_{*}(x);u)\;=\;
b\bigl(x, v(x)\bigr)\cdot \nabla\varphi_{*}(x)
+ g_{\updelta,\uplambda^{*}}\bigl(x,v(x)\bigr) \,,\quad
x\in\RR^{d}\,, 
\end{equation}
where
\begin{equation*}
\sR_{\updelta,\uplambda^{*}}(x,p;u) \;\df\; b\bigl(x,u\bigr)\cdot p
+ g_{\updelta,\uplambda^{*}}(x,u)\,.
\end{equation*}

\smallskip
\item[\textup{(}d\textup{)}]
The function $\varphi_{*}$ has the stochastic representation
\begin{align*}
\varphi_{*}(x)&\;=\;
\lim_{\varepsilon\searrow0}\;
\inf_{v\,\in\,\bigcup_{\beta>0}\,\sU_{\beta}(\updelta)}\;
\Exp^{v}_{x}\biggl[\int_{0}^{\tc_{\varepsilon}}
\Bigl(g_{\updelta,\uplambda^{*}}\bigl(X_{s},v(X_{s})\bigr)
-\uppi^{*}(g_{\updelta,\uplambda^{*}})\Bigr)\,\D{s}\biggr]\\[5pt]
&\;=\;
\lim_{\varepsilon\searrow0}\;\Exp^{\Bar{v}}_{x}\biggl[\int_{0}^{\tc_{\varepsilon}}
\Bigl(g_{\updelta,\uplambda^{*}}\bigl(X_{s},\Bar{v}(X_{s})\bigr)
-\uppi^{*}(g_{\updelta,\uplambda^{*}})\Bigr)\,\D{s}\biggr]\,,\nonumber
\end{align*}
for any $\Bar{v}\in\Usm$ that satisfies \eqref{E-3.13}.
\end{itemize}
\end{theorem}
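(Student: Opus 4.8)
The plan is to deduce the theorem from the unconstrained ergodic control theory of \cite{ABP14} reviewed in Section~\ref{sec-structural}, run with the Lagrangian running cost $g_{\updelta,\uplambda^{*}}$; Lemma~\ref{L3.5} supplies the passage from the constrained problem to this unconstrained one, and the bound \eqref{EL3.3b} plays the role of \eqref{E-key}.

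First I would collect what the preceding lemmas give. Since $\updelta$ is feasible, the hypothesis of Lemma~\ref{L3.3} holds, so there is $\uppi^{*}\in\sH(\updelta)$ attaining $\inf_{\uppi\in\sH(\updelta)}\uppi(r_{0})<\infty$; by Lemma~\ref{L3.5}(ii) this $\uppi^{*}$ also attains $\inf_{\uppi\in\eom}\uppi(g_{\updelta,\uplambda^{*}})$, satisfies $\uppi^{*}(r_{0})=\uppi^{*}(g_{\updelta,\uplambda^{*}})$, and obeys the complementary-slackness identities $\uplambda^{*}_{i}\bigl(\uppi^{*}(r_{i})-\updelta_{i}\bigr)=0$. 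Hence the right-hand side of \eqref{E-HJB-constr} is exactly the optimal value of the \emph{unconstrained} ergodic control problem with running cost $g_{\updelta,\uplambda^{*}}$, and \eqref{E-HJB-constr} is its average-cost optimality equation; \eqref{E-3.13} then describes the stable stationary controls optimal for that problem, and the saddle-point inequalities of Lemma~\ref{L3.5}(ii) (read through $\uppi\mapsto\uppi(r_{0})$ and $\uppi\mapsto\uppi(r_{i})$) identify these with the controls optimal for the constrained problem, which is the content of part~(c).

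The core of the work is to check that the controlled diffusion \eqref{E-sde} with running cost $g_{\updelta,\uplambda^{*}}$ meets the requirements of Section~\ref{sec-structural}. Since $g_{\updelta,\uplambda^{*}}$ is bounded below but may be negative, I would replace it by the nonnegative cost $\bar g\df g_{\updelta,\uplambda^{*}}+\sum_{i}\uplambda^{*}_{i}\updelta_{i}=r_{0}+\sum_{i}\uplambda^{*}_{i}r_{i}$, which changes the optimal value and the right side of \eqref{E-HJB-constr} only by this additive constant. Hypothesis~\ref{HypB} is immediate since $\uppi^{*}(\bar g)=\uppi^{*}(r_{0})+\sum_{i}\uplambda^{*}_{i}\updelta_{i}<\infty$. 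For Hypothesis~\ref{HypA} I would argue from the one assumed for $r=\sum_{i=0}^{\bar k}r_{i}$: as all $r_{i}\ge0$ one has $\bar g\le\bigl(1\vee\max_{i}\uplambda^{*}_{i}\bigr)\,r$, so the inf-compact majorant $\Tilde h$ that Lemma~\ref{L3.1} builds for $r$ still dominates a constant multiple of $\bar g$, and for every $v\in\bigcup_{\beta>0}\sU_{\beta}(\updelta)$ the estimate \eqref{EL3.3b} yields $J_{v}\bigl[\Tilde h\bigr]\le k_{0}\bigl(1+\textstyle\sum_{i}\updelta_{i}\bigr)\bigl(1+J_{v}[\bar g]\bigr)$, the analogue of \eqref{E-key}. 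Inf-compactness of $\bar g$ on a suitable open set and the two Lyapunov inequalities for $\bar g$ then follow from those for $r$ by rescaling $\Lyap$ \emph{provided all the multipliers $\uplambda^{*}_{i}$ are strictly positive} --- the situation in the applications of Sections~\ref{sec-HJBconstrained} and \ref{sec-fair}; should some $\uplambda^{*}_{i}$ vanish, I would first perturb $\uplambda^{*}$ to $\uplambda^{*}+\varepsilon e$, whose Lagrangian cost is sandwiched between positive multiples of $r$, solve the corresponding average-cost equation, and pass to the limit $\varepsilon\searrow0$ using the uniform estimates of \cite{ABP14} to obtain $\varphi_{*}$. I expect this verification --- chiefly handling degenerate multipliers without destroying inf-compactness --- to be the main obstacle; everything after it is an appeal to \cite{ABP14}.

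With the framework in place for $\bar g$, parts~(a) and (b) come from \cite[Theorems~3.4 and 3.5]{ABP14}, which --- after subtracting $\sum_{i}\uplambda^{*}_{i}\updelta_{i}$ from both sides of the average-cost equation --- produce $\varphi_{*}\in\Cc^{2}(\Rd)$ solving \eqref{E-HJB-constr}, with $\varphi_{*}\in\order(\Lyap)$ and $\varphi^{-}_{*}\in\sorder(\Lyap)$. Part~(c) is the verification lemma, obtained by integrating \eqref{E-3.13} against $\mu_{v}$ for $v\in\Ussm$ and using $\int_{\Rd}\Lg^{v}\varphi_{*}(x)\,\mu_{v}(\D{x})=0$ (legitimate by $\varphi_{*}\in\order(\Lyap)$): $v$ is a measurable selector of the minimum in \eqref{E-3.13} iff $\uppi_{v}(g_{\updelta,\uplambda^{*}})=\uppi^{*}(g_{\updelta,\uplambda^{*}})$, which by Lemma~\ref{L3.5} is equivalent to optimality in the constrained problem; in particular the stationary control $v^{*}$ underlying $\uppi^{*}$ is stable, feasible and satisfies \eqref{E-3.13}, so $v^{*}\in\bigcup_{\beta>0}\sU_{\beta}(\updelta)$. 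For part~(d) I would apply It\^o's formula to $\varphi_{*}$ stopped at $\tc_{\varepsilon}$: the inequality $\Lg^{u}\varphi_{*}(x)+g_{\updelta,\uplambda^{*}}(x,u)\ge\uppi^{*}(g_{\updelta,\uplambda^{*}})$ makes $\varphi_{*}$ a minorant of the $\tc_{\varepsilon}$-functional over all stable controls, with equality along any $\Bar v\in\Usm$ satisfying \eqref{E-3.13}; letting $\varepsilon\searrow0$ --- the passage to the limit justified by $\varphi^{-}_{*}\in\sorder(\Lyap)$ --- the minorant property together with its attainment at $v^{*}$ yields the first displayed identity, and the exact identity along any such $\Bar v$ yields the second.
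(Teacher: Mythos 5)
Your overall architecture --- reduce to the unconstrained Lagrangian problem via Lemma~\ref{L3.5} and then invoke the machinery of \cite{ABP14} --- is the right idea, and you correctly identify the central obstacle: $\bar g = r_{0}+\sum_{i}\uplambda^{*}_{i}r_{i}$ need not be inf-compact on $\cK$ when some multipliers vanish, so Hypothesis~\ref{HypA} cannot simply be ``transferred'' from $r$ to $\bar g$. But your proposed repair is where the gap lies. Perturbing the multipliers to $\uplambda^{*}+\varepsilon e$ only restores inf-compactness \emph{on $\cK$}, with a Lyapunov function and constants that degenerate as $\varepsilon\searrow0$ (the rescaling factor is of order $\min(1,\varepsilon)$), and the ``uniform estimates of \cite{ABP14}'' you appeal to are not available off the shelf for this family. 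The paper's perturbation is different and is chosen precisely to avoid this: it perturbs the \emph{cost}, to $g_{\updelta,\uplambda^{*}}+\varepsilon\Tilde{h}$ with $\Tilde{h}$ the globally inf-compact majorant of Lemma~\ref{L3.1}. This makes the perturbed problem genuinely near-monotone (inf-compact on all of $\Rd$, not just on $\cK$), keeps the same Lyapunov function $\Lyap$ for every $\varepsilon$, and comes equipped with the uniform bound $\sup_{\uppi\in\sH_{\beta}(\updelta)}\uppi(\Tilde{h})<\infty$ from \eqref{EL3.3b}, which is exactly what is needed to pass to the limit $\varepsilon\searrow0$ in parts (b)--(d). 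The same bound is what legitimizes the ``integrate $\Lg^{v}\varphi_{*}$ against $\mu_{v}$'' step in your argument for part (c): for a general $v\in\Ussm$ one does not know $\mu_{v}(\Lyap)<\infty$, so $\varphi_{*}\in\order(\Lyap)$ alone does not justify $\uppi_{v}(\Lg^{\cdot}\varphi_{*})=0$; the verification must go through the $\Tilde{h}$-controlled class.

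You should also note that the paper proves part (a) by an entirely different and more economical route that bypasses Hypothesis~\ref{HypA} for the Lagrangian cost altogether: take the stationary control $v^{*}$ underlying $\uppi^{*}$, solve the \emph{Poisson equation} $\Lg^{v^{*}}\varphi_{*}+g_{\updelta,\uplambda^{*}}(\cdot,v^{*})=\uppi^{*}(g_{\updelta,\uplambda^{*}})$ (possible by \cite[Lemma~3.7.8]{book} since $\uppi^{*}(g_{\updelta,\uplambda^{*}})<\infty$), and then show the minimum in \eqref{E-HJB-constr} is attained by a policy-improvement contradiction: if the minimum were strictly smaller on a set of positive measure, the control $v_{R}$ that minimizes the Hamiltonian on $B_{R}$ and equals $v^{*}$ outside would yield $\uppi_{R}(g_{\updelta,\uplambda^{*}})<\uppi^{*}(g_{\updelta,\uplambda^{*}})$ via \cite[Corollary~3.7.3]{book}, contradicting the saddle-point inequality of Lemma~\ref{L3.5}. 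This uses only the finiteness of $\uppi^{*}(g_{\updelta,\uplambda^{*}})$ and the global optimality of $\uppi^{*}$ over $\eom$, so the degenerate-multiplier case causes no difficulty for (a). Finally, be careful with the equivalence you assert in part (c): Lemma~\ref{L3.5} gives that a constrained-optimal $\uppi^{*}$ minimizes the Lagrangian, but a minimizer of the Lagrangian need not lie in $\sH(\updelta)$, so ``satisfies \eqref{E-3.13}'' is equivalent to optimality for the Lagrangian problem, not automatically to optimality for the constrained one.
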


\begin{proof}
Let $v^{*}\in\Ussm$
satisfy $\uppi^{*}(\D{x},\D{u})\df\mu_{v^{*}}(\D{x})\,v^{*}(\D{u}\mid x)$.
Since $\uppi^{*}(g_{\updelta,\uplambda^{*}})<\infty$,  there exists a function
$\varphi_{*}\in\Sobl^{2,p}(\RR^{d})$,
for any $p>d$, and such that $\varphi_{*}(0)=0$,
which solves the Poisson equation \cite[Lemma~3.7.8\,(ii)]{book}
\begin{equation}\label{E-Pois1}
\Lg^{v^{*}}\varphi_{*}(x) + g_{\updelta,\uplambda^{*}}\bigl(x,v^{*}(x)\bigr)
\;=\; \uppi^{*}(g_{\updelta,\uplambda^{*}})\,,\quad
x\in\RR^{d}\,,
\end{equation}
and satisfies, for all $\varepsilon>0$,
\begin{equation*}
\varphi_{*}(x) \;=\; \Exp^{v^{*}}_{x}\biggl[\int_{0}^{\tc_{\varepsilon}}
\Bigl(g_{\updelta,\uplambda^{*}}\bigl(X_{s},v^{*}(X_{s})\bigr)
-\uppi^{*}(g_{\updelta,\uplambda^{*}})\Bigr)\,
\D{s} + \varphi_{*}(X_{\tc_{\varepsilon}})\biggr]\qquad\forall x\in\RR^{d}\,.
\end{equation*}

Let $R>0$ be arbitrary, and select a Markov control $v_{R}$ satisfying
\begin{equation*}
v_{R}(x) \;=\;\begin{cases}
\Argmin_{u\in\Act}\, \sR_{\uplambda^{*}}(x,\nabla\varphi_{*}(x);u)
&\text{if~~}
\abs{x}<R\,,\\[2pt]
v^{*}(x)& \text{otherwise.}\end{cases}
\end{equation*}
It is clear that $v_{R}\in\Ussm$, and that if $\uppi_{R}$ denotes
the corresponding ergodic occupation measure, then we have $\uppi_{R}(r)<\infty$.
It follows by \eqref{E-Pois1} and the definition of $v_{R}$ that
\begin{equation}\label{E-Pois2}
\Lg^{v_{R}}\varphi_{*}(x) + g_{\updelta,\uplambda^{*}}\bigl(x,v_{R}(x)\bigr)
\;\le\; \uppi^{*}(g_{\updelta,\uplambda^{*}})\,,\quad
x\in\RR^{d}\,.
\end{equation}
By \eqref{E-Pois2} using \cite[Corollary~3.7.3]{book} we obtain
\begin{equation*}
\uppi_{R}\bigl(g_{\updelta,\uplambda^{*}}\bigr)
\;\le\;\uppi^{*}(g_{\updelta,\uplambda^{*}})\,.
\end{equation*}
However, since
$\uppi_{R}\bigl(g_{\updelta,\uplambda^{*}}\bigr)
\ge \uppi^{*}(g_{\updelta,\uplambda^{*}})$ by Lemma~\ref{L3.5},
it follows that we must have equality in \eqref{E-Pois2} a.e.\ in $\Rd$.
Therefore, since $R>0$ was arbitrary, we obtain \eqref{E-HJB-constr}.
By elliptic regularity, we have $\varphi_{*}\in\Cc^{2}(\Rd)$.
This proves part (a).

Continuing, note that by \eqref{EL3.3b} we have $\uppi^{*}(\Tilde{h})<\infty$,
and moreover
that $\sup_{\uppi\,\in\,\sH_{\beta}(\updelta)}\,\uppi(\Tilde{h})<\infty$
for all $\beta>0$.
Thus we can follow the approach in Section~3.5 of \cite{ABP14},
by considering the perturbed problem with running cost of the form
$g_{\updelta,\uplambda^{*}}+\varepsilon\Tilde{h}$ and then take
limits as $\varepsilon\searrow0$.
Parts (b)--(d) then follow as in Theorem~3.4 and Lemma~3.10 of \cite{ABP14}.
\end{proof}

Concerning uniqueness, the analogue of Theorem~3.5 in \cite{ABP14} holds, which we
quote next.
The proof follows that of \cite[Theorem~3.5]{ABP14} and is therefore omitted.

\begin{theorem}\label{T-unique}
Let the hypotheses of Theorem~\ref{T3.1} hold, and
$(\Hat{\varphi},\Hat\varrho)\in\Cc^{2}(\Rd)\times\RR$ be a solution of
\begin{equation}\label{E-HJB-hat2}
\min_{u\in\Act}\;\bigl[\Lg^{u}\Hat{\varphi}(x)+g_{\updelta,\uplambda^{*}}(x,u)\bigr]
\;=\;\Hat\varrho\,,
\end{equation}
such that $\Hat{\varphi}^{-}\in\sorder(\Lyap)$ and $\Hat{\varphi}(0)=0$.
Then the following hold:
\begin{itemize}
\item[\textup{(}a\textup{)}]
Any measurable selector $\Hat{v}$ from the minimizer of \eqref{E-HJB-hat2}
is in $\Ussm$ and $\uppi_{\Hat{v}}(g_{\updelta,\uplambda^{*}}) < \infty$.
\smallskip
\item[\textup{(}b\textup{)}]
If either $\Hat\varrho\le \uppi^{*}(g_{\updelta,\uplambda^{*}})$,
or  $\Hat{\varphi}\in\order\bigl(\min_{u\in\Act}\;\Tilde{h}(\cdot\,,u)\bigr)$,
then necessarily
$\Hat\varrho= \uppi^{*}(g_{\updelta,\uplambda^{*}})$,
and $\Hat{\varphi}=\varphi_{*}$.
\end{itemize}
\end{theorem}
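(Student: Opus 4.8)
The plan is to follow the proof of \cite[Theorem~3.5]{ABP14} with the running cost $g_{\updelta,\uplambda^{*}}$ playing the role of $r$ there. The reduction that makes this transfer work is that Hypothesis~\ref{HypA} holds for $r=\sum_{i=0}^{\Bar{k}}r_{i}$ (the standing assumption of this section), that $g_{\updelta,\uplambda^{*}}\ge-\sum_{i=1}^{\Bar{k}}\uplambda^{*}_{i}\updelta_{i}$ is bounded below, and that $g_{\updelta,\uplambda^{*}}\le C\,r$ for $C\df 1\vee\max_{i}\uplambda^{*}_{i}$; hence Lemma~\ref{L3.1} applies with the very same inf-compact $\Tilde{h}$ and yields $r\le\Tilde{h}$, $g_{\updelta,\uplambda^{*}}\le C\,\Tilde{h}$, and the drift bound $\Lg^{u}\Lyap\le 1-h\,\Ind_{\cH^{c}}+r\,\Ind_{\cH}$. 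Two facts are used throughout: $\uppi^{*}(\Tilde{h})<\infty$ (by \eqref{EL3.3b}, since $\uppi^{*}\in\sH(\updelta)$ has $\uppi^{*}(r_{0})<\infty$), and $\uppi^{*}(g_{\updelta,\uplambda^{*}})=\inf_{\uppi\in\eom}\uppi(g_{\updelta,\uplambda^{*}})$ (by \eqref{EE3.17} and Lemma~\ref{L3.5}(ii)). As in the proof of Theorem~\ref{T3.1}, the robust way to run the steps below is through the perturbed running cost $g_{\updelta,\uplambda^{*}}+\varepsilon\Tilde{h}$, which is inf-compact and so falls under the classical near-monotone theory, followed by the limit $\varepsilon\searrow0$.

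Part~(a). Fix a measurable selector $\Hat{v}$ from the minimizer of \eqref{E-HJB-hat2}, so that $\Lg^{\Hat{v}}\Hat{\varphi}+g_{\updelta,\uplambda^{*}}(\cdot,\Hat{v})=\Hat\varrho$ a.e.\ in $\Rd$. I would prove $\Hat{v}\in\Ussm$ by exhibiting a Foster--Lyapunov function built from $\Hat{\varphi}$, $\Lyap$ and $\Tilde{h}$: combining the Poisson identity for $\Hat{v}$ with the drift bound of Lemma~\ref{L3.1}(b), the lower bound on $g_{\updelta,\uplambda^{*}}$, and the inf-compactness of $h$ and $\Tilde{h}$ produces a drift inequality of Foster type outside a compact set, the one-sided bound $\Hat{\varphi}^{-}\in\sorder(\Lyap)$ being exactly what renders this function inf-compact up to lower-order terms. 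Positive recurrence of $X$ under $\Hat{v}$ together with $\uppi_{\Hat{v}}(\Tilde{h})<\infty$ then follows from standard recurrence estimates \cite{book}, whence $\uppi_{\Hat{v}}(g_{\updelta,\uplambda^{*}})\le C\,\uppi_{\Hat{v}}(\Tilde{h})<\infty$, which is the assertion of part~(a).

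Part~(b). Integrating the Poisson identity for $\Hat{v}$ against $\mu_{\Hat{v}}$ --- justified by the usual Dynkin/Fatou argument along $\uptau_{R}\wedge T$, $R,T\to\infty$, using $\Hat{\varphi}\ge-\Hat{\varphi}^{-}$, $\Hat{\varphi}^{-}\in\sorder(\Lyap)$ and the recurrence estimates of part~(a) --- yields $\uppi_{\Hat{v}}(g_{\updelta,\uplambda^{*}})=\Hat\varrho$, and hence $\uppi^{*}(g_{\updelta,\uplambda^{*}})\le\Hat\varrho$ by the recorded optimality of $\uppi^{*}$ over $\eom$. If $\Hat\varrho\le\uppi^{*}(g_{\updelta,\uplambda^{*}})$, equality follows at once. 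If instead $\Hat{\varphi}\in\order\bigl(\min_{u\in\Act}\Tilde{h}(\cdot,u)\bigr)$, then $\Hat{\varphi}\in L^{1}(\mu_{v^{*}})$ because $\uppi^{*}(\Tilde{h})<\infty$, so integrating the HJB inequality $\Lg^{v^{*}}\Hat{\varphi}+g_{\updelta,\uplambda^{*}}(\cdot,v^{*})\ge\Hat\varrho$ against $\mu_{v^{*}}$ (with $\int\Lg^{v^{*}}\Hat{\varphi}\,\D\mu_{v^{*}}=0$) gives $\uppi^{*}(g_{\updelta,\uplambda^{*}})\ge\Hat\varrho$, and again equality holds. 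So in both cases $\Hat\varrho=\uppi^{*}(g_{\updelta,\uplambda^{*}})$ and $\uppi_{\Hat{v}}(g_{\updelta,\uplambda^{*}})=\uppi^{*}(g_{\updelta,\uplambda^{*}})$, i.e.\ $\Hat{v}$ is optimal. For $\Hat{\varphi}=\varphi_{*}$: substituting $\Hat{v}$ into the HJB equation \eqref{E-HJB-constr} for $\varphi_{*}$ gives $\Lg^{\Hat{v}}\varphi_{*}+g_{\updelta,\uplambda^{*}}(\cdot,\Hat{v})\ge\uppi^{*}(g_{\updelta,\uplambda^{*}})$ everywhere, and integrating against $\mu_{\Hat{v}}$ while using $\uppi_{\Hat{v}}(g_{\updelta,\uplambda^{*}})=\uppi^{*}(g_{\updelta,\uplambda^{*}})$ forces equality $\mu_{\Hat{v}}$-a.e., hence everywhere by elliptic regularity. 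Thus $\Hat{\varphi}$ and $\varphi_{*}$ solve the same Poisson equation $\Lg^{\Hat{v}}\varphi+g_{\updelta,\uplambda^{*}}(\cdot,\Hat{v})=\uppi^{*}(g_{\updelta,\uplambda^{*}})$ with the same normalization $\varphi(0)=0$, and uniqueness of its solution within the class $\{\varphi:\varphi^{-}\in\sorder(\Lyap)\}$ --- obtained, as in \cite{ABP14}, via the $\varepsilon\Tilde{h}$-perturbation and limiting argument --- gives $\Hat{\varphi}=\varphi_{*}$.

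I expect the main obstacle to be part~(a): extracting a genuine Foster--Lyapunov drift inequality for the arbitrary selector $\Hat{v}$ out of the single one-sided control $\Hat{\varphi}^{-}\in\sorder(\Lyap)$ together with the structural inequalities of Lemma~\ref{L3.1} --- note that $g_{\updelta,\uplambda^{*}}$ itself is not inf-compact --- and then propagating the resulting integrability ($\uppi_{\Hat{v}}(\Tilde{h})<\infty$, and the $L^{1}(\mu_{\Hat{v}})$ control needed in the verification and uniqueness steps) cleanly through the argument; the perturbed-cost route is precisely what keeps these estimates under control, exactly as in the proof of Theorem~\ref{T3.1}.
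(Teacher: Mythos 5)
Your proposal is correct and takes essentially the same route as the paper, which in fact omits this proof entirely with the single remark that it ``follows that of \cite[Theorem~3.5]{ABP14}.'' Your sketch is a faithful reconstruction of that adaptation, and it identifies exactly the reductions that make the transfer to the constrained cost work: $g_{\updelta,\uplambda^{*}}$ is bounded below and dominated by a constant multiple of $r$, so the same $\Tilde{h}$ from Lemma~\ref{L3.1}, the finiteness of $\uppi^{*}(\Tilde{h})$ via \eqref{EL3.3b}, the optimality of $\uppi^{*}$ over all of $\eom$ from Lemma~\ref{L3.5}, and the $\varepsilon\Tilde{h}$-perturbation already used in the proof of Theorem~\ref{T3.1} all carry over unchanged.
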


We finish this section by presenting an analogues to
\cite[Theorems~4.1 and 4.2]{ABP14}
(see also Lemma~\ref{L3.2})
for the ergodic control problem under constraints.
Let $v_{0}\in\Ussm$ be any control such that $\uppi_{v_{0}}(r)<\infty$.
For $j=0,1,\dotsc,\Bar{k}$, define the truncated running costs $r^{R}_{j}$
relative to $r_{j}$ as in \eqref{rR}.
We consider the ergodic control problem under constraints
in \eqref{E-ec-c1}--\eqref{E-ec-c2}
for the family of controlled diffusions, parameterized by $R\in\NN$,
given by \eqref{E-sdeR}
with running costs $r_{i}\equiv r^{R}_{j}(x,u)$, $j=0,1,\dotsc,\Bar{k}$.
Recall that, as defined in Section~\ref{sec-structural},
$\eom(R)$ denotes the set of ergodic occupation
measures of \eqref{E-sdeR}.
We also let $\sH(\updelta;R)$, $\sH^{\mathrm{o}}(\updelta;R)$ be defined
as in \eqref{E-sH} relative to the set $\eom(R)$.
We have the following theorem.

\begin{theorem}\label{T3.3}
Suppose that $\Hat\updelta\in (0,\infty)^{\Bar{k}}$ is feasible,
and that $\varphi_{0}$ defined in \eqref{E-vphi0} satisfies
$\varphi_{0}\in\order\bigl(\min_{u\in\Act}\;\Tilde{h}(\cdot\,,u)\bigr)$.
Then the following are true.
\begin{itemize}
\item[(a)]
There exists $R_{0}>0$
such that
\begin{equation*}
\sH^{\mathrm{o}}(\Hat\updelta;R)
\cap\{\uppi\in\eom(R)\;\colon\, \uppi(r_{0})<\infty\}
\;\ne\;\varnothing\qquad\forall\,R\ge R_{0}\,.
\end{equation*}
\item[(b)]
It holds that
\begin{equation*}
\inf_{\uppi\,\in\,\sH(\Hat\updelta;R)}\;\uppi(r_{0})
\;\xrightarrow[R\to\infty]{}\;
\inf_{\uppi\,\in\,\sH(\Hat\updelta)}\;\uppi(r_{0}) \,.
\end{equation*}
\end{itemize}
\end{theorem}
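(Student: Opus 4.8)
The plan is to mirror the proof of Theorems~4.1 and 4.2 of \cite{ABP14} (cf.\ Lemma~\ref{L3.2}) and to reduce both assertions to a single \emph{truncation convergence} statement: for every $v\in\Ussm$ with $\uppi_{v}(r)<\infty$, if $v_{R}\df v\,\Ind_{B_{R}}+v_{0}\,\Ind_{B_{R}^{c}}$ denotes the control obtained by splicing $v$ inside $B_{R}$ with the fixed control $v_{0}$ outside, then $v_{R}\in\Usm(R,v_{0})\cap\Ussm$ for all large $R$, $\uppi_{v_{R}}\to\uppi_{v}$ in $\cP(\RR^{d}\times\Act)$, and $\uppi_{v_{R}}(r^{R}_{j})\to\uppi_{v}(r_{j})$ for each $j=0,1,\dotsc,\Bar{k}$. (That $v_{R}$ is stable is clear since it coincides with $v_{0}\in\Ussm$ outside a ball.) The rest of the bookkeeping is handled by one elementary remark: if $v$ agrees with $v_{0}$ on $\Bar{B}_{R}^{c}$, then by \eqref{rR} we have $r^{R}_{j}(x,v(x))=r_{j}(x,v(x))$ for \emph{every} $x\in\RR^{d}$, hence $\uppi_{v}(r^{R}_{j})=\uppi_{v}(r_{j})$. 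Consequently, on $\eom(R)$ the truncated objective and constraint functionals coincide with the untruncated ones; the sets $\sH(\Hat\updelta;R)$ are nondecreasing in $R$ with $\bigcup_{R}\sH(\Hat\updelta;R)\subset\sH(\Hat\updelta)$; and $\varrho_{0}\df\inf_{\uppi\in\sH(\Hat\updelta)}\uppi(r_{0})<\infty$ by feasibility.

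Granting the truncation convergence, part~(a) is quick. Feasibility of $\Hat\updelta$ provides $\uppi'\in\sH^{\mathrm{o}}(\Hat\updelta)$ with $\uppi'(r_{0})<\infty$, and since $\uppi'(r_{j})<\Hat\updelta_{j}<\infty$ for $j\ge1$ we also have $\uppi'(r)<\infty$. Writing $\uppi'=\uppi_{v'}$ with $v'\in\Ussm$ and splicing, truncation convergence gives $\uppi_{v'_{R}}(r_{j})\to\uppi'(r_{j})$ for all $j$; hence for all $R$ beyond some $R_{0}$ one has $\uppi_{v'_{R}}(r_{j})<\Hat\updelta_{j}$ for $j\ge1$ and $\uppi_{v'_{R}}(r_{0})<\infty$, i.e.\ $\uppi_{v'_{R}}$ witnesses $\sH^{\mathrm{o}}(\Hat\updelta;R)\cap\{\uppi\in\eom(R)\colon\uppi(r_{0})<\infty\}\neq\varnothing$.

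For part~(b), the bound ``$\ge$'' is immediate from the remark above: $\inf_{\uppi\in\sH(\Hat\updelta;R)}\uppi(r^{R}_{0})=\inf_{\uppi\in\sH(\Hat\updelta;R)}\uppi(r_{0})\ge\varrho_{0}$, and the left-hand side is nonincreasing in $R$ since the sets $\sH(\Hat\updelta;R)$ increase. For the reverse bound I would use a Slater-type perturbation (in the spirit of the proof of Lemma~\ref{L3.4}): fix $\varepsilon>0$, take $\uppi^{*}\in\sH(\Hat\updelta)$ attaining $\varrho_{0}$ (Lemma~\ref{L3.3}) and $\uppi'\in\sH^{\mathrm{o}}(\Hat\updelta)$ with $\uppi'(r_{0})<\infty$, and set $\uppi^{\theta}\df(1-\theta)\uppi^{*}+\theta\uppi'\in\eom$. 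Then $\uppi^{\theta}(r_{j})\le(1-\theta)\Hat\updelta_{j}+\theta\uppi'(r_{j})<\Hat\updelta_{j}$ for $j\ge1$, while $\uppi^{\theta}(r_{0})=\varrho_{0}+\theta(\uppi'(r_{0})-\varrho_{0})\le\varrho_{0}+\varepsilon$ for $\theta$ small; also $\uppi^{\theta}(r)<\infty$. Writing $\uppi^{\theta}=\uppi_{v^{\theta}}$ and splicing, truncation convergence yields $\uppi_{v^{\theta}_{R}}(r_{j})\to\uppi^{\theta}(r_{j})$ for all $j$, so for $R$ large we have $\uppi_{v^{\theta}_{R}}\in\sH(\Hat\updelta;R)$ and $\uppi_{v^{\theta}_{R}}(r^{R}_{0})=\uppi_{v^{\theta}_{R}}(r_{0})\le\varrho_{0}+2\varepsilon$. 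Letting $R\to\infty$ and then $\varepsilon\downarrow0$ gives ``$\le$'', and (b) follows.

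The real work lies in the truncation convergence statement, and this is exactly where the standing hypothesis $\varphi_{0}\in\order(\min_{u\in\Act}\Tilde{h}(\cdot\,,u))$ is used; I would argue as in \cite[Sec.~4]{ABP14}. Via the Poisson equation \eqref{E-vphi0} for $v_{0}$, the growth bound on $\varphi_{0}$, and the associated Dynkin/stopping-time estimates, one obtains a bound on $\uppi_{v_{R}}(\Tilde{h})$ uniform in $R$ — hence tightness of $\{\uppi_{v_{R}}\}$ — together with the \emph{upper} estimate $\limsup_{R\to\infty}\uppi_{v_{R}}(r^{R})\le\uppi_{v}(r)$; since $v_{R}\to v$ in the topology of Markov controls and $\eom$ is closed, standard arguments identify every subsequential limit of $\uppi_{v_{R}}$ with $\uppi_{v}$, so $\uppi_{v_{R}}\to\uppi_{v}$ and $\uppi_{v_{R}}(r^{R})\to\uppi_{v}(r)$. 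Finally, since $r=\sum_{j}r_{j}$ with each $r_{j}$ nonnegative and continuous and $\uppi_{v_{R}}(r^{R}_{j})=\uppi_{v_{R}}(r_{j})$, weak convergence gives $\liminf_{R}\uppi_{v_{R}}(r_{j})\ge\uppi_{v}(r_{j})$ for each $j$, and combining this with $\sum_{j}\uppi_{v_{R}}(r_{j})\to\sum_{j}\uppi_{v}(r_{j})<\infty$ forces $\uppi_{v_{R}}(r_{j})\to\uppi_{v}(r_{j})$ for every $j$. I expect the sharp upper estimate $\limsup_{R}\uppi_{v_{R}}(r^{R})\le\uppi_{v}(r)$ — the one point where the $\order$-hypothesis on $\varphi_{0}$ is indispensable — to be the main obstacle; everything else reduces to the convexity of $\eom$ and the nesting of the truncated feasible sets.
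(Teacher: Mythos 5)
Your reduction of the theorem to a single ``truncation convergence'' lemma is cleanly organized, and the elementary bookkeeping is correct: for $\uppi\in\eom(R)$ the underlying control equals $v_{0}$ outside $B_{R}$, so $\uppi(r_{j}^{R})=\uppi(r_{j})$; the sets $\sH(\Hat\updelta;R)$ are nested inside $\sH(\Hat\updelta)$, which gives the lower bound in (b); the Slater perturbation $(1-\theta)\uppi^{*}+\theta\uppi'$ is a valid substitute for the paper's appeal to Lemma~\ref{L3.4}; and deducing $\uppi_{v_{R}}(r_{j})\to\uppi_{v}(r_{j})$ for each $j$ from $\liminf_{R}\uppi_{v_{R}}(r_{j})\ge\uppi_{v}(r_{j})$ together with convergence of the finite sum is sound.

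The gap is the lemma itself. You assert that for an arbitrary $v\in\Ussm$ with $\uppi_{v}(r)<\infty$ the spliced controls $v_{R}$ satisfy $\sup_{R}\uppi_{v_{R}}(\Tilde{h})<\infty$ and $\limsup_{R}\uppi_{v_{R}}(r^{R})\le\uppi_{v}(r)$, ``arguing as in \cite[Section~4]{ABP14}.'' But Theorems~4.1 and 4.2 of \cite{ABP14} (Lemma~\ref{L3.2} here) do not prove this: they concern the \emph{optimal} ergodic occupation measures of the truncated problems, and the convergence of the truncated optimal values is obtained through the truncated HJB equations, the barrier $V_{R}\le C_{0}+2\varphi_{0}$, and the uniqueness theorem for the limiting HJB in the class $\order\bigl(\min_{u}\Tilde{h}(\cdot\,,u)\bigr)$ --- not by splicing a fixed control and estimating its cost. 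For a fixed $v$, neither $\varphi_{0}$ (whose Poisson inequality holds only where the control is $v_{0}$, i.e.\ off $B_{R}$) nor the Poisson solution for $v$ (controlled only on $B_{R}$) is a Lyapunov function for $v_{R}$ on all of $\RR^{d}$, and the only a priori bound available, $\uppi_{v_{R}}(\Tilde{h})\le k_{0}\bigl(1+\uppi_{v_{R}}(r)\bigr)$ from \eqref{E-key}, is circular for your purposes. The paper avoids this entirely: it tightens the constraints to a feasible $\updelta^{\varepsilon}<\Hat\updelta$ via Lemma~\ref{L3.4}, passes to the Lagrangian $g_{\Tilde\varepsilon,\updelta^{\varepsilon},\uplambda^{*}}$ of Lemma~\ref{L3.5} with the cost perturbed by $\Tilde\varepsilon\,\Tilde{h}$ (this perturbation is what furnishes the control of the constraint functionals $r_{i}\le\Tilde{h}$ in the limit), and then applies Lemma~\ref{L3.2} to the optimal measures $\Bar\uppi^{R}$ of the truncated \emph{unconstrained} Lagrangian problems; both (a) and (b) are read off from $\limsup_{R}\Bar\uppi^{R}(r_{i}^{R})\le\updelta_{i}^{\varepsilon}<\Hat\updelta_{i}$ and $\Bar\uppi^{R}(r_{\Tilde\varepsilon})\to\inf_{\uppi\in\sH(\updelta^{\varepsilon})}\uppi(r_{\Tilde\varepsilon})$. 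To keep your architecture you would have to prove the splicing lemma outright, which looks at least as hard as the theorem; replacing the spliced controls by the optimal truncated-Lagrangian measures repairs the argument, but at that point it coincides with the paper's proof.
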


\begin{proof}
Let $\varepsilon>0$ be given.
By Lemma~\ref{L3.4}, for all sufficiently small $\varepsilon>0$, there exist
$\updelta_{i}^{\varepsilon}<\Hat\updelta_{i}$,
$i=1,\dotsc,\Bar{k}$, such that $\updelta^{\varepsilon}$ is feasible and
\begin{equation}\label{ET3.3a}
\inf_{\uppi\,\in\,\sH(\updelta^{\varepsilon})}\;\uppi(r_{0})
\;\le\; \inf_{\uppi\,\in\,\sH(\Hat\updelta)}\;\uppi(r_{0})
+ \tfrac{\varepsilon}{4}\,.
\end{equation}
For $\Tilde\varepsilon>0$, let
$r_{\Tilde\varepsilon}\df r_{0}+\Tilde\varepsilon\,\Tilde{h}$.
By \eqref{E-key} we have 
\begin{equation*}
\uppi(r_{0})\;\le\; \uppi(r_{\Tilde\varepsilon})\;\le\;
(1+k_{0}\Tilde\varepsilon)\,\uppi(r_{0})+
k_{0}\Tilde\varepsilon+ k_{0}\Tilde\varepsilon\,\sum_{i=1}^{\Bar{k}}\updelta_{i}
\qquad\forall\, \uppi\in\sH(\updelta)\,.
\end{equation*}
Therefore, for any $\varepsilon>0$,
we can choose $\Tilde\varepsilon>0$
small enough so that
\begin{equation}\label{ET3.3b}
\inf_{\uppi\,\in\,\sH(\updelta^{\varepsilon})}\;\uppi(r_{\Tilde\varepsilon})
\;\le\;\inf_{\uppi\,\in\,\sH(\updelta^{\varepsilon})}\;\uppi(r_{0})
+ \tfrac{\varepsilon}{4}\,.
\end{equation}
Let
\begin{equation*}
g_{\Tilde\varepsilon,\updelta^\varepsilon,\uplambda}(x,u) \;\df\;
r_{\Tilde\varepsilon}(x,u) + \sum_{i=1}^{\Bar{k}}
\uplambda_{i}\bigl(r_{i}(x,u)-\updelta_{i}^\varepsilon\bigr)\,.
\end{equation*}
By Lemmas~\ref{L3.3} and \ref{L3.5}
there exist $\uplambda^{*}\in\RR^{\Bar{k}}_{+}$ and
$\uppi^{*}\in\sH(\updelta^{\varepsilon})$
such that
\begin{equation}\label{ET3.3c}
\uppi^{*}(r_{\Tilde\varepsilon})\;=\;
\inf_{\uppi\,\in\,\sH(\updelta^{\varepsilon})}\;\uppi(r_{\Tilde\varepsilon}) \;=\;
\inf_{\uppi\,\in\,\sH(\updelta^{\varepsilon})}\;
\uppi(g_{\Tilde\varepsilon,\updelta^\varepsilon,\uplambda^{*}})
\;=\;\uppi^{*}(g_{\Tilde\varepsilon,\updelta^\varepsilon,\uplambda^{*}})\,.
\end{equation}

Define the truncated running cost
$g_{\Tilde\varepsilon,\updelta^\varepsilon,\uplambda^{*}}^{R}$
relative to $g_{\Tilde\varepsilon,\updelta^\varepsilon,\uplambda^{*}}$
as in \eqref{rR}.
Since $\uppi_{v_{0}}(g_{\Tilde\varepsilon,\updelta^\varepsilon,\uplambda^{*}}^{R})$
is finite, the hypotheses of Lemma~\ref{L3.2} are satisfied.
Let $\Bar\eom(R)$ denote the collection of ergodic occupation measures
in $\eom(R)$ which are optimal for
for $g_{\Tilde\varepsilon,\updelta^\varepsilon,\uplambda^{*}}^{R}$.
Therefore, it follows by Lemma~\ref{L3.2} that
$\{\Bar\eom(R)\;\colon\,R>0\}$
is tight, and any limit point of $\Bar\uppi^{R}\in\Bar\eom(R)$ as $R\to\infty$
satisfies \eqref{ET3.3c}.
Since $r_{i}\le\Tilde{h}$ it follows by dominated convergence
that 
\begin{equation*}
\limsup_{R\to\infty}\;\Bar\uppi^{R}(r^{R}_{i})\;\le\;\updelta^{\varepsilon}_i\;<\;
\Hat\updelta_i\,, \quad i \;=\;1,\dotsc,\Bar{k} \,.
\end{equation*} 
which establishes part (a).

Therefore, there exists $R_{0}>0$ such that $\Bar\uppi^{R}\in\sH(\Hat\updelta,R)$
for all $R>R_{0}$, and by \eqref{ET3.3c},
\begin{equation}\label{ET3.3d}
\Bar\uppi^{R}(r_{\Tilde\varepsilon})\;\le\;
\inf_{\uppi\,\in\,\sH(\updelta^{\varepsilon})}\;\uppi(r_{\Tilde\varepsilon})
+\tfrac{\varepsilon}{2}\qquad \forall\,R>R_{0}\,.
\end{equation}
Combining \eqref{ET3.3a}--\eqref{ET3.3b} and \eqref{ET3.3d} we obtain
\begin{equation*}
\Bar\uppi^{R}(r_{0})\;\le\;
\Bar\uppi^{R}(r_{\Tilde\varepsilon})\;\le\;
\inf_{\uppi\,\in\,\sH(\Hat\updelta)}\;\uppi(r_{0}) +\varepsilon\,,
\end{equation*}
which establishes part (b).
The proof is complete.
\end{proof}

Let $\updelta\in (0,\infty)^{\Bar{k}}$ and $R>0$.
Provided $\sH^{\mathrm{o}}(\updelta;R)\ne\varnothing$ we denote
by $\uplambda^{*}_{R}=(\uplambda_{1,R}\,\dotsc,\uplambda_{\Bar{k},R})\transp
\in\RR^{\Bar{k}}_{+}$ any such vector satisfying
\begin{equation*}
\inf_{\uppi\,\in\,\sH(\updelta,R)}\;\uppi(r_{0}) \;=\;
\inf_{\uppi\,\in\,\eom(R)}\;\uppi(g_{\updelta,\uplambda^{*}_{R}})\,,
\end{equation*}
and by $\uppi^{*}_{R}$, any member of $\sH(\updelta,R)$ that attains this
infimum.
It follows by Theorem~\ref{T3.3}~(a) that, provided
$\sH^{\mathrm{o}}(\updelta)\ne\varnothing$, then
$\sH^{\mathrm{o}}(\updelta;R)\ne\varnothing$ for all
$R$ sufficiently large.
Clearly, $\uppi^{*}_{R}$ satisfies \eqref{EL3.3b} and
$R\mapsto\uppi^{*}_{R}(r_{0})$ is nonincreasing.
Therefore $\{\uppi^{*}_{R}\}$ is a tight family.
It then follows by Theorem~\ref{T3.3}~(b) that
any limit point
of $\uppi^{*}_{R}$ as $R\to\infty$ attains the minimum
of $\uppi\to\uppi(r_{0})$ in $\sH(\updelta)$.
Concerning the convergence of the solutions to the associated
HJB equations we have the following.

\begin{theorem}\label{T3.4}
Suppose that $\updelta\in (0,\infty)^{\Bar{k}}$ is feasible.
Let $\Lg_{R}^{u}$ denote the controlled extended generator corresponding
to the diffusion in \eqref{E-sdeR},
$\varphi_{0}$ be as in \eqref{E-vphi0},
and $\uplambda^{*}_{R}$,
$g^{R}_{\updelta,\uplambda^{*}}$ defined as in \eqref{rR} relative
to the running cost $g_{\updelta,\uplambda^{*}}$,
$\uppi^{*}_{R}$ be as defined in the previous paragraph.
Then there exists $R_{0}>0$ such that for all $R>R_{0}$
the HJB equation
\begin{equation}\label{E-HJB-n}
\min_{u\in\Act}\;\bigl[\Lg_{R}^{u}V_{R}(x)
+g^{R}_{\updelta,\uplambda^{*}_{R}}(x,u)\bigr]
\;=\;\uppi^{*}_{R}(r_{0})\,,
\end{equation}
has a solution $V_{R}$ in $\Sobl^{2,p}(\RR^{d})$,
for any $p>d$, with $V_{R}(0)=0$,
and such that the restriction of $V_{R}$ on $B_{R}$ is in $\Cc^{2}(B_{R})$.
Also, the following hold:
\begin{itemize}
\item[\textup{(}i\textup{)}]
there exists a constant $C_{0}$, independent
of $R$, such
that $V_{R} \le C_{0}+2\varphi_{0}$
for all $R>R_{0}$;
\smallskip
\item[\textup{(}ii\textup{)}]
$(V_{R})^{-}\in\sorder(\Lyap+\varphi_{0})$ uniformly over $R>R_{0}$;
\smallskip
\item[\textup{(}iii\textup{)}]
Every $\uppi^{*}_{R}$ corresponds to
a stationary Markov control $v\in\Ussm$
that satisfies 
\begin{equation}\label{E-3.13new}
\min_{u\in\Act}\;
\bigl[b^{R}(x,u)\cdot \nabla V_{R}(x)
+ g^{R}_{\updelta,\uplambda^{*}_{R}}(x,u)\bigr]\;=\;
b\bigl(x,v(x)\bigr)\cdot \nabla V_{R}(x)
+ g_{\updelta,\uplambda^{*}_{R}}(x,v(x)) \,,\quad
\text{a.e.}~x\in\RR^{d}\,.
\end{equation}
\end{itemize}
Let $\varphi_{*}$ and $\uplambda^{*}$ be as in Theorem~\ref{T3.1}.
Then, under the additional hypothesis that
\begin{equation*}
\varphi_{0}\in\order\bigl(\min_{u\in\Act}\;\Tilde{h}(\cdot\,,u)\bigr)\,,
\end{equation*}
for every sequence $R\nearrow\infty$ there exists a subsequence
along which it holds that $V_{R}\to\varphi_{*}$ and
$\uplambda^{*}_{R}\to\uplambda^{*}$.
Also, if $\Hat{v}_{R}$ is a measurable selector from the minimizer
of \eqref{E-3.13new} then any limit point of $\Hat{v}_{R}$
in the topology of Markov controls as $R\to\infty$ is a measurable
selector from the minimizer of \eqref{E-3.13}.
\end{theorem}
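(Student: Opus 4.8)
I would prove Theorem~\ref{T3.4} by running the spatial truncation machinery of \cite[Theorems~4.1 and 4.2]{ABP14} — already adapted to constraints in Lemma~\ref{L3.2} and Theorems~\ref{T3.1} and \ref{T3.3} — while carefully tracking the Lagrange multipliers. First I would fix $R$ large enough that $\sH^{\mathrm{o}}(\updelta;R)\neq\varnothing$, which is possible by Theorem~\ref{T3.3}(a). For such $R$ the truncated diffusion \eqref{E-sdeR} is uniformly stable, since $\eom(R)$ is compact by Lemma~\ref{L3.2}(i); hence Lemmas~\ref{L3.3} and \ref{L3.5} and Theorem~\ref{T3.1} apply verbatim to the truncated constrained problem (with $\eom$ replaced by $\eom(R)$, $g_{\updelta,\uplambda}$ by its truncation $g^{R}_{\updelta,\uplambda}$, and recalling that $\uppi(r_{j})=\uppi(r^{R}_{j})$ for every $\uppi\in\eom(R)$). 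This produces an optimal $\uppi^{*}_{R}\in\sH(\updelta;R)$, a multiplier $\uplambda^{*}_{R}\in\RR^{\Bar{k}}_{+}$ with $\inf_{\sH(\updelta;R)}\uppi(r_{0})=\inf_{\eom(R)}\uppi(g^{R}_{\updelta,\uplambda^{*}_{R}})=\uppi^{*}_{R}(g^{R}_{\updelta,\uplambda^{*}_{R}})=\uppi^{*}_{R}(r_{0})$ (the last equality by complementary slackness, Lemma~\ref{L3.5}(ii)), and a solution $V_{R}\in\Sobl^{2,p}(\RR^{d})$ of \eqref{E-HJB-n} with $V_{R}(0)=0$, which is in $\Cc^{2}(B_{R})$ because the data of \eqref{E-sdeR} are smooth there. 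Part (iii) is then exactly Theorem~\ref{T3.1}(c) applied to the truncated model.

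The step I expect to be the main obstacle, and the only one genuinely new relative to \cite{ABP14}, is a bound on $\uplambda^{*}_{R}$ that is uniform in $R$; this is a uniform Slater argument. By the construction in the proof of Theorem~\ref{T3.3}(a) (fixing one sufficiently small parameter there) one obtains, for all large $R$, a measure $\uppi'_{R}\in\eom(R)$ with $\uppi'_{R}(r_{i})\le\updelta_{i}-\epsilon_{0}$ for $i=1,\dotsc,\Bar{k}$, where $\epsilon_{0}>0$ does not depend on $R$, and with $\uppi'_{R}(r_{0})\le C$ uniformly in $R$. Inserting $\uppi'_{R}$ into the saddle-point inequality of Lemma~\ref{L3.5}(ii) for the truncated problem gives
\begin{equation*}
0\;\le\;\uppi^{*}_{R}(r_{0})\;=\;\uppi^{*}_{R}(g^{R}_{\updelta,\uplambda^{*}_{R}})
\;\le\;\uppi'_{R}(g^{R}_{\updelta,\uplambda^{*}_{R}})
\;=\;\uppi'_{R}(r_{0})+\sum_{i=1}^{\Bar{k}}\uplambda^{*}_{i,R}\bigl(\uppi'_{R}(r_{i})-\updelta_{i}\bigr)
\;\le\;C-\epsilon_{0}\sum_{i=1}^{\Bar{k}}\uplambda^{*}_{i,R}\,,
\end{equation*}
whence $\sum_{i}\uplambda^{*}_{i,R}\le C/\epsilon_{0}$ for all large $R$. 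With this uniform multiplier bound, one has $g^{R}_{\updelta,\uplambda^{*}_{R}}\le C'(1+\Tilde{h})$ for a constant $C'$ independent of $R$ (using $r_{i}\le\Tilde{h}$ and $r^{R}_{i}\le\Tilde{h}$), and the argument of \cite[Theorem~4.1]{ABP14} — comparing the stochastic representation of $V_{R}$ under the control $v_{0}$ (Theorem~\ref{T3.1}(d) for the truncated model) with the Poisson equation \eqref{E-vphi0} for $\varphi_{0}$, suitably scaled — carries over to yield the uniform estimates $V_{R}\le C_{0}+2\varphi_{0}$ in (i) and $(V_{R})^{-}\in\sorder(\Lyap+\varphi_{0})$ uniformly over $R>R_{0}$ in (ii).

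Finally, for the convergence statement I would argue as in \cite[Theorem~4.2]{ABP14}. The bounds (i)--(ii) together with interior elliptic estimates for \eqref{E-HJB-n} give a uniform local $\Sobl^{2,p}$ bound on $V_{R}$, so along a subsequence $V_{R}\to\Hat\varphi$ in $\Ccl^{1,\alpha}$ and weakly in $\Sobl^{2,p}$, $\uplambda^{*}_{R}\to\Hat\uplambda$, $\uppi^{*}_{R}\to\uppi^{\infty}$ (the family $\{\uppi^{*}_{R}\}$ being tight, as noted in the paragraph preceding the theorem), and $\uppi^{*}_{R}(r_{0})\to\inf_{\sH(\updelta)}\uppi(r_{0})$ by Theorem~\ref{T3.3}(b). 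Since $b^{R}\to b$ and $g^{R}_{\updelta,\uplambda^{*}_{R}}\to g_{\updelta,\Hat\uplambda}$ locally uniformly, passing to the limit in \eqref{E-HJB-n} shows $\min_{u\in\Act}[\Lg^{u}\Hat\varphi(x)+g_{\updelta,\Hat\uplambda}(x,u)]=\inf_{\sH(\updelta)}\uppi(r_{0})$ for all $x$. Lower semicontinuity of $\uppi\mapsto\uppi(r_{i})$ gives $\uppi^{\infty}\in\sH(\updelta)$ attaining $\inf_{\sH(\updelta)}\uppi(r_{0})$, and passing to the limit in the inequalities of Lemma~\ref{L3.5}(ii) identifies $\Hat\uplambda$ as an optimal multiplier; set $\uplambda^{*}\df\Hat\uplambda$ and let $\varphi_{*}$ be the associated solution from Theorem~\ref{T3.1}. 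Under the additional hypothesis $\varphi_{0}\in\order(\min_{u\in\Act}\Tilde{h}(\cdot\,,u))$, the uniform bounds (i)--(ii) give $\Hat\varphi\in\order(\Lyap)$ and $\Hat\varphi^{-}\in\sorder(\Lyap)$, so Theorem~\ref{T-unique} forces $\Hat\varphi=\varphi_{*}$ and the limiting constant to equal $\uppi^{*}(g_{\updelta,\uplambda^{*}})$. The claim on measurable selectors follows because the functions $u\mapsto b^{R}(x,u)\cdot\nabla V_{R}(x)+g^{R}_{\updelta,\uplambda^{*}_{R}}(x,u)$ converge, locally uniformly in $x$ and uniformly in $u\in\Act$, to $u\mapsto b(x,u)\cdot\nabla\varphi_{*}(x)+g_{\updelta,\uplambda^{*}}(x,u)$, so any limit point of selectors $\Hat v_{R}$ of the former in the topology of Markov controls is a selector of the latter, i.e.\ satisfies \eqref{E-3.13}. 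Everything except the uniform multiplier bound of the second paragraph is a bookkeeping adaptation of \cite{ABP14}.
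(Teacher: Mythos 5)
Your proposal follows the same route as the paper's (very terse) proof: apply the spatial-truncation and perturbation machinery of \cite[Theorems~4.1 and 4.2]{ABP14}, via the perturbed running cost $g_{\updelta,\uplambda^{*}_{R}}+\varepsilon\Tilde{h}$, to the truncated constrained problem and then pass to the limit as $R\to\infty$. The one ingredient you supply that the paper leaves implicit --- the uniform-in-$R$ bound $\sum_{i}\uplambda^{*}_{i,R}\le C/\epsilon_{0}$, obtained by feeding a uniform Slater point (extracted from the proof of Theorem~\ref{T3.3}\,(a)) into the saddle-point inequality of Lemma~\ref{L3.5}\,(ii) --- is precisely where the constrained case genuinely departs from \cite{ABP14}, and your argument for it is correct; without that bound neither the uniform estimates in (i)--(ii) nor the subsequential convergence $\uplambda^{*}_{R}\to\uplambda^{*}$ would go through.
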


\begin{proof}
We can start from the perturbed problem with running cost of the form
$g_{\updelta,\uplambda^{*}_{R}}+\varepsilon\Tilde{h}$ to
establish \eqref{E-3.13new} in Section~3.5 of \cite{ABP14},
and then take limits as $\varepsilon\searrow0$.
Parts (i) and (ii) can be established by following the proof
of \cite[Theorem~4.1]{ABP14}.
Convergence to \eqref{E-3.13} as $R\to\infty$ follows as in the proof
of \cite[Theorem~4.2]{ABP14}.
\end{proof}

\medskip
\section{Recurrence Properties of the Controlled Diffusions}   \label{sec-stabilizability}

In this section, we show that the limiting diffusions for a multiclass
multi-pool network satisfy Hypothesis~\ref{HypA} relative
to the running cost in \eqref{E-cost} for any
value of the parameters.
Also, provided $\gamma\ne0$, Hypothesis~\ref{HypB} is also satisfied. 
The proofs rely on a recursive
leaf elimination algorithm which we introduce next. 

\subsection{A leaf elimination algorithm and drift representation}
\label{sec-leaf-alg}
We now present a leaf elimination algorithm and prove some properties. 
Recall the linear map $G$ defined in \eqref{diff-map} and the associated matrix
$\Psi$ in \eqref{Psi}, and also the map $\widehat{G}$ defined in \eqref{diff-map2}. 

\begin{definition}
Let $\cG\bigl(\cI\cup\cJ,\cE,(\alpha,\beta)\bigr)$ denote the
labeled graph, whose nodes are labeled by $(\alpha,\beta)$,
i.e., each node $i\in\cI$ has the label $\alpha_{i}$, and 
each node $j\in\cJ$ has the label $\beta_{j}$.
The graph $\cG$ is a tree and there is a one to one correspondence
between this graph and the matrix $\Psi=\Psi(\alpha,\beta)$
defined in \eqref{Psi}.
We denote this correspondence by $\Psi\sim \cG$.

Let $\Psi^{(-i)}$ denote the $(I-1)\times J$ submatrix of $\Psi$
obtained after eliminating the $i^\text{th}$ row of $\Psi$.
Similarly, $\Psi_{(-j)}$ is the $I\times (J-1)$ submatrix resulting
after the elimination of the $j^\text{th}$ column.

If $\him\in\cI$ is a leaf of
$\cG\bigl(\cI\cup\cJ,\cE,(\alpha,\beta)\bigr)$,  we let $j_{\him}\in\cJ$
denote the unique node such that $(\him,j_{\him})\in\cE$
and define
\begin{equation*}
(\alpha,\beta)^{(-\him)}\;\df\; \bigl(\alpha_{1},\dotsc,\alpha_{\him-1},
\alpha_{\him+1},\dotsc,\alpha_{I},\beta_{1},\dotsc,\beta_{j_{\him}-1},
\beta_{j_{\him}}-\alpha_{\him},\beta_{j_{\him}+1},
\dotsc,\beta_{J}\bigr)\,, 
\end{equation*}
i.e., $(\alpha,\beta)^{(-\him)}\in\RR^{I-1+J}$ is the vector of
parameters obtained after removing $\alpha_{\him}$ and replacing
$\beta_{j_{\him}}$ with $\beta_{j_{\him}}-\alpha_{\him}$.
Similarly, if $\hjm\in\cJ$ is a leaf, we define $i_{\hjm}$
and $(\alpha,\beta)_{(-\hjm)}$ in a completely analogous manner. 
\hfill $\Box$
\end{definition}

\begin{lemma}
If $\him\in\cI$ and/or
$\hjm\in\cJ$ are leafs of $\cG\bigl(\cI\cup\cJ,\cE,(\alpha,\beta)\bigr)$,
then
\begin{align*}
\Psi^{(-\him)}(\alpha,\beta)&\;\sim\;
\cG\Bigl((\cI\setminus\{\him\})\cup\cJ,
\cE\setminus\{(\him,j_{\him})\},
(\alpha,\beta)^{(-\him)}\Bigr) \,,\\
\Psi_{(-\hjm)}(\alpha,\beta)&\;\sim\;
\cG\Bigl(\cI\cup(\cJ\setminus\{\hjm\}),
\cE\setminus\{(i_{\hjm},\hjm)\},
(\alpha,\beta)_{(-\hjm)}\Bigr)\,.
\end{align*}
\end{lemma}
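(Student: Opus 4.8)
The plan is to reduce everything to the \emph{uniqueness} of the flow matrix attached to a labeled bipartite tree, which is Proposition~A.2 of \cite{Atar-05a}, and then merely to verify that the prescribed submatrix of $\Psi(\alpha,\beta)$ satisfies the defining flow equations \eqref{diff-map} for the reduced tree. I will carry this out for a leaf $\him\in\cI$; the case of a leaf $\hjm\in\cJ$ is identical after the transposition $i\leftrightarrow j$, $\alpha\leftrightarrow\beta$, $\Psi\leftrightarrow\Psi\transp$, under which the system \eqref{diff-map} is invariant.

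First I would record two elementary structural facts. Since $\him$ is a leaf, deleting $\him$ together with its unique incident edge $(\him,j_{\him})$ from the tree $\cG$ leaves a connected graph with one fewer node and one fewer edge, hence again a tree; so $\cG'\df\cG\bigl((\cI\setminus\{\him\})\cup\cJ,\,\cE\setminus\{(\him,j_{\him})\},\,(\alpha,\beta)^{(-\him)}\bigr)$ is a bipartite tree. Moreover, the reduced label vector lies in the domain of the reduced map: on the customer side the total drops by $\alpha_{\him}$, and on the server side only $\beta_{j_{\him}}$ is modified, which also drops the total by $\alpha_{\him}$; thus the $e$-balance $e\cdot\alpha=e\cdot\beta$ is inherited, and Proposition~A.2 of \cite{Atar-05a} furnishes a \emph{unique} matrix $\Psi'$ with $\Psi'\sim\cG'$.

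Next I would check that $\Psi^{(-\him)}(\alpha,\beta)$ is exactly this matrix, by verifying the three conditions in \eqref{diff-map}. The zero pattern is inherited: for $i\ne\him$ the edges incident to $i$ are the same in $\cG$ and in $\cG'$ (the deleted edge meets neither), so $\psi_{ij}=0$ persists whenever $i\nsim j$ in $\cG'$. The row sums are inherited verbatim: $\sum_{j}\psi_{ij}=\alpha_i$ for every $i\ne\him$. For the column sums, the key observation is that, because $\him$ is a leaf adjacent only to $j_{\him}$, the zero condition gives $\psi_{\him j}=0$ for all $j\ne j_{\him}$, whence the row-sum identity for row $\him$ forces $\psi_{\him j_{\him}}=\alpha_{\him}$. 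Consequently, for $j\ne j_{\him}$ we get $\sum_{i\ne\him}\psi_{ij}=\sum_i\psi_{ij}=\beta_j$, while $\sum_{i\ne\him}\psi_{ij_{\him}}=\beta_{j_{\him}}-\psi_{\him j_{\him}}=\beta_{j_{\him}}-\alpha_{\him}$; these are precisely the server-side labels of $(\alpha,\beta)^{(-\him)}$. By the uniqueness in Proposition~A.2 of \cite{Atar-05a} we conclude $\Psi^{(-\him)}(\alpha,\beta)=\Psi'$, i.e., $\Psi^{(-\him)}(\alpha,\beta)\sim\cG'$; the second assertion then follows by the symmetry noted above. There is no genuine obstacle here: the only point requiring care is the bookkeeping of the column labels, which rests entirely on the identity $\psi_{\him j_{\him}}=\alpha_{\him}$ that being a leaf forces.
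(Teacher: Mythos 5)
Your proof is correct and follows essentially the same route as the paper's: both arguments hinge on the observation that, since $\him$ is a leaf, $\psi_{\him j_{\him}}=\alpha_{\him}$ is the unique nonzero entry of row $\him$, so that deleting that row leaves a matrix whose row and column sums match the labels $(\alpha,\beta)^{(-\him)}$, and the conclusion follows from the uniqueness of the map $G$ on trees. Your additional checks (that the reduced graph is again a tree and that the reduced labels remain in $D_G$) are implicit in the paper's one-line proof but are worth making explicit.
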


\begin{proof}
If $\him\in\cI$ is a leaf of
$\cG\bigl(\cI\cup\cJ,\cE,(\alpha,\beta)\bigr)$,
 then
$\psi_{\him,j_{\him}}$ is the unique non-zero
element in the $\him^\text{th}$ row of $\Psi(\alpha,\beta)$.
Therefore, the equivalence follows by the fact that the concatenation
of
$\Psi^{(-\him)}(\alpha,\beta)$ and row $\him$
of $\Psi(\alpha,\beta)$ has the same row and column sums
as $\Psi(\alpha,\beta)$.
Similarly if $\hjm\in\cJ$ is a leaf.
\end{proof}

\begin{definition}
In the interest of simplifying the notation, for a labeled
tree $\cG=\cG\bigl(\cI\cup\cJ,\cE,(\alpha,\beta)\bigr)$
we denote
\begin{equation*}
\cG^{(-\him)}\;\df\;
\cG\Bigl((\cI\setminus\{\him\})\cup\cJ,
\cE\setminus\{(\him,j_{\him})\},
(\alpha,\beta)^{(-\him)}\Bigr)\,,
\end{equation*}
and
\begin{equation*}
\cG_{(-\hjm)}\;\df\;
\cG\Bigl(\cI\cup(\cJ\setminus\{\hjm\}),
\cE\setminus\{(i_{\hjm},\hjm)\},
(\alpha,\beta)_{(-\hjm)}\Bigr)\,,
\end{equation*}
for leaves $\him\in\cI$ and $\hjm\in\cJ$, respectively.
\end{definition}

We now present a leaf elimination algorithm, which starts from a server leaf
elimination. A similar algorithm can start from a customer leaf elimination.

\begin{algo}
Consider the tree $\cG=\cG\bigl(\cI\cup\cJ,\cE,(\alpha,\beta)\bigr)$
as described above.

\noindent\emph{Server Leaf Elimination.}
Let $\cJ_{\text{leaf}}\subset\cJ$ be the collection of all leaves of $\cG$
which are members of $\cJ$.
We eliminate each $\hjm\in\cJ_{\text{leaf}}$ sequentially in any order,
each time replacing $\cG$ by $\cG_{(-\hjm)}$
and setting $\psi_{i_{\hjm}\hjm}=\beta_{\hjm}$.
Let $\cG^{1}=\cG(\cI^{1}\cup\cJ^{1},\cE^{1},(\alpha^{1},\beta^{1}))$
denote the graph obtained.
Note that $\cI^{1}=\cI$
and $\cJ^{1}=\cJ\setminus\cJ_{\text{leaf}}$, and all the leaves
of $\cG^{1}$ are in $\cI$.
Note also that since $\cG^{1}$ is a tree, it contains at least two leaves
unless its maximum degree equals $1$.
Let $\widetilde\varPsi^{1}$ denote the collection of nonzero
elements of $\Psi$ thus far defined.

Given $G^{k}=\cG(\cI^{k}\cup\cJ^{k},\cE^{k},(\alpha^{k},\beta^{k}))$,
for each $k=1,2,3,\dotsc, I-1$, we perform the following:

\begin{enumerate}
\item[\textup{(i)}]
Choose any leaf $\him\in\cI^{k}$ and
set $\psi_{\him j_{\him}}=\alpha^{k}_{\him}$
and $\pi(\him)=k$.
Replace $\cG^{k}$ with $\bigl(\cG^{k}\bigr)^{(-\him)}$.
Let $\widetilde\varPsi^{k+1}=\widetilde\varPsi^{k}\cup\{\psi_{\him j_{\him}}\}$.
\item[\textup{(ii)}]
For $\bigl(\cG^{k}\bigr)^{(-\him)}$ obtained in (i),  perform
the server leaf elimination as described above, and
denote the resulting graph by $\cG^{k+1}$, and by
$\widetilde\varPsi^{k+1}$ denote the collection of nonzero
elements of $\Psi$ thus far defined.
\end{enumerate}

At step $I-1$,  the resulting graph $\cG^I$ has a maximum degree of zero,
where $\cI^{k}=\{\him\}$ is a singleton
and $\cJ^{k}$ is empty and $\Psi$ contains exactly $I+J-1$ non-zero elements.
We set $\pi(\him)=I$. 
\end{algo}

\begin{remark}
We remark that in the first step of server leaf elimination, all leaves in
$\cJ$ are removed while in each customer leaf elimination, only one leaf
in $\cI$ (if more than one) is removed. Thus, exactly $I$ steps
of customer leaf elimination are conducted in the algorithm. 
The input of the algorithm is a tree $\cG$ with the vertices $\cI\cup \cJ$,
the edges $\cE$ and the indices $(\alpha, \beta)$.
The output of the algorithm is the matrix $\Psi=\Psi(\alpha, \beta)$---the unique
solution to the linear map $G$ defined in \eqref{diff-map},
and the permutation of the leaves $\cI$ which tracks the order of the leaves
being eliminated, that is, for each $k =1,2,\dotsc,I$, $\pi(i)=k$
for some $i \in \cI$. Note that the permutation $\pi$ may not be unique,
but the matrix $\Psi$ is unique for a given tree $\cG$.
The elements of the matrix $\Psi$ determine the drift $b(x,u)= b(x,(u^c,u^s))$
by \eqref{diff-drift}.
It is shown in the lemma below that the nonzero elements of $\Psi$ are
linear functions of $(\alpha,\beta)$, which provides an important insight
on the structure of the drift $b(x,u)$; see Lemma~\ref{lem-drift}. 
\end{remark}

\begin{lemma}\label{L-perm}
Let $\pi$ denote the permutation of $\cI$ defined in the leaf elimination
algorithm, and $\pi^{-1}$ denote its inverse. For each  $k\in\cI$,
\begin{itemize}
\item[(a)]
the elements of the matrix
$\widetilde\varPsi^{k}$ are functions
of
\begin{equation*}
\{\alpha_{\pi^{-1}(1)}\,,\dotsc,\alpha_{\pi^{-1}(k-1)},\beta\}\,;\end{equation*}
\item[(b)]
the set
\begin{equation*}
\bigl\{\psi_{ij}\in\widetilde\varPsi^{k}\,\colon
i=\pi^{-1}(1),\dotsc,\pi^{-1}(k),\;j\in\cJ\bigr\}
\end{equation*}
and the set of nonzero elements of rows
$\pi^{-1}(1),\dotsc,\pi^{-1}(k)$ of $\Psi$ are equal;
\item[(c)]  there exists a linear function $F_k$ such that
\begin{equation*}
\alpha_{\pi^{-1}(k)}^{k} \;=\;\alpha_{ \pi^{-1}(k)  }
- F_k(\alpha_{\pi^{-1}(1)}\,, \dotsc,\alpha_{\pi^{-1}(k-1)},\beta)\,.
\end{equation*}
\end{itemize}
\end{lemma}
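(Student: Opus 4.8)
The three claims are really about bookkeeping in the leaf elimination algorithm, so the plan is to prove them simultaneously by induction on $k$, mirroring the recursive structure of the algorithm. The base case $k=1$ is immediate: $\widetilde\varPsi^{1}$ consists precisely of the entries $\psi_{i_{\hjm}\hjm}=\beta_{\hjm}$ set during the initial server leaf elimination, and these depend only on $\beta$ (and in fact only on the coordinates of $\beta$ indexed by the eliminated server leaves), which is (a) for $k=1$; part (b) for $k=1$ is the content of the Lemma preceding the algorithm applied repeatedly (each server leaf removal preserves row and column sums, so the nonzero entries placed in row $\pi^{-1}(1)$ — if any were placed at this stage — coincide with those of $\Psi$); and (c) for $k=1$ reads $\alpha^{1}_{\pi^{-1}(1)}=\alpha_{\pi^{-1}(1)}-F_{1}(\beta)$, where $F_{1}$ collects the $\beta_{\hjm}$-contributions (with appropriate signs) absorbed into the node $\pi^{-1}(1)$ during the first server elimination; this is linear by inspection.

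For the inductive step, suppose (a)--(c) hold up through step $k$. At step $k+1$ the algorithm first performs a customer leaf elimination: it picks the leaf $\him=\pi^{-1}(k+1)\in\cI^{k+1}$ (note $\cI^{k+1}=\cI^{k}$ since step $k$'s customer elimination removed $\pi^{-1}(k)$, then server eliminations removed only server nodes), sets $\psi_{\him j_{\him}}=\alpha^{k+1}_{\him}$, passes to $(\cG^{k+1})^{(-\him)}$, and then runs server leaf elimination to produce $\cG^{k+2}$. By the inductive hypothesis (a), the entries of $\widetilde\varPsi^{k+1}$ are functions of $\{\alpha_{\pi^{-1}(1)},\dotsc,\alpha_{\pi^{-1}(k)},\beta\}$; by (c) at level $k+1$, $\alpha^{k+1}_{\him}=\alpha_{\him}-F_{k+1}(\alpha_{\pi^{-1}(1)},\dotsc,\alpha_{\pi^{-1}(k)},\beta)$, so the newly added entry $\psi_{\him j_{\him}}$ is a linear function of $\{\alpha_{\pi^{-1}(1)},\dotsc,\alpha_{\pi^{-1}(k+1)},\beta\}$, giving (a) at level $k+1$ for the customer-elimination part; the subsequent server eliminations only combine $\psi$-values already present (the updated $\beta$-labels are linear combinations of the current labels), so (a) is preserved. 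For (b): the Lemma preceding the algorithm guarantees that $\Psi^{(-\him)}(\alpha,\beta)\sim\cG^{(-\him)}$, i.e., deleting row $\him$ and updating the label of $j_{\him}$ exactly reproduces the submatrix $\Psi^{(-\him)}$; since row $\him$'s unique nonzero entry is the one we just assigned, the set in (b) at level $k+1$ equals the set at level $k$ together with $\{\psi_{\him j_{\him}}\}$, which by the equivalence matches the nonzero entries of rows $\pi^{-1}(1),\dotsc,\pi^{-1}(k+1)$ of $\Psi$. For (c) at the next level: when we pass to $(\cG^{k+1})^{(-\him)}$ the label $\beta^{k+1}_{j_{\him}}$ is decremented by $\alpha^{k+1}_{\him}$, and the ensuing server eliminations propagate these decrements into the labels of the surviving customer nodes; tracking which $\beta$- and $\alpha$-values flow into $\alpha^{k+2}_{\pi^{-1}(k+2)}$ and collecting them (with signs) defines $F_{k+2}$, which is linear because every label update in the algorithm is an affine combination of current labels with integer coefficients, and the process started from labels $(\alpha,\beta)$.

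The one point requiring care — and the main obstacle — is the precise combinatorial description of how the ``deficit'' $\alpha^{k+1}_{\him}$ (and earlier deficits) propagate through the successive server leaf eliminations to land in $\alpha^{k+2}_{\pi^{-1}(k+2)}$, i.e., identifying exactly which previously-assigned $\psi$-values and original labels enter $F_{k+1}$. The clean way to handle this is to observe that after $k$ steps the current labels satisfy the invariant
\begin{equation*}
\alpha^{k}_{i}\;=\;\alpha_{i}-\sum_{\ell\,<\,k,\;\pi^{-1}(\ell)\,\leadsto\,i}(\pm)\,\psi_{\pi^{-1}(\ell),\,j_{\pi^{-1}(\ell)}}\,,
\end{equation*}
where ``$\leadsto$'' records the (tree) path dependency introduced by the eliminations; since $\cG$ is a tree, these paths are unique, the signs are determined by orientation, and each $\psi$-value is itself linear in the original labels by (a), so the composite $F_{k}$ is linear. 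Once this invariant is stated and verified to be preserved by one customer elimination followed by server eliminations (a routine check using the row/column-sum preservation from the Lemma preceding the algorithm), all three parts follow. Finally, the termination claim — that at step $I-1$ the graph $\cG^{I}$ has maximum degree zero and $\Psi$ has exactly $I+J-1$ nonzero entries — is a standard tree fact: a tree on $I+J$ vertices has $I+J-1$ edges, each customer-elimination step assigns exactly one $\psi_{ij}$ and the initial plus intermediate server eliminations assign the remaining $J$ of them, so the count matches and every vertex is eventually isolated.
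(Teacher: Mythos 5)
Your induction is essentially the argument the paper has in mind: its own proof of Lemma~\ref{L-perm} is the single sentence that the claims are ``evident from the incremental definition of $\Psi$ in the algorithm,'' so what you have written is that one-liner made explicit, and the overall structure (simultaneous induction on $k$, using the row/column-sum preservation from the preceding lemma) is sound.

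The one slip is in the displayed invariant you propose for handling part (c). As written, it expresses the deficit $\alpha_i-\alpha^k_i$ as a signed sum of the entries $\psi_{\pi^{-1}(\ell),\,j_{\pi^{-1}(\ell)}}$ assigned at earlier \emph{customer} eliminations; but already in the ``W'' model one finds $\alpha^2_2=\alpha_2-(\beta_1-\alpha_1)=\alpha_2-\psi_{21}$, where $\psi_{21}$ was assigned during a \emph{server} elimination and the deficit involves $\beta_1$, which your formula omits. The correct (and much simpler) invariant is that at every stage the current label of a surviving node equals its original label minus the sum of the already-assigned entries of $\Psi$ in its own row (for $i\in\cI$) or column (for $j\in\cJ$); this is immediate from the definitions of $(\alpha,\beta)^{(-\him)}$ and $(\alpha,\beta)_{(-\hjm)}$, it is preserved by each elimination step, and combined with part (a) it gives the linearity of $F_k$ in part (c) without any path-tracking or sign bookkeeping. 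With that replacement your proof goes through verbatim.
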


\begin{proof}
This is evident from the incremental definition of $\Psi$ in the algorithm.
\end{proof}

\begin{lemma} \label{lem-drift}
The drift $b(x, u)= b(x, (u^c, u^s))$ in the limiting diffusion $X$
in \eqref{hatX}
can be expressed as
\begin{equation} \label{diff-drift2}
b(x, u) \;=\;- B_1 (x -  (e\cdot x)^{+} u^c) + (e\cdot x)^{-} B_2 u^s
- (e \cdot x)^{+} \varGamma u^c  + \ell \,,
\end{equation}
where $B_1$ is a lower-diagonal $I\times I$ matrix with positive diagonal
elements, $B_2$ is an $I \times J$ matrix and
$\varGamma= \diag\{\gamma_1,\dotsc, \gamma_I\}$.
\end{lemma}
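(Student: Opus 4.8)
The plan is to feed the output of the Leaf Elimination Algorithm, together with the structural facts of Lemma~\ref{L-perm}, into the formula \eqref{diff-drift} for the drift. Fix $x\in\RR^{I}$ and $u=(u^c,u^s)\in\Act$, and abbreviate $\alpha\df x-(e\cdot x)^{+}u^c$ and $\beta\df-(e\cdot x)^{-}u^s$. Since $e\cdot u^c=e\cdot u^s=1$, one has $e\cdot\alpha=e\cdot x-(e\cdot x)^{+}=-(e\cdot x)^{-}=e\cdot\beta$, so $(\alpha,\beta)\in D_G$, the matrix $\Psi=(\psi_{ij})=G(\alpha,\beta)$ is well defined, and $\widehat{G}_{ij}[u](x)=\psi_{ij}$ by \eqref{diff-map2}. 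Hence \eqref{diff-drift} reads
\begin{equation*}
b_i(x,u)\;=\;-\,\varPhi_i(\alpha,\beta)\;-\;\gamma_i(e\cdot x)^{+}u^c_i\;+\;\ell_i\,,
\qquad\varPhi_i(\alpha,\beta)\;\df\;\sum_{j\in\cJ(i)}\mu_{ij}\,\psi_{ij}(\alpha,\beta)\,,
\end{equation*}
and since $-\gamma_i(e\cdot x)^{+}u^c_i=-\bigl[(e\cdot x)^{+}\varGamma u^c\bigr]_i$, it suffices to produce a matrix $B_1\in\RR^{I\times I}$, lower-triangular with positive diagonal in the ordering of $\cI$ given by the permutation $\pi$ of the algorithm, and a matrix $B_2\in\RR^{I\times J}$, such that $\varPhi_i(\alpha,\beta)=[B_1\alpha+B_2\beta]_i$ for every $(\alpha,\beta)\in D_G$; substituting $\alpha=x-(e\cdot x)^{+}u^c$, $\beta=-(e\cdot x)^{-}u^s$ and using $B_2\beta=-(e\cdot x)^{-}B_2u^s$ then gives \eqref{diff-drift2}.

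Run the algorithm, let $\pi$ be the resulting permutation of $\cI$, and relabel the customer classes so that $\pi=\mathrm{id}$ (class $i$ is the $i$-th class eliminated). By Lemma~\ref{L-perm} each $\psi_{ij}$ is a linear function of $(\alpha,\beta)$. For $i<I$ let $j_i\in\cJ(i)$ be the unique neighbour of $i$ at the moment $i$ is eliminated as a customer leaf; the algorithm sets $\psi_{i,j_i}=\alpha_i^{\pi(i)}$, and Lemma~\ref{L-perm}(c) gives $\alpha_i^{\pi(i)}=\alpha_i-F_i(\alpha_1,\dots,\alpha_{i-1},\beta)$ with $F_i$ linear, so $\psi_{i,j_i}$ has coefficient exactly $1$ on $\alpha_i$ and is otherwise a function of $\alpha_1,\dots,\alpha_{i-1}$ and $\beta$ only. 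Every other entry $\psi_{ij}$ of row $i$, i.e.\ $j\in\cJ(i)\setminus\{j_i\}$, is assigned during a server leaf elimination removing the edge $(i,j)$; since the pivotal edge $(i,j_i)$ is the edge deleted when $i$ itself is removed, each such server leaf elimination occurs at a step \emph{strictly before} $\pi(i)$, whence by Lemma~\ref{L-perm}(a) $\psi_{ij}$ depends only on $\alpha_1,\dots,\alpha_{i-1}$ and $\beta$. For the terminal class $i=I$ the algorithm assigns no pivotal entry; here I would use the row-sum identity $\sum_{j\in\cJ(I)}\psi_{Ij}=\alpha_I$, which is exactly the first line of \eqref{diff-map}, pick any $j^*\in\cJ(I)$, set $j_I\df j^*$, and write $\psi_{I,j_I}=\alpha_I-\sum_{j\in\cJ(I)\setminus\{j_I\}}\psi_{Ij}$; the entries $\psi_{Ij}$ with $j\ne j_I$ are again produced by server leaf eliminations and therefore depend only on $\alpha_1,\dots,\alpha_{I-1}$ and $\beta$. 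This rewriting is legitimate because the $\psi_{ij}$ are only ever evaluated at $(\alpha,\beta)\in D_G$, where the representation as linear functions is non-unique anyway.

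Combining these observations, for every $i\in\cI$,
\begin{equation*}
\varPhi_i(\alpha,\beta)\;=\;\mu_{i,j_i}\,\psi_{i,j_i}\;+\sum_{j\in\cJ(i)\setminus\{j_i\}}\mu_{ij}\,\psi_{ij}
\;=\;\mu_{i,j_i}\,\alpha_i\;+\;L_i(\alpha_1,\dots,\alpha_{i-1},\beta)\,,
\end{equation*}
with $L_i$ linear and $\mu_{i,j_i}>0$ (it is an activity of $\cE$). Defining $(B_1)_{ii'}$ as the coefficient of $\alpha_{i'}$ in $\varPhi_i$ and $(B_2)_{ij}$ as the coefficient of $\beta_j$ in $\varPhi_i$, we get $(B_1)_{ii}=\mu_{i,j_i}>0$ and $(B_1)_{ii'}=0$ whenever $i'>i$, so $B_1\in\RR^{I\times I}$ is lower-triangular with positive diagonal, $B_2\in\RR^{I\times J}$, and $\varPhi_i(\alpha,\beta)=[B_1\alpha+B_2\beta]_i$ on $D_G$. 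Substituting $\alpha=x-(e\cdot x)^{+}u^c$ and $\beta=-(e\cdot x)^{-}u^s$ into $b_i=-\varPhi_i(\alpha,\beta)-\gamma_i(e\cdot x)^{+}u^c_i+\ell_i$ yields \eqref{diff-drift2}; the abandonment term $-(e\cdot x)^{+}\varGamma u^c$ and the constant $\ell$ are immediate.

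The only delicate point is the bookkeeping in the second paragraph: one must verify, from the way the algorithm interleaves the (batch) server leaf eliminations with the one-at-a-time customer leaf eliminations, that every non-pivotal entry of row $i$ is fixed at a step \emph{strictly} smaller than $\pi(i)$ — it is precisely this ``off by one'' that upgrades lower-triangularity of $B_1$ to lower-triangularity \emph{together with} strict positivity of the diagonal — and to dispose of the terminal class $i=I$, for which the algorithm produces no pivotal entry and the row-sum constraint from \eqref{diff-map} must take its place.
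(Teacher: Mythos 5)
Your proposal is correct and follows essentially the same route as the paper: run the leaf elimination algorithm, invoke Lemma~\ref{L-perm} to see that each row $i$ of $\Psi(\alpha,\beta)$ depends linearly on $\alpha_{\pi^{-1}(1)},\dotsc,\alpha_{\pi^{-1}(i)}$ and $\beta$ with the pivotal entry carrying coefficient $1$ on $\alpha_{\pi^{-1}(i)}$, and then read off $B_1$ and $B_2$ from $\varPhi_i(\alpha,\beta)=\mu_{ij_i}\alpha_i+L_i(\alpha_1,\dotsc,\alpha_{i-1},\beta)$ after substituting $\alpha=x-(e\cdot x)^{+}u^c$, $\beta=-(e\cdot x)^{-}u^s$. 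You are in fact more explicit than the paper on the two points it glosses over --- that every non-pivotal entry of row $i$ is fixed strictly before step $\pi(i)$, and that the terminal class must be handled via the row-sum constraint of \eqref{diff-map} --- and both of these are resolved correctly.
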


\begin{proof}
We perform the leaf elimination algorithm and reorder the indices in $\cI$
according to the permutation $\pi$. Thus, leaf $i \in \cI$ is eliminated in step
$i$ of the customer leaf elimination. 
Let $j_i\in\cJ$ denote the unique node corresponding to $i\in\cI$,
when $i$ is eliminated as a leaf in step $i$  of the algorithm.
It is important to note that, with respect to the reordered indices, 
the matrix
$\widehat{G}^{0}(x)$ (see Remark~\ref{R-hG}) takes the following form
\begin{equation*}
\widehat{G}_{i,j}^{0}(x)\;=\;
\begin{cases}
x_{i} + \widetilde{G}_{i{j_i}}(x_{1},\dotsc,x_{i-1})&\text{for}~j=j_i\,,\\[3pt]
\widetilde{G}_{ij}(x_{1},\dotsc,x_{i-1})&\text{for}~i\sim j\,,
\;j\ne j_i \,, \\[3pt]
0&\text{otherwise,}
\end{cases}
\end{equation*}
where each $\widetilde{G}_{ij}$ is a \emph{linear} function of its arguments.
As a result, by Lemma~\ref{L-perm}, the drift takes the form
\begin{equation}\label{E-drift2}
b_{i}\bigl(x,u\bigr)\;=\; - \mu_{ij_i} x_{i}
+ \Tilde{b}_{i}(x_{1},\dotsc,x_{i-1}) + \Tilde{F}_{i}\bigl(\pex u^c,\nex u^s \bigr)
- \gamma_i\, (e\cdot x)^{+} u_i^c + \ell_i\,.
\end{equation}
Two things are important to note: (a) $\Tilde{F}_{i}$ is a linear function,
and (b) $\mu_{ij_i}>0$ (since $i\sim j_i$).

Let $\widehat{b}$ denote the vector field
\begin{equation}\label{E-hatb}
\widehat{b}_{i}(x) \;\df\;
- \mu_{ij_i} x_{i}
+ \Tilde{b}_{i}(x_{1},\dotsc,x_{i-1})\,.
\end{equation}
Then $\widehat{b}$ is a linear vector field corresponding to a lower-diagonal
matrix with negative diagonal elements, and this is denoted by $- B_1$.
The form of the drift in \eqref{diff-drift2} then readily follows by the
leaf elimination algorithm and \eqref{diff-drift}.  
\end{proof}

\begin{remark}
By the representation of the drift $b(x, u)$ in \eqref{diff-drift2},
the limiting diffusion $X$ can be classified as a piecewise-linear
controlled diffusion as discussed in Section~3.3 of \cite{ABP14}.
The difference of the drift $b(x,u)$ from that in \cite{ABP14} lies in two aspects:
 (i) there is an additional term $(e\cdot x)^{-} B_2 u^s$, and
 (ii)  $B_1$ may not be an $M$-Matrix (see, e.g., the $B_1$ matrices in the
 $W$ model and the model in Example~\ref{Ex4} below). 
\end{remark}

\smallskip

\subsection{Examples} \label{sec-example}
In this section, we provide several examples to illustrate the leaf
elimination algorithm, including the classical ``N", ``M", ``W" models and
the non-standard models that cannot be solved in \cite{Atar-05a, Atar-05b}.
Note that in Assumption 3 of \cite{Atar-05b} (and in Theorem~1 of \cite{Atar-05a}),
it is required that either of the following conditions holds: (i) the service
rates $\mu_{ij}$ are either class or pool dependent, and $\gamma_i =0$
for all $i \in \cI$;  (ii) the tree $\cG$ is of diameter 3 at most and in addition,
$\gamma_i \le \mu_{ij}$ for each $i\sim j $ in $\cG$.
We do not impose any of these conditions
in asserting Hypotheses~\ref{HypA} and \ref{HypB} later
in Section~\ref{sec-verify}.

\begin{example}[The ``N" model]
Let $\cI = \{1,2\}$, $\cJ =\{1, 2\}$ and $\cE = \{1\sim 1, 1\sim 2, 2\sim 2\}$.
The matrix $\Psi$ takes the form
$\Psi(\alpha,\beta) =\begin{bmatrix} \beta_1 & \alpha_1 -\beta_1 \\
0 & \alpha_2 \end{bmatrix} $
and the permutation $\pi$ satisfies $\pi^{-1}(k) =k $ for $k=1,2$. 
The matrices $B_1$ and $B_2$ in the drift $b(x, u)$ are
$B_1 = \diag\{\mu_{12},  \mu_{22}\}$ and $B_2 = \diag\{\mu_{11} - \mu_{12}, 0\}$. 
\end{example}

\begin{remark}\label{R-hG-0}
Recall $\widehat{G}^{0}(x)$ in Remark~\ref{R-hG}. 
Applying the leaf elimination algorithm, there may be more than one
realizations of $\widehat{G}^{0}(x)$.
For example in the `N' network, the solution can be expressed as
$\Psi(\alpha,\beta) =\begin{bmatrix} \beta_1 & \alpha_1 -\beta_1 \\
0 & \alpha_2 \end{bmatrix}$,
or $\Psi(\alpha,\beta) =\begin{bmatrix} \beta_1 & \beta_2 -\alpha_2 \\
0 & \alpha_2 \end{bmatrix}$, and these give different answers when
$u\equiv0$.
It depends on the permutation order in the implementation of elimination, i.e.,
which pair of nodes is eliminated last. 
\end{remark}

\begin{example}[The ``W" model]\label{Ex2}
Let $\cI = \{1, 2, 3\}$, $\cJ = \{1, 2\}$ and
$\cE = \{ 1\sim 1, 2\sim 1, 2 \sim 2, 3 \sim 2\}$. 
Following the algorithm, we obtain that the matrix $\Psi$ takes the form
\begin{equation*}
\Psi(\alpha,\beta)\;=\;\begin{bmatrix} \alpha_1 &  0 \\
\beta_1 - \alpha_1 & \alpha_2 - (\beta_1 - \alpha_1) \\ 0 & \alpha_3
\end{bmatrix}\,,
\end{equation*}
and the permutation $\pi$ satisfies $\pi^{-1}(k) =k $ for $k=1,2, 3$.
The matrices $B_1$ and $B_2$ in the drift $b(x, u)$ are
\begin{equation*}
B_1 \;=\; \begin{bmatrix} \mu_{11} & 0 &0 \\
\mu_{21} + \mu_{22} &  \mu_{22} & 0 \\ 0 & 0 & \mu_{32} \end{bmatrix}
\qquad\text{and}\qquad 
B_2 \;=\; \begin{bmatrix}0 & 0  \\ \mu_{21}-\mu_{22}& 0  \\
0 & 0  \end{bmatrix}\,.
\end{equation*}
\end{example}

\begin{example}[The ``M" model]
Let $\cI = \{1, 2\}$, $\cJ= \{1, 2, 3\}$, and
$\cE = \{1\sim 1, 1\sim 2, 2\sim2, 2\sim 3 \}$. 
The matrix $\Psi$ takes the form
\begin{equation*}
\Psi(\alpha,\beta) =
\begin{bmatrix} \beta_1 & \alpha_1 - \beta_1 &  0 \\[2pt]
0 & \alpha_2 - \beta_3 & \beta_3  \end{bmatrix}\,,
\end{equation*}
and the permutation $\pi$ satisfies $\pi^{-1}(k) =k $ for $k=1,2$. 
The matrices $B_1$ and $B_2$ in the drift $b(x, u)$ are
\begin{equation*}
 B_1\;=\;\diag\{\mu_{12}, \mu_{22}\}
\qquad\text{and}\qquad 
B_2\;=\;\begin{bmatrix}\mu_{11}-\mu_{12} & 0 & 0 \\[2pt]
0& 0 & \mu_{23}- \mu_{22}  \end{bmatrix}\,.
\end{equation*}
\end{example}

\begin{example}\label{Ex4}
Let $\cI = \{1, 2, 3, 4\}$, $\cJ= \{1, 2, 3\}$ 
and $\cE = \{1\sim 1, 2\sim 1, 2\sim2, 2\sim 3, 3\sim 3, 4\sim 3 \}$. 
We obtain
\begin{equation*}
\Psi(\alpha,\beta) \;=\;
\begin{bmatrix} \alpha_1 & 0 &  0 \\
\beta_1 - \alpha_1 & \beta_2 & (\alpha_2 - \beta_2) - (\beta_1 - \alpha_1) \\
0 & 0 & \alpha_3 \\ 0 & 0 & \alpha_4 \end{bmatrix}\,,
\end{equation*}
and the permutation $\pi$ satisfies $\pi^{-1}(k) =k $ for $k=1,2,3,4$. 
The matrices $B_1$ and $B_2$ in the drift $b(x, u)$ are
\begin{equation*}
B_1 \;=\; \begin{bmatrix} \mu_{11} & 0 &0 & 0 \\
-\mu_{21} + \mu_{23} & \mu_{23} & 0 & 0 \\
0 & 0 & \mu_{33} & 0 \\ 0 & 0 & 0 & \mu_{43} \end{bmatrix}
\qquad\text{and}\qquad
B_2 \;=\;\begin{bmatrix}0 & 0 & 0 \\
-\mu_{21} - \mu_{23} &  - \mu_{23} & 0 \\
0 & 0 &0  \\ 0 & 0 & 0  \end{bmatrix}\,.
\end{equation*}
\end{example}

\smallskip

\subsection{Verification of Hypotheses~\ref{HypA} and \ref{HypB}}
\label{sec-verify}

In this section we show that the controlled diffusions $X$ in \eqref{hatX} for
the multiclass multi-pool networks
satisfy Hypotheses~\ref{HypA} and \ref{HypB}. 

\begin{theorem}\label{T-PA} 
For the unconstrained ergodic control problem \eqref{diff-opt}
under a running cost $r$ in \eqref{E-cost} with
strictly positive vectors $\xi$ and $\zeta$, Hypothesis~\ref{HypA} holds for
$\cK=\cK_{\delta}$ defined by
\begin{equation}\label{E-cK}
\cK_{\delta}\;\df\; \bigl\{ x\in\RI\,\colon \abs{e\cdot x} > \delta \abs{x}\bigr\}
\end{equation}
for some $\delta>0$ small enough, and for a function
$h(x) \df \Tilde{C} |x|^m$ with some positive $\Tilde{C}$.
\end{theorem}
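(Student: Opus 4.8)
The strategy is to verify the two parts of Hypothesis~\ref{HypA} separately: part~(i) is elementary, while part~(ii) rests on an explicit Lyapunov function adapted to the Hurwitz matrix $-B_{1}$ furnished by Lemma~\ref{lem-drift}. For part~(i), since $u^{c}$ and $u^{s}$ range over probability simplices and $\xi,\zeta$ are strictly positive, convexity of $t\mapsto t^{m}$ gives $\sum_{i}\xi_{i}(u^{c}_{i})^{m}\ge\xi_{\min}I^{1-m}$ and $\sum_{j}\zeta_{j}(u^{s}_{j})^{m}\ge\zeta_{\min}J^{1-m}$, where $\xi_{\min}\df\min_{i}\xi_{i}$ and $\zeta_{\min}\df\min_{j}\zeta_{j}$. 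Since exactly one of $\pex,\nex$ equals $\abs{e\cdot x}$ and the other vanishes, it follows that $\min_{u\in\Act}r(x,u)\ge c_{3}\abs{e\cdot x}^{m}$ with $c_{3}\df\min\{\xi_{\min}I^{1-m},\zeta_{\min}J^{1-m}\}>0$. On $\Bar\cK_{\delta}$ we have $\abs{e\cdot x}\ge\delta\abs{x}$, whence $\min_{u}r(x,u)\ge c_{3}\delta^{m}\abs{x}^{m}$; as $x\mapsto\min_{u}r(x,u)$ is continuous ($\Act$ being compact), the set $\Bar\cK_{\delta}\cap\{x\colon\min_{u}r(x,u)\le c\}$ is closed and bounded, hence compact, for every $c$ and every $\delta>0$. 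This is inf-compactness of $r$ on $\cK_{\delta}$.

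For part~(ii) I would use the drift representation from Lemma~\ref{lem-drift},
\begin{equation*}
b(x,u)\;=\;-B_{1}x+\pex B_{1}u^{c}+\nex B_{2}u^{s}-\pex\varGamma u^{c}+\ell\,,
\end{equation*}
together with the fact that $B_{1}$ is lower-triangular with positive diagonal, so $-B_{1}$ is Hurwitz. Let $Q$ be the symmetric solution of $B_{1}\transp Q+QB_{1}=\mathrm{Id}$, which exists and is positive definite because $-B_{1}$ is Hurwitz, and set $\Lyap(x)\df(1+x\transp Qx)^{m/2}$; this function is in $\Cc^{\infty}(\RI)$ since $1+x\transp Qx\ge1$, and inf-compact since $Q\succ0$. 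Recalling that $\Sigma\Sigma\transp=\diag(2\lambda_{1},\dotsc,2\lambda_{I})$, so that $\Lg^{u}\Lyap=\sum_{i}\lambda_{i}\partial_{ii}\Lyap+b(\cdot,u)\cdot\nabla\Lyap$, and computing $\nabla\Lyap(x)=m(1+x\transp Qx)^{m/2-1}Qx$, the leading contribution is
\begin{equation*}
-B_{1}x\cdot\nabla\Lyap(x)\;=\;-\tfrac{m}{2}(1+x\transp Qx)^{m/2-1}\abs{x}^{2}
\;\le\;-c_{0}\abs{x}^{m}+C_{1}
\end{equation*}
for suitable $c_{0},C_{1}>0$, obtained by bounding $(1+x\transp Qx)^{m/2-1}$ below by a constant multiple of $\abs{x}^{m-2}$ on $\{\abs{x}\ge1\}$ (treating $m\ge2$ and $1\le m<2$ separately, using $\lambda_{\min}(Q)\abs{x}^{2}\le x\transp Qx\le\norm{Q}\abs{x}^{2}$). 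The second-order part and the term $\ell\cdot\nabla\Lyap$ are bounded by $C(1+\abs{x}^{m-1})$, hence by $\eta\abs{x}^{m}+C_{\eta}$ for any $\eta>0$; and each of the three remaining cross terms is dominated by $C(e\cdot x)^{\pm}(1+\abs{x}^{m-1})$, which by Young's inequality with exponents $m$ and $m/(m-1)$ is at most $\eta\abs{x}^{m}+C_{\eta}[(e\cdot x)^{\pm}]^{m}+C_{\eta}$ — note that the power of $(e\cdot x)^{\pm}$ produced is exactly $m$. Collecting these and taking $\eta$ small, one obtains, for some $\kappa_{1}>0$ and constants $\kappa_{0},\kappa_{2},\kappa_{3}\ge0$,
\begin{equation*}
\Lg^{u}\Lyap(x)\;\le\;\kappa_{0}-\kappa_{1}\abs{x}^{m}+\kappa_{2}[\pex]^{m}+\kappa_{3}[\nex]^{m}
\qquad\forall\,(x,u)\in\RI\times\Act\,.
\end{equation*}

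Finally I would tune the constants. Replacing $\Lyap$ by $c\Lyap$ for sufficiently small $c>0$, we may assume $\kappa_{0}\le1$, $\kappa_{2}\le\xi_{\min}I^{1-m}$ and $\kappa_{3}\le\zeta_{\min}J^{1-m}$; since $r(x,u)\ge\xi_{\min}I^{1-m}[\pex]^{m}+\zeta_{\min}J^{1-m}[\nex]^{m}$, the displayed bound yields $\Lg^{u}\Lyap(x)\le1+r(x,u)$ on all of $\RI$, a fortiori on $\cK_{\delta}$. On $\cK_{\delta}^{c}$ one has $[\pex]^{m},[\nex]^{m}\le\delta^{m}\abs{x}^{m}$, so choosing $\delta>0$ small enough that $\kappa_{1}-(\kappa_{2}+\kappa_{3})\delta^{m}\ge\kappa_{1}/2$ gives $\Lg^{u}\Lyap(x)\le1-h(x)$ with $h(x)\df\tfrac{\kappa_{1}}{2}\abs{x}^{m}$, which is continuous and inf-compact. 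Together with part~(i) this establishes Hypothesis~\ref{HypA} for $\cK=\cK_{\delta}$. The main obstacle lies in the second paragraph: extracting the sharp $-\abs{x}^{m}$ dissipation from the non-$M$-matrix, merely Hurwitz, part $-B_{1}x$ — which is precisely where one uses the Lyapunov equation rather than a componentwise argument — while making sure the cross terms involving $\pex$ and $\nex$ generate only $[\pex]^{m}$ and $[\nex]^{m}$ contributions (plus an absorbable fraction of $\abs{x}^{m}$), these being exactly the quantities that the geometry of $\cK_{\delta}$ and the lower bound on $r$ can absorb; the Young-inequality bookkeeping has to be carried out uniformly over $m\ge1$, including the fractional-exponent range $1\le m<2$.
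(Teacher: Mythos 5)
Your proof is correct and follows essentially the same route as the paper: both verify inf-compactness of $r$ on $\cK_{\delta}$ from the lower bound $\min_{u}r(x,u)\ge c\,\abs{e\cdot x}^{m}$, and both build the Lyapunov function $(x\transp Qx)^{\nicefrac{m}{2}}$ from a matrix $Q$ making $QB_{1}+B_{1}\transp Q$ positive definite (the paper takes $Q$ diagonal, exploiting the lower-triangular structure of $B_{1}$; your symmetric solution of the Lyapunov equation works equally well), then absorbs the cross terms in $\pex$ and $\nex$ into $r$ on $\cK_{\delta}$ and into the $-\abs{x}^{m}$ dissipation on $\cK_{\delta}^{c}$ for $\delta$ small. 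The only differences are bookkeeping: you derive one global inequality and split at the end, whereas the paper splits into the two regions first.
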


\begin{proof}
Recall the form of the drift $b(x, u)$ in \eqref{diff-drift2} in
Lemma~\ref{lem-drift}.
The set $\cK_{\delta}$ in \eqref{E-cK} is an open convex cone, and the running cost
function $r(x,u) = r(x, (u^c, u^s))$ in \eqref{E-cost} is inf-compact on
$\cK_{\delta}$.
Define $\Lyap\in\Cc^2(\RR^I)$ by
$\Lyap(x)\df(x\transp Q x)^{\nicefrac{m}{2}}$ for $|x|\ge 1$,
where $m$ is as given in \eqref{E-cost},
and the matrix $Q$ is a diagonal matrix
satisfying $x\transp(QB_1 + B_1\transp Q) x\ge8\abs{x}^{2}$.
This is always possible, since $-B_{1}$ is a Hurwitz
lower diagonal matrix.
Then we have
\begin{align*}
b(x,u) \cdot  \grad \Lyap(x) &\;=\;\ell \cdot \grad \Lyap(x)
-\frac{m}{2} (x\transp Q x)^{\nicefrac{m}{2}-1} x\transp(QB_1 + B_1\transp Q) x \\
&\mspace{100mu} +m (x\transp Q x)^{\nicefrac{m}{2}-1}
Qx \bigl((B_1-\Gamma) (e\cdot x)^{+} u^c + B_2 (e\cdot x)^{-} u^s\bigr) \\[5pt]
&\;\le\;  m (\ell\transp Q x) (x\transp Q x)^{\nicefrac{m}{2}-1}
-  m (x\transp Q x)^{\nicefrac{m}{2}-1}
\bigl(4|x|^2 -C_1|x| |e\cdot x|\bigr)
\end{align*}
for some positive constant $C_1$.
Choosing $\delta = C_1^{-1}$ we obtain 
\begin{equation*}
b(x,u) \cdot  \grad \Lyap(x)  \;\le\;
C_2 -  m (x\transp Q x)^{\nicefrac{m}{2}-1} |x|^2
\qquad \forall\,x\in \cK_{\delta}^c\,,
\end{equation*}
for some positive constant $C_2$.
Similarly on the set $\cK_{\delta} \cap \{|x| \ge 1\}$, we can obtain the
following inequality
\begin{equation*}
b(x,u) \cdot  \grad \Lyap(x) \;\le\; C_3 (1+ |e\cdot x|^m)
\qquad \forall\,x\in \cK_{\delta}\,, 
\end{equation*}
for some positive constant $C_3>0$. 
Combining the above and rescaling $\Lyap$, we obtain 
\begin{equation*}
\Lg^{u}\Lyap (x) \;\le\; 1 - C_4 |x|^m \Ind_{\cK_{\delta}^c}(x)
+  C_{5} |e\cdot x|^m \Ind_{\cK_{\delta}}(x)\,, \quad x \in \RR^I\,,
\end{equation*}
for  some positive constants $C_4$ and $C_{5}$.  
Thus Hypothesis~\ref{HypA} is satisfied.
\end{proof}

\begin{remark}
It follows by Theorem~\ref{T-PA} that Lemma~\ref{L3.3} holds
for the ergodic control problem with constraints
in \eqref{diff-opt-c1}--\eqref{diff-opt-c2} under a running cost $r_{0}$ as in
\eqref{E-cost} with $\zeta\equiv 0$.
\end{remark}

\begin{theorem}\label{T-stab}
Suppose that the vector $\gamma$ is not identically zero.
There exists a constant Markov control $\Bar{u}=(\Bar{u}^c,\Bar{u}^s)\in\Act$
which is stable and has the following property:
For any $m\ge1$ there exists
a Lyapunov function $\Lyap$ of the form
$\Lyap(x) = (x\transp Q x)^{\nicefrac{m}{2}}$
for a diagonal positive matrix $Q$,
and positive constants $\kappa_{0}$ and $\kappa_{1}$ such that
\begin{equation}\label{E-stab}
\Lg^{\Bar{u}} \Lyap(x)\;\le\;
\kappa_{0}- \kappa_{1}\,\Lyap(x)\qquad \forall\,x\in\RI\,.
\end{equation}
As a result, the controlled process under $\Bar{u}$ is geometrically ergodic,
and its invariant probability distribution
has all moments finite.
\end{theorem}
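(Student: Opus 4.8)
The plan is to exhibit an explicit \emph{constant} control $\Bar{u}=(\Bar{u}^{c},\Bar{u}^{s})\in\Act$ together with, for each $m\ge1$, a diagonal positive definite matrix $Q=Q_{m}$, so that $\Lyap(x)=(x\transp Q x)^{\nicefrac{m}{2}}$ (modified inside the unit ball so as to lie in $\Cc^{2}(\RI)$) satisfies \eqref{E-stab}. Granting \eqref{E-stab}, the remaining assertions are routine: since $\Lyap$ is inf-compact and the diffusion is nondegenerate by (A3), the inequality $\Lg^{\Bar{u}}\Lyap\le\kappa_{0}-\kappa_{1}\Lyap$ is the classical Foster--Lyapunov condition yielding positive recurrence and $\Lyap$-geometric ergodicity, while running it for every $m$ gives $\mu_{\Bar{u}}(\abs{x}^{m})<\infty$ for all $m$; see the results recalled in Section~\ref{sec-structural} and \cite{book}. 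The construction of $\Bar{u}$ and $Q$ is carried out through the representation of Lemma~\ref{lem-drift}.

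First, a reduction. Since $a=\Sigma\Sigma\transp$ is constant, $\Lyap\in\order(\abs{x}^{m})$ with Hessian in $\order(\abs{x}^{m-2})$, and $\grad\Lyap(x)=m(x\transp Q x)^{\nicefrac{m}{2}-1}Qx$, proving \eqref{E-stab} reduces to producing one diagonal $Q\succ0$ with $Qx\cdot b(x,\Bar{u})\le\kappa_{0}'-\kappa_{1}'\abs{x}^{2}$ on $\RI$. By \eqref{diff-drift2}, $b(\cdot\,,\Bar{u})$ is piecewise linear: on $\{e\cdot x\ge0\}$ it equals $-M_{+}x+\ell$ with $M_{+}=B_{1}-(B_{1}-\varGamma)\Bar{u}^{c}\,e\transp$, and on $\{e\cdot x\le0\}$ it equals $-M_{-}x+\ell$ with $M_{-}=B_{1}+(B_{2}\Bar{u}^{s})\,e\transp$, each a rank-one perturbation of $B_{1}$; since $Qx\cdot\ell=O(\abs{x})$, it suffices to get $x\transp(QM_{+}+M_{+}\transp Q)x\ge c\abs{x}^{2}$ on $\{e\cdot x\ge0\}$ and $x\transp(QM_{-}+M_{-}\transp Q)x\ge c\abs{x}^{2}$ on $\{e\cdot x\le0\}$. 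Because $\gamma\not\equiv0$, pick $i_{0}$ with $\gamma_{i_{0}}>0$, run the leaf-elimination algorithm so that $i_{0}$ is eliminated \emph{last} (so $i_{0}=I$ after relabeling), and take $\Bar{u}^{c}=e_{i_{0}}$: then $(B_{1}-\varGamma)\Bar{u}^{c}$ is the $i_{0}$-th column of $B_{1}-\varGamma$, which by lower-triangularity equals $(\mu_{i_{0}j_{i_{0}}}-\gamma_{i_{0}})e_{i_{0}}$, so $M_{+}$ is still lower triangular but now has $i_{0}$-th diagonal entry $\gamma_{i_{0}}>0$; in particular $-M_{+}$ is again a Hurwitz lower-triangular matrix.

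With this choice the analysis proceeds along the cone decomposition of Theorem~\ref{T-PA}. On the slab $\cK_{\delta}^{c}=\{\abs{e\cdot x}\le\delta\abs{x}\}$ the estimate proved there gives $\Lg^{u}\Lyap(x)\le1-\Tilde{C}\abs{x}^{m}$ for \emph{any} control, hence in particular for $\Bar{u}$. On the cone $\{e\cdot x>\delta\abs{x}\}$ the drift is $-M_{+}x+\ell$, and since $-M_{+}$ is Hurwitz lower triangular one chooses the diagonal entries of $Q$ decreasing along the elimination order — exactly as in the proof of Theorem~\ref{T-PA}, now with $M_{+}$ in place of $B_{1}$ — so that $x\transp(QM_{+}+M_{+}\transp Q)x\ge\Lambda\abs{x}^{2}$, with $\Lambda$ swallowing the residual lower-order terms; here it is the abandonment of class $i_{0}$ that converts the $(i_{0},i_{0})$ diagonal entry of $M_{+}$ into a positive number. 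On the cone $\{e\cdot x<-\delta\abs{x}\}$ the drift is $-B_{1}x+(e\cdot x)^{-}B_{2}\Bar{u}^{s}+\ell$, and here $\Bar{u}^{s}$ is chosen so that $B_{2}\Bar{u}^{s}$ has the most favorable support permitted by the leaf-elimination structure of $B_{2}$; one then uses the $-B_{1}x$ restoring term and the rank-one form of the perturbation, keeping $x\transp(QM_{-}+M_{-}\transp Q)x$ bounded below on this half-space by the \emph{same} $Q$, possibly after adjusting which leaf is last and the magnitude of $q_{i_{0}}$ relative to the remaining $q_{i}$'s. Assembling the three regions gives the two quadratic-form bounds, hence \eqref{E-stab}.

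I expect the last step to be the crux. Producing a single diagonal $Q$ that simultaneously controls $M_{+}$ on $\{e\cdot x\ge0\}$ and $M_{-}$ on $\{e\cdot x\le0\}$ is delicate, because a crude operator-norm bound on the rank-one perturbation fails: forcing $x\transp(QB_{1}+B_{1}\transp Q)x$ to be uniformly large makes the diagonal entries of $Q$ spread out, which inflates $\norm{Q}$ — and hence the cross term $2(x\cdot Qw_{-})(e\cdot x)$ with $w_{-}=B_{2}\Bar{u}^{s}$ — at least as fast as the lower bound grows. The argument must instead exploit the sign structure ($e\cdot x$ has a definite sign on each cone, and the abandonment contributes favorably precisely on $\{e\cdot x\ge0\}$) together with the exact lower/upper-triangular patterns of $B_{1}$, $B_{2}$, $M_{+}$, $M_{-}$ produced by the algorithm; reconciling the orderings of the $q_{i}$'s that $M_{+}$ and $M_{-}$ separately prefer, and the placement of $i_{0}$, is where the main work lies.
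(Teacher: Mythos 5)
Your setup and your treatment of the half-space $\{e\cdot x\ge 0\}$ match the paper's argument: choose the elimination order so that a class $\him$ with $\gamma_{\him}>0$ is the last customer leaf eliminated, set $\Bar{u}^c=e_{\him}$, and observe that after relabeling the resulting matrix $M_{+}$ is lower triangular with $(I,I)$ entry $\gamma_{I}>0$. But the step you yourself flag as the crux --- producing a single diagonal $Q$ that also controls $M_{-}=B_{1}+(B_{2}\Bar{u}^{s})e\transp$ on $\{e\cdot x\le 0\}$ --- is left unresolved, and the difficulty you describe (the rank-one perturbation inflating with $\norm{Q}$) is genuine whenever $B_{2}\Bar{u}^{s}\ne 0$. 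The missing observation is that this vector can be made to vanish: take $\Bar{u}^{s}=e_{\hjm}$, where $\hjm$ is the unique server pool adjacent to $\him$ in the final edge of the elimination. With this choice the second argument $\beta=-\nex e_{\hjm}$ of $G$ is supported on the single coordinate $\beta_{\hjm}$, and $\widehat{G}[\Bar{u}](x)$ can be evaluated by a realization of the algorithm that never references $\beta_{\hjm}$ (use the row-sum identity $\psi_{\him\hjm}=\alpha_{\him}-\sum_{j\ne\hjm}\psi_{\him j}$ for the last edge; equivalently, the $\hjm$-th column of $B_{2}$ is identically zero, as one checks in every example of Section~\ref{sec-example}). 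Hence the term $\nex B_{2}\Bar{u}^{s}$ drops out entirely, $M_{-}=B_{1}$, and $\pex$ survives only in the $I$-th coordinate, where the abandonment converts the diagonal entry into $-\gamma_{I}$ on $\{e\cdot x\ge0\}$ while it is $-\mu_{IJ}$ on $\{e\cdot x\le0\}$.

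Once this is in place there is no tension between the two cones: $M_{+}$ and $M_{-}$ are both lower triangular with strictly positive diagonal entries and differ only in row $I$, so a diagonal $Q$ with weights decaying sufficiently fast along the elimination order satisfies $x\transp(QM_{\pm}+M_{\pm}\transp Q)x\ge c\abs{x}^{2}$ simultaneously, and your reduction then yields \eqref{E-stab}; in particular you need neither the cone decomposition of Theorem~\ref{T-PA} nor any reconciliation of ``the orderings the two matrices separately prefer.'' Without identifying the correct $\Bar{u}^{s}$, however, the proposal does not close: the negative half-space is exactly where your argument stops, and it is the one place where the choice of control, rather than a generic estimate, carries the proof.
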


\begin{proof}
Let $\him\in\cI$ be such that
$\gamma_{\him}>0$.
At each step of the algorithm the graph $\cG^{k}$ has at least two leaves
in $\cI$, unless it has maximum degree zero.
We eliminate the leaves in $\cI$ sequentially until we end up with a graph
consisting only of the edge $(\him,\hjm)$.
Then we set $\Bar{u}^c_{\him}=\Bar{u}^s_{\hjm}=1$.
This defines $\Bar{u}^c$ and $\Bar{u}^s$.
It is clear that $\Bar{u}= (\Bar{u}^c,\Bar{u}^s) \in \Act$. 
Note also that in the new ordering of the indices (replace with
the permutation $\pi$) we have $\him = I$
and and we can also let $\hjm = J$.

By construction (see also proof of Lemma~\ref{lem-drift}),  the drift takes
the form
\begin{equation*}
b_{i}(x, u_0) \;=\;
\begin{cases} \widehat b_{i}(x)\,, &\text{if}~i<I\,,\\[3pt]
\Tilde{b}_{I}(x_{1},\dotsc,x_{I-1}) - \mu_{IJ} x_{I}
-(\gamma_{I}-\mu_{IJ})\,\pex
+ \ell_{I} \,, &\text{if}~i=I\,,
\end{cases}
\end{equation*}
where $\widehat b$ is as in \eqref{E-hatb}. Note that the term
$(e\cdot x)^{-}$ does not appear in $b_i(x,u_0)$. 
The result follows by the lower-diagonal structure of the drift. 
\end{proof}

\begin{remark}
It is well known \cite[Lemma~2.5.5]{book} that
\eqref{E-stab} implies that
\begin{equation}\label{E-geom}
\Exp_{x}^{\Bar{u}}\bigl[\Lyap(X_{t})\bigr]\;\le\; \frac{\kappa_{0}}{\kappa_{1}}
+\Lyap(x)\,\E^{-\kappa_{1}t}\,, \qquad\forall x\in\RI\,,\quad \forall t\ge0\,.
\end{equation}
\end{remark}

\subsection{Special cases} \label{sec-special}
In the unconstrained control problems, we have assumed that the running cost
function $r(x, u)$ takes the form
in \eqref{E-cost}, where both the vectors $\xi$ and $\zeta$ are positive. 
However, if we were to select $\zeta\equiv 0$
(thus penalizing only the queue), 
then in order to apply the framework in
Section~\ref{sec-general}, we need to
verify Hypothesis~\ref{HypA} for a cone of the form
\begin{equation}\label{E-cKd}
\cK_{\delta,+}\;\df\;
\bigl\{ x\in\RI\,\colon e\cdot x > \delta \abs{x}\bigr\}\,,
\end{equation}
for some $\delta>0$. 
Hypothesis~\ref{HypA} relative to a cone $\cK_{\delta,+}$ implies
that, for some $\kappa>0$, we have
\begin{equation}\label{E-cKd2}
J_{v}\bigl[\nex\bigr]\;\le\;\kappa\, J_{v}\bigl[\pex\bigr]
\qquad \forall\,v\in\Usm\,.
\end{equation}
In other words, if under some Markov control the average queue length is finite,
then so is the average idle time.

Consider the ``W" model in Example~\ref{Ex2}.
When $e\cdot x<0$, the drift is 
\begin{equation*}
b(x,u)\;=\;- \begin{bmatrix} \mu_{11}  & 0 & 0 \\[5pt]
\mu_{21} (1+u^s_1) + \mu_{22} u^s_2 & \mu_{21} u^s_1 + \mu_{22} u^s_2 & 
\mu_{21} u^s_1 + \mu_{22} u^s_2 \\[5pt]
0 & 0 & \mu_{32} 
\end{bmatrix} x + \ell \,.
\end{equation*}
We leave it to the reader to verify that Hypothesis~\ref{HypA} holds
relative to a cone $\cK_{\delta,+}$ with a function $\Lyap$ of the
form $\Lyap(x) = (x\transp Q x)^{\nicefrac{m}{2}}$.
The same holds for the ``N" model, and the model in Example~\ref{Ex4}.

However for the ``M" model,
when $e\cdot x <0$, the drift takes the form
\begin{equation*}
b(x,u)\;=\;- \begin{bmatrix}  \mu_{12} (1-u^s_1) +  \mu_{11} u^s_1 
& (\mu_{11} - \mu_{12}) u^s_1 \\[5pt]
(\mu_{23} - \mu_{22}) u^s_3 & \mu_{22} (1- u^s_3) + \mu_{23}  u^s_3 
\end{bmatrix} x + \ell\,. 
\end{equation*}
Then it does not seem possible
to satisfy Hypothesis~\ref{HypA} relative to the cone
$\cK_{\delta,+}$,
unless restrictions on the parameters are imposed, for example,
if the service rates for each class do not differ much among the servers.
We leave it to the reader to verify that, provided
\begin{equation*}
\abs{\mu_{11} - \mu_{12}}\vee\abs{\mu_{23} - \mu_{22}}\;\le\;\tfrac{1}{2}\,
(\mu_{12}\wedge\mu_{22})\,,
\end{equation*}
Hypothesis~\ref{HypA} holds
relative to the cone $\cK_{\delta,+}$,
with $Q$ equal to the identity matrix. 
 An important implication from this example is that the ergodic
control problem may not be well posed if only the queueing cost is minimized
without penalizing the idleness either by including it in the running cost,
or by imposing constraints in the form of \eqref{diff-opt-c2}.

We present two results concerning special networks.

\begin{corollary}\label{C4.1}
Consider the ergodic control problem in \eqref{diff-opt}
with $X$ in \eqref{hatX} and $r(x,u)$ in \eqref{E-cost} with $\zeta\equiv 0$. 
For any $m\ge1$,  there exist positive
constants $\delta$, $\Tilde\delta$, and $\Tilde{\kappa}$, and a positive definite
$Q\in\RR^{I\times I}$ such that, if the service rates satisfy
\begin{equation*}
\max_{i\in\cI,\,j,k\in\cJ(i)}\;
\abs{\mu_{ij}-\mu_{ik}}\;\le\; \Tilde\delta\,
\max_{i\in\cI,\,j\in\cJ}\;\{\mu_{ij}\}\,,
\end{equation*}
then with $\Lyap(x) = (x\transp Q x)^{\nicefrac{m}{2}}$
\and $\cK_{\delta,+}$ in \eqref{E-cKd}  we have
\begin{equation*}
\Lg^{u}\Lyap(x) \;\le\; \Hat{\kappa} - \abs{x}^{m}\qquad
\forall x\in \cK_{\delta,+}^{c}\,,\quad \forall u\in\Act\,.
\end{equation*}
\end{corollary}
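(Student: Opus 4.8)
The plan is to run the argument of Theorem~\ref{T-PA}, but with the Lyapunov inequality verified on the \emph{one-sided} cone $\cK_{\delta,+}^{c}=\{x\in\RR^{I}\colon e\cdot x\le\delta\abs{x}\}$ of \eqref{E-cKd}. The new feature, compared to Theorem~\ref{T-PA}, is that $\cK_{\delta,+}^{c}$ contains the whole half-space $\{e\cdot x\le0\}$, where $(e\cdot x)^{-}$ is of order $\abs{x}$ rather than $\sorder(\abs{x})$, so the term $(e\cdot x)^{-}B_{2}u^{s}$ in the drift \eqref{diff-drift2} cannot simply be discarded; controlling it is exactly where the hypothesis on the service rates enters. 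Take $\Lyap(x)=(x\transp Qx)^{\nicefrac{m}{2}}$ for $\abs{x}\ge1$ (extended to a $\Cc^{2}$ function near the origin), with $Q$ a diagonal positive definite matrix to be chosen, and split $\cK_{\delta,+}^{c}$ into $\{e\cdot x\le0\}$ and $\{0<e\cdot x\le\delta\abs{x}\}$. Since $\upsigma$ is constant, $\Lg^{u}\Lyap(x)=m(x\transp Qx)^{\nicefrac{m}{2}-1}\langle Qx,b(x,u)\rangle+\order\bigl((x\transp Qx)^{\nicefrac{m}{2}-1}\bigr)$, so it suffices to show $\langle Qx,b(x,u)\rangle\le-\abs{x}^{2}$ for $\abs{x}$ large, uniformly over $u\in\Act$, on each of the two regions.

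On $\{0<e\cdot x\le\delta\abs{x}\}$ we have $(e\cdot x)^{-}=0$, so by \eqref{diff-drift2} $b(x,u)=-B_{1}x+(e\cdot x)(B_{1}-\varGamma)u^{c}+\ell$. Fix the diagonal $Q>0$ so that $x\transp(QB_{1}+B_{1}\transp Q)x\ge8\abs{x}^{2}$, which is possible precisely because $-B_{1}$ is Hurwitz and lower-diagonal with positive diagonal (the same choice as in the proof of Theorem~\ref{T-PA}). Then, using $\abs{u^{c}}\le1$, $\langle Qx,b(x,u)\rangle\le-4\abs{x}^{2}+\delta\norm{Q}\norm{B_{1}-\varGamma}\abs{x}^{2}+\norm{Q}\abs{\ell}\abs{x}$, so taking $\delta$ small enough gives $\langle Qx,b(x,u)\rangle\le-\abs{x}^{2}$ for $\abs{x}$ large. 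This step uses nothing about the spread of the service rates.

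On $\{e\cdot x\le0\}$ we have $(e\cdot x)^{+}=0$, and since $(e\cdot x)^{-}B_{2}u^{s}=-(e\cdot x)\,B_{2}u^{s}=-\bigl[(B_{2}u^{s})e\transp\bigr]x$ on this set, \eqref{diff-drift} gives $b(x,u)=-M(u^{s})x+\ell$ with $M(u^{s})=B_{1}+(B_{2}u^{s})e\transp$. The key point, read off from the leaf elimination algorithm, is that $M(u^{s})$ is a small perturbation of a fixed diagonal matrix. Indeed, here $b_{i}(x,u)=-\sum_{j\in\cJ(i)}\mu_{ij}\widehat{G}_{ij}[u](x)+\ell_{i}$, and by \eqref{diff-map2}--\eqref{diff-map} the matrix $\bigl(\widehat{G}_{ij}[u](x)\bigr)$ is the unique solution on the tree of $\sum_{j}\widehat{G}_{ij}[u](x)=x_{i}$, $\sum_{i}\widehat{G}_{ij}[u](x)=(e\cdot x)u^{s}_{j}$, $\widehat{G}_{ij}[u](x)=0$ for $i\nsim j$, whose entries are linear in $(x,u^{s})$ with coefficients bounded by a constant depending only on $\cG$. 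Letting $j_{i}$ be the pool matched to $i$ when $i$ is eliminated as a leaf (so that $\mu_{ij_{i}}>0$ is the $i$-th diagonal entry of $B_{1}$, cf. the proof of Lemma~\ref{lem-drift}) and using $\sum_{j}\widehat{G}_{ij}[u](x)=x_{i}$,
\begin{equation*}
b_{i}(x,u)\;=\;-\mu_{ij_{i}}x_{i}-\sum_{j\in\cJ(i)}(\mu_{ij}-\mu_{ij_{i}})\,\widehat{G}_{ij}[u](x)+\ell_{i}\,.
\end{equation*}
Since $j,j_{i}\in\cJ(i)$, the hypothesis gives $\abs{\mu_{ij}-\mu_{ij_{i}}}\le\Tilde\delta\max_{k\in\cI,\,l\in\cJ}\mu_{kl}$, so $M(u^{s})$ differs from $\diag(\mu_{ij_{i}})_{i\in\cI}$ by a matrix of operator norm at most $C(\cG)\,\Tilde\delta\max_{k,l}\mu_{kl}$, uniformly over $u^{s}$ in the simplex. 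Hence, with the same $Q$, $x\transp\bigl(QM(u^{s})+M(u^{s})\transp Q\bigr)x\ge\bigl(2\min_{i}Q_{ii}\mu_{ij_{i}}-2\norm{Q}C(\cG)\Tilde\delta\max_{k,l}\mu_{kl}\bigr)\abs{x}^{2}\ge c\abs{x}^{2}>0$ once $\Tilde\delta$ is small enough, and therefore $\langle Qx,b(x,u)\rangle=-\tfrac12x\transp\bigl(QM(u^{s})+M(u^{s})\transp Q\bigr)x+\langle Qx,\ell\rangle\le-\abs{x}^{2}$ for $\abs{x}$ large.

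Combining the two regions, $\langle Qx,b(x,u)\rangle\le-\abs{x}^{2}$ on $\cK_{\delta,+}^{c}\cap\{\abs{x}\ge R_{0}\}$ for all $u\in\Act$, so $\Lg^{u}\Lyap(x)\le-m(x\transp Qx)^{\nicefrac{m}{2}-1}\abs{x}^{2}+C_{0}(x\transp Qx)^{\nicefrac{m}{2}-1}\le-c_{1}\abs{x}^{m}$ there, while $\Lg^{u}\Lyap$ and $\abs{x}^{m}$ are bounded on the complementary bounded set. Replacing $Q$ by an appropriate positive multiple so that the coefficient of $\abs{x}^{m}$ becomes $1$, and absorbing the bounded contribution into an additive constant $\Hat\kappa$, yields $\Lg^{u}\Lyap(x)\le\Hat\kappa-\abs{x}^{m}$ for all $x\in\cK_{\delta,+}^{c}$ and $u\in\Act$, as claimed. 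The one nontrivial ingredient is the structural fact of the third paragraph — that, after removing the diagonal part, the drift on $\{e\cdot x\le0\}$ has coefficients controlled by the within-class spread $\Tilde\delta\max\mu$ of the service rates; this is precisely why a one-sided cone such as $\cK_{\delta,+}$, unlike the two-sided $\cK_{\delta}$ of Theorem~\ref{T-PA}, generally requires a restriction on the parameters, as the ``M'' model illustrates.
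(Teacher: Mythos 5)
Your proof is correct and follows essentially the same route as the paper's: the paper's own argument simply observes that when the service rates within each class coincide, \eqref{diff-map}--\eqref{diff-drift} give $b_i(x,u)=-\Bar\mu_i x_i+\ell_i$ on $\{e\cdot x\le 0\}$, and then concludes ``by continuity''; your third paragraph is precisely the quantitative version of that perturbation step, writing $b_i(x,u)=-\mu_{ij_i}x_i-\sum_{j}(\mu_{ij}-\mu_{ij_i})\widehat{G}_{ij}[u](x)+\ell_i$ and bounding the off-diagonal part by the within-class spread. The treatment of the strip $\{0<e\cdot x\le\delta\abs{x}\}$ is the same as in Theorem~\ref{T-PA}, so there is no gap.
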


\begin{proof}
By \eqref{diff-map}, \eqref{diff-map2}
and \eqref{diff-drift}, if $\mu_{ij}=\mu_{ik}= \Bar{\mu}$ for all
$i\in\cI$ and $j,k\in\cJ$, then $b_{i}(x,u) = - \Bar{\mu}x_{i}$ when
$e\cdot x\le 0$, for all $i\in\cI$.
The result then follows by continuity.
\end{proof}

\begin{corollary}\label{C4.2}
Suppose there exists at most one $i\in\cI$ such that $\abs{\cJ(i)}>1$.
Then the conclusions of Corollary~\ref{C4.1} hold.
\end{corollary}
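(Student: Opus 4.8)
The plan is to exploit the rigidity that the hypothesis forces on the tree $\cG$ and then run the argument of Corollary~\ref{C4.1} with a suitably weighted quadratic Lyapunov function. First I would record the structure. If exactly one class $i_{0}\in\cI$ has $\abs{\cJ(i_{0})}>1$, then counting the edges of $\cG$ (using $\sum_{i\in\cI}\abs{\cJ(i)}=\abs{\cE}=\abs{\cI}+\abs{\cJ}-1$ together with $\sum_{i\ne i_{0}}\abs{\cJ(i)}=\abs{\cI}-1$) gives $\abs{\cJ(i_{0})}=\abs{\cJ}$, so $i_{0}$ is joined to every pool, every other class $i$ is a leaf attached to a single pool $j(i)$, and the sets $L_{j}\df\cI(j)\setminus\{i_{0}\}$ partition $\cI\setminus\{i_{0}\}$. (If no such $i_{0}$ exists then $\abs{\cJ}=1$ and every $\cJ(i)$ is a singleton, so the hypothesis of Corollary~\ref{C4.1} holds vacuously; the degenerate case $\abs{\cI}=1$ is immediate since then $\cK_{\delta,+}^{c}=\{x_{1}\le0\}$ for $\delta<1$ and there $b_{1}(x,u)=-x_{1}\sum_{j}\mu_{1j}u^{s}_{j}+\ell_{1}$ with $\sum_{j}\mu_{1j}u^{s}_{j}\ge\min_{j}\mu_{1j}>0$.) Solving \eqref{diff-map} for this tree is immediate --- $\psi_{i,j(i)}=x_{i}-(e\cdot x)^{+}u^{c}_{i}$ for leaves $i$, and $\psi_{i_{0},j}=-(e\cdot x)^{-}u^{s}_{j}-\sum_{i\in L_{j}}x_{i}+(e\cdot x)^{+}\sum_{i\in L_{j}}u^{c}_{i}$ --- so by \eqref{diff-drift}
\begin{equation*}
\begin{split}
b_{i}(x,u)&\;=\;-\mu_{i,j(i)}x_{i}+\ell_{i}+(e\cdot x)^{+}(\mu_{i,j(i)}-\gamma_{i})u^{c}_{i}\,,\qquad i\ne i_{0}\,,\\[3pt]
b_{i_{0}}(x,u)&\;=\;\sum_{i\ne i_{0}}\mu_{i_{0},j(i)}x_{i}+\ell_{i_{0}}+(e\cdot x)^{-}\sum_{j}\mu_{i_{0}j}u^{s}_{j}
-(e\cdot x)^{+}\Bigl(\sum_{i\ne i_{0}}\mu_{i_{0},j(i)}u^{c}_{i}+\gamma_{i_{0}}u^{c}_{i_{0}}\Bigr).
\end{split}
\end{equation*}

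Next I would take $\Lyap(x)=(x\transp Qx)^{\nicefrac{m}{2}}$ (smoothly modified near $0$) with $Q=\diag(q_{i})$, $q_{i_{0}}=1$, and $q_{i}$ ($i\ne i_{0}$) chosen large, and estimate $(Qx)\cdot b(x,u)$ on $\cK_{\delta,+}^{c}=\{e\cdot x\le\delta\abs{x}\}$ by splitting into $\{e\cdot x\le0\}$ and $\{0<e\cdot x\le\delta\abs{x}\}$. On the first set $(e\cdot x)^{+}=0$ and $(e\cdot x)^{-}=-x_{i_{0}}-\sum_{i\ne i_{0}}x_{i}$, so the term $q_{i_{0}}x_{i_{0}}(e\cdot x)^{-}\sum_{j}\mu_{i_{0}j}u^{s}_{j}$ contributes the genuinely negative quantity $-q_{i_{0}}\bigl(\sum_{j}\mu_{i_{0}j}u^{s}_{j}\bigr)x_{i_{0}}^{2}\le-q_{i_{0}}(\min_{j}\mu_{i_{0}j})x_{i_{0}}^{2}$ plus a bilinear cross term; together with the leaf damping $-\sum_{i\ne i_{0}}q_{i}\mu_{i,j(i)}x_{i}^{2}$, absorbing all cross terms $q_{i_{0}}\abs{x_{i_{0}}}\sum_{i\ne i_{0}}\abs{x_{i}}$ by Young's inequality with a small weight and then enlarging the $q_{i}$ gives $(Qx)\cdot b(x,u)\le-c\abs{x}^{2}+C$. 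On the second set $(e\cdot x)^{-}=0$ and all the $(e\cdot x)^{+}$-terms are $\order(\delta\abs{x}^{2})$, so $b_{i_{0}}$ carries no damping of $x_{i_{0}}$ at all; but $\abs{e\cdot x}\le\delta\abs{x}$ forces $\abs{x_{i_{0}}}\le\sqrt{\abs{\cI}-1}\bigl(\sum_{i\ne i_{0}}x_{i}^{2}\bigr)^{1/2}+\delta\abs{x}$, whence $\sum_{i\ne i_{0}}x_{i}^{2}\ge\kappa\abs{x}^{2}$ for $\delta$ small, so that the leaf damping alone, once $q_{i}\mu_{i,j(i)}\ge q_{i_{0}}(\abs{\cI}-1)\max_{j}\mu_{i_{0}j}+1$, again yields $(Qx)\cdot b(x,u)\le-c\abs{x}^{2}+C+\order(\delta\abs{x}^{2})$, the last term killed by shrinking $\delta$.

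Combining the two regions, $(Qx)\cdot b(x,u)\le-c_{0}\abs{x}^{2}+C_{0}$ on $\cK_{\delta,+}^{c}$ for all $u\in\Act$; since $\nabla\Lyap(x)=m(x\transp Qx)^{\nicefrac{m}{2}-1}Qx$, while the second-order part of $\Lg^{u}\Lyap$ is $\order(\abs{x}^{m-2})$ and hence negligible relative to $\abs{x}^{m}$, a rescaling of $Q$ yields $\Lg^{u}\Lyap(x)\le\Hat\kappa-\abs{x}^{m}$ on $\cK_{\delta,+}^{c}$ for all $u\in\Act$, which is the conclusion of Corollary~\ref{C4.1}. The main obstacle --- and the reason this does not follow from Corollary~\ref{C4.1} itself --- is that the rates $\mu_{i_{0}j}$ at the center are completely unconstrained and, on the thin set $0<e\cdot x\le\delta\abs{x}$, the center coordinate has no drift-damping of its own; the way around it is precisely the dichotomy above: either the $(e\cdot x)^{-}$ term damps $x_{i_{0}}$, or the cone constraint ties $x_{i_{0}}$ to the leaf coordinates, so in both cases it suffices to weight the (purely diagonally damped) leaf coordinates heavily in $Q$. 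The remaining effort is the routine bookkeeping needed to arrange that every $(e\cdot x)^{\pm}$-term is either nonpositive or $\order(\delta\abs{x}^{2})$.
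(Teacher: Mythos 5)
Your proof is correct, and it follows the route the paper intends: the paper's own proof of Corollary~\ref{C4.2} is just the one-line remark that the claim ``follows by a straightforward application of the leaf elimination algorithm,'' and your argument is precisely that application carried out in full --- the edge count showing $i_{0}$ is adjacent to every pool, the explicit solution of \eqref{diff-map} for this tree, and the two-region Lyapunov estimate on $\cK_{\delta,+}^{c}$ exploiting the $(e\cdot x)^{-}$ damping of $x_{i_{0}}$ on $\{e\cdot x\le 0\}$ and the cone constraint on the thin slab $\{0<e\cdot x\le\delta\abs{x}\}$. The bookkeeping (order of choices: fix $\kappa$, then the weights $q_{i}$, then shrink $\delta$) is sound.
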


\begin{proof}
The proof follows by a straightforward application
of the leaf elimination algorithm.
\end{proof}

%
%\begin{remark}
%Consider the multiclass multi-pool networks with pool-dependent service rates,
%that is, $\mu_{ij} = \mu_j$ for all $i \in \cI(j)$ and $j \in \cJ$.
%It was shown in \cite{Atar-09} that the controlled limiting diffusion
%$X$ can be reduced to a one-dimensional diffusion model when the
%objective is to minimize delay costs (as discounted functions of queue
%lengths in the infinite horizon). 
%When the running cost function $r(x,u)$ takes the form in \eqref{E-cost},
%penalizing both the queue and idleness, the controlled diffusion does not
%reduce to a one-dimensional diffusion model.  
%% \footnote{\color{black} it is actually a nontrivial problem, and the claim needs to be verified carefully here as in \cite{Atar-09}. }
%%It can be shown that with $r(x,u)$ having $\zeta\equiv 0$ in \eqref{E-cost},
%%the ergodic control problem in \eqref{diff-opt} is equivalent to
%%an ergodic control problem for a reduced one-dimensional
%%controlled diffusion.  \footnote{\color{black} for this claim, do we need to say that "given the problem is well posed"? 
%%\color{black}  We need to explain how the reduction to one-dimension
%%is done. It is not clear to me now that I read it.
%%The columns add to $\beta_{j}$, but we cannot just add the equations. }
%\end{remark}

\begin{remark} \label{R4.6}
Consider the single-class multi-pool network (inverted ``V" model).
This model has been studied in \cite{Armony-05, AW-10}.
The service rates are pool-dependent, $\mu_j$ for $j \in \cJ$.
The limiting diffusion $X$ is one-dimensional.
It is easy to see from \eqref{diff-drift} that 
\begin{align*}
b(x,u) &\;=\;  x^{-} \sum_{j\in \cJ} \mu_j u^s_j - \gamma x^{+}  + \ell
\nonumber\\[5pt]
&\;=\; -\gamma x + x^{-} \Bigl( \sum_{j\in \cJ} \mu_j u^s_j + \gamma\Bigr) + \ell\,. 
\end{align*}
It can be easily verified that the controlled diffusion $X$ for this model
not only satisfies Hypothesis~\ref{HypA} relative to  $\cK_{\delta,+}$,
but it is positive recurrent under any Markov control,
and the set of invariant probability distributions
corresponding to stationary Markov controls is tight.
\end{remark}

\begin{remark}
Consider the multiclass multi-pool networks with class-dependent service rates,
that is, $\mu_{ij} = \mu_i$ for all $j \in \cJ(i)$ and $i \in \cI$.
In the leaf elimination algorithm, the sum of of the elements of
row $i$ of the matrix
$\Psi(\alpha, \beta)$ is equal to $\alpha_i$, for each $i\in \cI$.
Thus, by \eqref{diff-drift}, we have
\begin{equation*}
b_i(x,u)\;=\;b_i(x, (u^c, u^s))=  - \mu_i (x_i - (e\cdot x)^{+} u^c_i)
- \gamma_i (e\cdot x)^{+} u^c_i + \ell_i \qquad\forall\,i\in\cI\,. 
\end{equation*}
This drift is independent of $u^s$, and has the same form as the piecewise
linear drift studied in the multiclass single-pool model in \cite{ABP14}.
Thus,
the controlled diffusion $X$ for this model
satisfies Hypothesis~\ref{HypA} relative to  $\cK_{\delta,+}$.
Also Hypothesis~\ref{HypB}
holds for general running cost functions that are continuous,
locally Lipschitz and have at most polynomial growth, as shown in \cite{ABP14}. 
\end{remark} 

\medskip
\section{Characterization of Optimality} \label{sec-HJB}

In this section, we characterize the optimal controls via
the HJB equations associated with the ergodic control problems
for the limiting diffusions. 

\subsection{The discounted control problem}

The discounted control problem for the multiclass multi-pool network
has been studied in \cite{Atar-05a}.
The results strongly depend on estimates on moments of the controlled
process that are subexponential in the time variable.
We note here that the discounted infinite
horizon control problem is always solvable for the multiclass multi-pool
queueing network at the diffusion scale, without requiring any additional
hypotheses (compare with the assumptions in Theorem~1 of \cite{Atar-05a}).
Let $g\colon \RR^{I}\times\Act\to \RR_{+}$ be a continuous function,
which is locally Lipschitz in $x$ uniformly in $u$, and has at
most polynomial growth.
For $\theta>0$, define
\begin{equation}\label{E-disc}
\mathscr{J}_{\theta}(x;U)\;\df\;
\Exp^{U}_{x}\biggl[\int_{0}^{\infty} \E^{-\theta s}
g(X_{s},U_{s})\,\D{s}\biggr]\,.
\end{equation}
It is immediate by \eqref{E-geom} that $\mathscr{J}_{\theta}(x;\Bar{u})<\infty$
and that it inherits a polynomial growth from $g$.
Therefore $\inf_{U\in\Uadm}\;\mathscr{J}_{\theta}(x;U)<\infty$.
It is fairly standard then to show (see Section~3.5.2 in \cite{book})
that $V_{\theta}(x)\df\inf_{U\in\Uadm}\,\mathscr{J}_{\theta}(x;U)$
is the minimal nonnegative solution in
$\Cc^{2}(\RI)$ of the discounted HJB equation
\begin{equation*}
\frac{1}{2}\trace\bigl(\Sigma\Sigma\transp \nabla^{2} V_{\theta}(x)\bigr)
+ H(x,\nabla V_{\theta}) \;=\;\theta\, V_{\theta}(x)\,, \quad x\in\RI\,,
\end{equation*}
where
\begin{equation} \label{E-H}
H(x,p)\;\df\;\min_{u\in\Act}\;\bigl[b(x,u)\cdot p + g(x,u)\bigr]\,.
\end{equation}
Moreover, a stationary Markov control $v$ is optimal for the criterion
in \eqref{E-disc} if and only if it satisfies
\begin{equation*}
b\bigl(x,v(x)\bigr)\cdot\nabla V_{\theta}(x)
+ g\bigl(x,v(x)\bigr)\;=\;
H\bigl(x,\nabla V_{\theta}(x)\bigr)\quad\text{a.e.~in~} \RR^{I}\,.
\end{equation*}
%Asymptotic optimality  holds under the general hypotheses of
%Theorems~\ref{thm-conv-lower} and \ref{thm-conv-upper}.

\subsection{The HJB for the unconstrained problem}
\label{sec-HJBunconstrained}
The ergodic control problem for the limiting diffusion
falls under the general framework in \cite{ABP14}. 
We state the results for the existence of an optimal stationary Markov control,
and the existence and characterization of the HJB equation. 

Recall the definition of $J_{x,U}[r]$ and $\varrho^{*}(x)$
in \eqref{diff-cost}--\eqref{diff-opt}, and recall from
Section~\ref{sec-general} that if $v\in\Ussm$, then $J_{x,v}[r]$
does not depend on $x$ and is denoted by $J_{v}[r]$.
Consequently, if the ergodic control problem is well posed,
then $\varrho^{*}(x)$ does not depend on $x$.
We have the following theorem.

\begin{theorem}
There exists a stationary Markov control $v \in \Ussm$ that is optimal, i.e.,
it satisfies $J_{v}[r] = \varrho^*$. 
\end{theorem}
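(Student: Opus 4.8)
The plan is to obtain the statement directly from the abstract theory reviewed in Section~\ref{sec-structural}. By \cite[Theorems~3.4 and 3.5]{ABP14} (see also Lemma~\ref{L3.1} and the discussion following it), once Hypotheses~\ref{HypA} and \ref{HypB} are in force for the controlled diffusion $X$ in \eqref{hatX} and the running cost $r$ in \eqref{E-cost}, the ergodic control problem \eqref{diff-cost}--\eqref{diff-opt} is well posed, the constant $\varrho^{*}$ in \eqref{E-average} is attained over the set $\eom$ of ergodic occupation measures, and the optimal measure may be chosen to correspond to a \emph{precise} stationary Markov control. Since the limiting diffusion has a piecewise-linear drift (Lemma~\ref{lem-drift}) and a constant nondegenerate dispersion matrix $\Sigma$, and $r$ is continuous, nonnegative and locally Lipschitz in $x$ uniformly in $u$, the standing assumptions (A1)--(A4) hold. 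Thus the entire proof reduces to verifying Hypotheses~\ref{HypA} and \ref{HypB}.

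Hypothesis~\ref{HypA} is exactly Theorem~\ref{T-PA}: take $\cK=\cK_{\delta}$ as in \eqref{E-cK} for $\delta>0$ small, $h(x)=\Tilde{C}\abs{x}^{m}$, and $\Lyap(x)=(x\transp Q x)^{\nicefrac{m}{2}}$ for $\abs{x}\ge1$ with $Q$ diagonal positive definite, which is available precisely because $-B_{1}$ is a Hurwitz lower-diagonal matrix (cf.\ Lemma~\ref{lem-drift}). Both $\Lyap$ and $h$ are inf-compact since $m\ge1$ and $Q$ is positive definite, while $r(\cdot\,,u)$ is inf-compact on $\cK_{\delta}$ because $\min_{u\in\Act}r(x,u)\ge c\,\abs{e\cdot x}^{m}\ge c\delta^{m}\abs{x}^{m}$ there, using $\xi,\zeta>0$ together with the constraints $e\cdot u^{c}=e\cdot u^{s}=1$.

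For Hypothesis~\ref{HypB} I would appeal to Theorem~\ref{T-stab}. When $\gamma\neq0$ it supplies a constant Markov control $\Bar{u}\in\Act$ and, for our exponent $m$, a Lyapunov function $\Lyap(x)=(x\transp Q x)^{\nicefrac{m}{2}}$ satisfying \eqref{E-stab}, hence the bound \eqref{E-geom}. Since $r(x,u)\le C\abs{e\cdot x}^{m}\le C'(1+\Lyap(x))$, integrating \eqref{E-geom} gives $J_{x,\Bar{u}}[r]=\limsup_{T\to\infty}\frac{1}{T}\Exp_{x}^{\Bar{u}}\bigl[\int_{0}^{T}r(X_{t},\Bar{u})\,\D t\bigr]\le C'(1+\kappa_{0}/\kappa_{1})<\infty$, so Hypothesis~\ref{HypB} holds with $\Hat{U}=\Bar{u}$. (If $\gamma\equiv0$ one instead invokes the special-case analysis of Section~\ref{sec-special}: Hypothesis~\ref{HypA} relative to the smaller cone $\cK_{\delta,+}$ in \eqref{E-cKd} together with $\zeta>0$ again produces a stable control of finite ergodic cost, though in that regime a restriction on the network or parameters, as in Corollaries~\ref{C4.1}--\ref{C4.2}, is required.)

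With both hypotheses in place one concludes: the optimal ergodic occupation measure $\Bar\uppi$ supplied by \cite[Theorem~3.4]{ABP14} lies in $\eom$, hence $\Bar\uppi=\uppi_{v}$ for some $v\in\Ussm$, and by the extreme-point argument it may be taken to correspond to a precise stationary Markov control; then $J_{v}[r]=\uppi_{v}(r)=\Bar\uppi(r)=\varrho^{*}$, which is the assertion. The substance of the argument really lies in Theorems~\ref{T-PA} and \ref{T-stab}, which carried the burden through the leaf-elimination drift representation of Lemma~\ref{lem-drift}; the only care needed here is the bookkeeping that lets a single common exponent $m$ serve simultaneously in the running cost, in the Lyapunov functions of Theorems~\ref{T-PA} and \ref{T-stab}, and in the inf-compactness and growth requirements of Hypotheses~\ref{HypA}--\ref{HypB}.
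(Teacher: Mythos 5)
Your proposal is correct and follows essentially the same route as the paper: Hypothesis~\ref{HypA} is supplied by Theorem~\ref{T-PA}, Hypothesis~\ref{HypB} by the stable constant control of Theorem~\ref{T-stab} (using that $r(x,u)\le C\abs{e\cdot x}^{m}$ together with \eqref{E-geom}), and the conclusion is then read off from the abstract framework of \cite{ABP14}. The paper's proof additionally notes that \eqref{E-Lkey} holds with an inf-compact $\Tilde{h}\sim\abs{x}^{m}$, a point you gloss over, but that refinement is not needed for the bare existence assertion here.
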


\begin{proof}
Recall 
that Hypothesis~\ref{HypA} is satisfied with $h(x) \df \Tilde{C} |x|^m$ for some
constant $\Tilde{C}>0$, as in
the proof of Theorem~\ref{T-PA}.
It is rather routine to verify that
\eqref{E-Lkey} holds for an inf-compact function $\Tilde{h}\sim|x|^m$.
The result then follows from Theorem~3.2 in \cite{ABP14}.
\end{proof}

We next state the characterization of the optimal solution via the associated
HJB equations. 

\begin{theorem} \label{thm-HJB}
For the  ergodic control problem of the limiting diffusion in \eqref{diff-opt},
the following hold:

\begin{itemize}
\item[(i)]
There exists a unique solution $V \in  \Cc^2(\RR^I)\cap \order(|x|^m)$,
satisfying $V(0)=0$, to the associated HJB equation: 
\begin{equation}\label{E-HJB}
\min_{u\in\Act}\;\bigl[\Lg^{u}V(x)+r(x,u)\bigr]
\;=\;\varrho^{*}\,.
\end{equation}
The positive part of $V$ grows no faster than $|x|^m$, and its negative part
is in $\sorder\bigl(\abs{x}^{m}\bigr)$.

\smallskip
\item[(ii)]
A stationary Markov control $v$ is optimal
if and only if it satisfies
\begin{equation}\label{E-v}
H\bigl(x,\nabla V(x)\bigr)
\;=\;b\bigl(x,v(x)\bigr)\cdot\nabla V(x)
+ r\bigl(x,v(x)\bigr)\quad\text{a.e.~in~} \RR^{I}\,,
\end{equation}
where $H$ is defined in \eqref{E-H}.

\smallskip
\item[(iii)]
The function $V$ has the stochastic representation
\begin{align*}
V(x)&\;=\;
\lim_{\delta\searrow0}\;\inf_{v\,\in\,\bigcup_{\beta>0}\,\Usm^{\beta}}\;
\Exp^{v}_{x}\biggl[\int_{0}^{\tc_{\delta}}
\bigl(r\bigl(X_{s},v(X_{s})\bigr)-\varrho^{*}\bigr)\,\D{s}\biggr]\\[5pt]
&\;=\;
\lim_{\delta\searrow0}\;\Exp^{\Bar{v}}_{x}\biggl[\int_{0}^{\tc_{\delta}}
\bigl(r\bigl(X_{s},v_{*}(X_{s})\bigr)-\varrho^{*}\bigr)\,\D{s}\biggr]\nonumber
\end{align*}
for any $\Bar{v}\in\Usm$ that satisfies \eqref{E-v}, where $v_*$
is the optimal Markov control satisfying \eqref{E-v}. 
\end{itemize}
\end{theorem}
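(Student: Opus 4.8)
The plan is to deduce Theorem~\ref{thm-HJB} from the general ergodic-control theory of \cite{ABP14}, specifically \cite[Theorems~3.4 and 3.5]{ABP14} together with \cite[Lemma~3.10]{ABP14}, once it is checked that the controlled diffusion $X$ in \eqref{hatX} with running cost $r$ in \eqref{E-cost} fits that framework. The two structural hypotheses are already in place: Theorem~\ref{T-PA} gives Hypothesis~\ref{HypA} with $\cK=\cK_{\delta}$ and $h(x)=\Tilde{C}\,\abs{x}^{m}$, and, since $r$ has polynomial growth, Hypothesis~\ref{HypB} follows (given $\gamma\ne0$) from Theorem~\ref{T-stab}, whose stabilizing control $\Bar{u}$ has an invariant distribution with all moments finite, so that $J_{\Bar{u}}[r]<\infty$. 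First I would record the growth bookkeeping: from $r(x,u)\le\abs{e\cdot x}^{m}\bigl(\sum_{i}\xi_{i}+\sum_{j}\zeta_{j}\bigr)\in\order(\abs{x}^{m})$ and $h(x)=\Tilde{C}\abs{x}^{m}$, Lemma~\ref{L3.1} produces an inf-compact $\Tilde{h}$ that may be chosen with $\Tilde{h}\sim\abs{x}^{m}$; hence $\min_{u\in\Act}\Tilde{h}(\cdot,u)\sim\abs{x}^{m}$ and $\order(\min_{u}\Tilde{h}(\cdot,u))=\order(\abs{x}^{m})$, which pins down the function class in part~(i).

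The only step that genuinely uses the problem-specific structure is the estimate $\varphi_{0}\in\order(\min_{u\in\Act}\Tilde{h}(\cdot,u))$, where $\varphi_{0}$ is the minimal nonnegative solution of the Poisson equation \eqref{E-vphi0} for the reference control $v_{0}=\Bar{u}$. I would obtain it from the geometric drift inequality \eqref{E-stab} of Theorem~\ref{T-stab}: choosing there the Lyapunov function with exponent $m$ gives $\Lyap\sim\abs{x}^{m}\sim\Tilde{h}$, and \eqref{E-geom} yields $\Exp_{x}^{\Bar{u}}\bigl[\Lyap(X_{t})\bigr]\le\kappa_{0}/\kappa_{1}+\Lyap(x)\E^{-\kappa_{1}t}$; combined with $\abs{\Tilde{h}(x,\Bar{u}(x))-\eta_{0}}\le C(1+\Lyap(x))$ and the stochastic representation of $\varphi_{0}$, this gives $0\le\varphi_{0}(x)\le C'(1+\Lyap(x))\in\order(\abs{x}^{m})$. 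With this in hand all the hypotheses of \cite[Theorems~3.4 and 3.5]{ABP14} are met.

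Part~(i) then follows from \cite[Theorem~3.4]{ABP14} (existence of $V\in\Cc^{2}(\RI)\cap\order(\abs{x}^{m})$ solving \eqref{E-HJB}, with $V(0)=0$, $V^{+}\in\order(\abs{x}^{m})$ and $V^{-}\in\sorder(\abs{x}^{m})$) and from \cite[Theorem~3.5]{ABP14} (uniqueness within that class), where one also uses that $\varrho^{*}$ is the value of the ergodic problem, established in the theorem preceding this one. Part~(ii) is the verification statement of \cite[Theorem~3.4]{ABP14}: applying the It\^o--Krylov formula to $V$ along $X$ under an arbitrary $v\in\Ussm$, the bound $V^{-}\in\sorder(\abs{x}^{m})$ together with the moment estimate \eqref{E-geom} lets one discard the boundary term, turning \eqref{E-HJB} into the inequality $J_{v}[r]\ge\varrho^{*}$ with equality precisely when $v$ realizes the pointwise minimum \eqref{E-v} a.e.; conversely, any stable $v$ satisfying \eqref{E-v} is optimal. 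Part~(iii), the representation of $V$ as the $\delta\searrow0$ limit of the exit-time functionals over $\bigcup_{\beta>0}\Usm^{\beta}$ and its attainment by any $\Bar{v}$ satisfying \eqref{E-v}, follows from \cite[Lemma~3.10]{ABP14}.

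I expect the main obstacle to be precisely the growth-rate matching in the second paragraph---establishing $\Tilde{h}\sim\abs{x}^{m}$ and $\varphi_{0}\in\order(\abs{x}^{m})$, and thereby that $V^{-}\in\sorder(\abs{x}^{m})$---since this is exactly what the implicit linear-programming description of the drift in Section~\ref{sec-diffusionlimit} does not supply. It rests on the explicit lower-diagonal, piecewise-linear form of the drift from Lemma~\ref{lem-drift} and on the geometric ergodicity under $\Bar{u}$ from Theorem~\ref{T-stab} (which is why abandonment, $\gamma\ne0$, is assumed); once these estimates are secured, the remainder of the proof is a routine transcription of the arguments in \cite{ABP14}.
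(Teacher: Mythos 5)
Your proposal is correct and follows essentially the same route as the paper: verify Hypotheses~\ref{HypA} and \ref{HypB} via Theorems~\ref{T-PA} and \ref{T-stab}, note $\Tilde{h}\sim\abs{x}^{m}$, and then invoke Theorems~3.4 and 3.5, Lemma~3.10, and Theorems~4.1--4.2 of \cite{ABP14}. The only difference is that you spell out the estimate $\varphi_{0}\in\order(\abs{x}^{m})$ needed for the growth bound on $V^{+}$, which the paper leaves implicit (it only records the fact $\Tilde{h}\sim\Lyap\sim\abs{x}^{m}$ later, in the proof of Theorem~\ref{T5.4}); this is a useful and correct addition rather than a departure.
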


\begin{proof}
The existence of a solution $V$ to the HJB \eqref{E-HJB} follows from
Theorem~3.4 in \cite{ABP14}.
It is facilitated by defining a running cost function
$r_{\varepsilon}(x, u)\,\df\, r(x,u) + \varepsilon \Tilde{h}(x,u)$ for
$\varepsilon>0$, and studying the corresponding ergodic control problem.
Uniqueness of the solution $V$ follows from Theorem~3.5 in \cite{ABP14}.  

The claim that the positive part of $V$ grows no faster than
$|x|^m$   follows from Theorems 4.1 and 4.2 in \cite{ABP14}, and the claim that
its negative part is in $\sorder\bigl(\abs{x}^{m}\bigr)$ follows from
Lemma~3.10 in \cite{ABP14}.

Parts (ii)--(iii) follow from Theorem~3.4 in \cite{ABP14}.
\end{proof}

For uniqueness of solutions to HJB,  see \cite[Theorem~3.5]{ABP14}.

The HJB equation in \eqref{E-HJB} can be also obtained via the traditional
vanishing discount approach. 
For $\alpha>0$ we define
\begin{equation} \label{E-V-alpha}
V_{\alpha}(x)\;\df\;\inf_{U\in\Uadm}\;
\Exp^{U}_{x}\biggl[\int_{0}^\infty \E^{-\alpha t}
r(X_{t},U_{t})\,\D{t}\biggr]\,.
\end{equation}
The following result follows directly from Theorem~3.6 of \cite{ABP14}.

\begin{theorem} \label{T5.3}
Let $V_{*}$ and $\varrho^{*}$ be as in Theorem~\ref{thm-HJB},
and let $V_{\alpha}$ be as in  \eqref{E-V-alpha}. 
The function $V_{\alpha} - V_{\alpha}(0)$ converges, as $\alpha\searrow0$,
to $V_{*}$, uniformly on compact subsets of $\RR^{I}$.
Moreover, $\alpha V_{\alpha}(0) \to \varrho^{*}$, as $\alpha\searrow0$.
\end{theorem}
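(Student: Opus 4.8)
The statement is precisely the conclusion of the vanishing-discount analysis, so the plan is to reduce it to the abstract result, Theorem~3.6 of \cite{ABP14}, by checking that the hypotheses of that theorem are met in the present setting. The key structural facts we have available are: (i) Hypothesis~\ref{HypA} holds relative to $\cK=\cK_{\delta}$ with $h(x)=\Tilde C\abs{x}^m$, by Theorem~\ref{T-PA}; (ii) Hypothesis~\ref{HypB} holds whenever $\gamma\ne0$, via the stable constant control $\Bar u$ from Theorem~\ref{T-stab}; and (iii) by the remark following Theorem~\ref{T-stab} (equation \eqref{E-geom}) the process under $\Bar u$ satisfies the geometric moment bound $\Exp_x^{\Bar u}[\Lyap(X_t)]\le\frac{\kappa_0}{\kappa_1}+\Lyap(x)\E^{-\kappa_1 t}$ with $\Lyap\sim\abs{x}^m$. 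These are exactly the ingredients Theorem~3.6 of \cite{ABP14} requires: an inf-compact running cost dominated by a function $\Tilde h\sim\abs{x}^m$ (constructed in Lemma~\ref{L3.1}), a stabilizing control making the discounted value $V_\alpha$ locally bounded uniformly in $\alpha$, and the HJB equation \eqref{E-HJB} with its solution $V_*$ from Theorem~\ref{thm-HJB}.

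Concretely, I would proceed as follows. First, recall from the proof of Theorem~\ref{thm-HJB} that $\Tilde h\sim\abs{x}^m$ is inf-compact and that Lemma~\ref{L3.1}(c) gives $J_{x,U}[\Tilde h]\le k_0(1+J_{x,U}[r])$; in particular $J_{x,\Bar u}[\Tilde h]<\infty$ by \eqref{E-geom}. Second, observe that for each $\alpha>0$ the value $V_\alpha$ in \eqref{E-V-alpha} is finite and, using the control $\Bar u$ together with \eqref{E-geom}, satisfies $V_\alpha(x)\le \Exp_x^{\Bar u}\bigl[\int_0^\infty\E^{-\alpha t}r(X_t,\Bar u)\,\D t\bigr]\le C(1+\abs{x}^m)$ with $C$ independent of $\alpha$; this uniform local bound plus a standard Harnack/elliptic-estimate argument (as in Section~3.5.2 of \cite{book}) yields equicontinuity of $\{V_\alpha-V_\alpha(0)\}$ on compact sets and boundedness of $\{\alpha V_\alpha(0)\}$. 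Third, invoke Theorem~3.6 of \cite{ABP14}: passing to the limit $\alpha\searrow0$ along a subsequence produces a solution of \eqref{E-HJB}, which by the uniqueness assertion of Theorem~\ref{thm-HJB} must equal $(V_*,\varrho^*)$; since the limit is identified uniquely, the whole family converges, giving $V_\alpha-V_\alpha(0)\to V_*$ uniformly on compacta and $\alpha V_\alpha(0)\to\varrho^*$.

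The main obstacle — really the only non-bookkeeping point — is establishing the $\alpha$-uniform growth estimate on $V_\alpha$ and the consequent equicontinuity, since everything downstream is a verbatim appeal to \cite{ABP14}. This is where the explicit piecewise-linear drift representation \eqref{diff-drift2} and the geometric bound \eqref{E-geom} do the work: they guarantee that $\Bar u$ is not merely stable but controls $\Exp_x^{\Bar u}[\abs{X_t}^m]$ with an exponential rate, so the discounted cost under $\Bar u$ has polynomial growth uniformly in $\alpha\in(0,1]$. Given that bound, the interior estimates for the resolvent equation $\Lg^u V_\alpha=\alpha V_\alpha-r$ and the monotonicity/selection arguments are entirely standard. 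I therefore expect the proof to be short: state the verification of the hypotheses, cite \eqref{E-geom} for the uniform bound, and conclude by Theorem~3.6 of \cite{ABP14} together with the uniqueness in Theorem~\ref{thm-HJB}.
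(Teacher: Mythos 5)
Your proposal is correct and follows essentially the same route as the paper, which simply observes that the result follows directly from Theorem~3.6 of \cite{ABP14} once Hypotheses~\ref{HypA} and \ref{HypB} are in place (via Theorems~\ref{T-PA} and \ref{T-stab}). The additional detail you supply on the $\alpha$-uniform polynomial bound for $V_\alpha$ via \eqref{E-geom} is exactly the verification implicit in that citation, so nothing is missing.
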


The result that follows
concerns the approximation technique via spatial truncations of
the control.
For more details, including the properties of the associated
approximating HJB equations we refer the reader to \cite[Section~4]{ABP14}.

\begin{theorem}\label{T5.4}
Let $\Bar{u}\in\Act$ satisfy \eqref{E-stab}.
There exists a sequence $\{v_{k}\in\Ussm : k\in\NN\}$ such
that each $v_{k}$ agrees with $\Bar{u}$ on $B_{k}^{c}$, and
\begin{equation*}
J_{v_{k}}[r]\;\xrightarrow[k\to\infty]{}\;\varrho^{*}\,.
\end{equation*}
\end{theorem}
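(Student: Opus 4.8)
The plan is to obtain Theorem~\ref{T5.4} as a direct consequence of the spatial-truncation result Lemma~\ref{L3.2}, applied with the stabilizing control $v_{0}=\Bar{u}$, by identifying the optimal ergodic occupation measures of the truncated problems with the desired controls $v_{k}$. First I would record that the standing hypotheses of Lemma~\ref{L3.2} are in force. Hypothesis~\ref{HypA} holds by Theorem~\ref{T-PA}, with $h(x)=\Tilde{C}\abs{x}^{m}$, so the inf-compact dominating function $\Tilde{h}$ furnished by Lemma~\ref{L3.1} satisfies $\Tilde{h}\sim\abs{x}^{m}$, and in particular $\min_{u\in\Act}\Tilde{h}(\cdot\,,u)\sim\abs{x}^{m}$. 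Hypothesis~\ref{HypB} holds with $\Hat{U}=\Bar{u}$: by \eqref{E-stab} and \eqref{E-geom} the invariant measure $\uppi_{\Bar{u}}$ has finite moments of every order, and since $r$ in \eqref{E-cost} has at most polynomial growth in $x$, this gives $J_{\Bar{u}}[r]=\uppi_{\Bar{u}}(r)<\infty$. In particular $\uppi_{\Bar{u}}(r)<\infty$, so $\Bar{u}$ is a legitimate choice for the control $v_{0}$ in Definition~\ref{D-truncation}.

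The crux of the proof is to verify the proviso of Lemma~\ref{L3.2}(iii)--(iv), namely that the minimal nonnegative solution $\varphi_{0}$ of the Poisson equation \eqref{E-vphi0} relative to $v_{0}=\Bar{u}$ lies in $\order\bigl(\min_{u\in\Act}\Tilde{h}(\cdot\,,u)\bigr)=\order(\abs{x}^{m})$. For this I would exploit that the Lyapunov function $\Lyap(x)=(x\transp Q x)^{\nicefrac{m}{2}}$ of \eqref{E-stab} has exactly the same growth as $\Tilde{h}(\cdot\,,\Bar{u})$: after multiplying $\Lyap$ by a sufficiently large constant, \eqref{E-stab} yields $\Lg^{\Bar{u}}\Lyap(x)\le\kappa_{2}-\Tilde{h}\bigl(x,\Bar{u}(x)\bigr)$ for all $x\in\RI$ and some $\kappa_{2}>0$. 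Since $\varphi_{0}$ is the minimal nonnegative solution of $\Lg^{\Bar{u}}\varphi_{0}=\eta_{0}-\Tilde{h}(\cdot\,,\Bar{u})$ with $\eta_{0}=\uppi_{\Bar{u}}(\Tilde{h})<\infty$ (finiteness from \eqref{E-key}), a standard comparison/supermartingale argument along the lines of the proof of \cite[Theorem~4.1]{ABP14} then gives $\varphi_{0}\in\order(\Lyap)=\order(\abs{x}^{m})$.

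With this established, Lemma~\ref{L3.2} applies with $v_{0}=\Bar{u}$. For $R=k\in\NN$, part~(i) provides $\Bar\uppi^{k}\in\Bar\eom(k)$, an optimal ergodic occupation measure for the truncated cost $r^{k}$. Disintegrating $\Bar\uppi^{k}(\D{x},\D{u})=\mu_{v_{k}}(\D{x})\,v_{k}(\D{u}\mid x)$ and noting that $b^{k}(x,u)$ is independent of $u$ off $\Bar{B}_{k}$, we may take $v_{k}\in\Usm(k,\Bar{u})$; moreover $v_{k}\in\Ussm$, since on $\Bar{B}_{k}^{c}$ it coincides with $\Bar{u}$, for which \eqref{E-stab} gives $\Lg^{v_{k}}\Lyap=\Lg^{\Bar{u}}\Lyap\le\kappa_{0}-\kappa_{1}\Lyap<0$ outside a ball independent of $k$. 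Finally, for any $v\in\Usm(R,\Bar{u})$ the truncated and original data agree along $v$, i.e.\ $b^{R}(x,v(x))=b(x,v(x))$ and $r^{R}(x,v(x))=r(x,v(x))$ for all $x$, so the controlled diffusion and its ergodic cost under \eqref{E-sdeR} coincide with those under \eqref{hatX}. Hence $J_{v_{k}}[r]=\uppi_{v_{k}}(r^{k})=\Bar\uppi^{k}(r^{k})$, and Lemma~\ref{L3.2}(iv) gives $\Bar\uppi^{k}(r^{k})\to\varrho^{*}$ as $k\to\infty$, which is the assertion.

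I expect the only nontrivial step to be the bound $\varphi_{0}\in\order(\abs{x}^{m})$; once it is in place, everything else is a matter of unwinding Definition~\ref{D-truncation} and Lemma~\ref{L3.2}. The secondary point to handle carefully is the routine identification of the truncated optimizer $\Bar\uppi^{k}$ with a \emph{stable} stationary Markov control that agrees with $\Bar{u}$ off $\Bar{B}_{k}$, together with the observation that truncated and untruncated ergodic costs coincide along such controls.
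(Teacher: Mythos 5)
Your proposal is correct and follows essentially the same route as the paper: the paper's proof simply invokes Theorems~4.1 and 4.2 of \cite{ABP14} (restated here as Lemma~\ref{L3.2}) together with the fact that $\Tilde{h}\sim\Lyap\sim\abs{x}^{m}$, which is precisely the point you spell out to verify $\varphi_{0}\in\order\bigl(\min_{u\in\Act}\Tilde{h}(\cdot\,,u)\bigr)$. The additional details you supply (checking Hypotheses~\ref{HypA}--\ref{HypB}, identifying $\Bar\uppi^{k}$ with a stable control in $\Usm(k,\Bar{u})$, and noting that truncated and untruncated costs agree along such controls) are the routine unwinding the paper leaves implicit.
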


\begin{proof}
This follows by Theorems~4.1 and 4.2 in \cite{ABP14}, using the fact
that $\Tilde{h}\sim\Lyap\sim\abs{x}^{m}$.
\end{proof}

Since $\Act$ is convex, and
$r$ as defined in \eqref{E-cost} is convex in $u$, we have the following.

\begin{theorem}\label{C5.1}
Let $\Bar{u}\in\Act$ satisfy \eqref{E-stab}. 
Then, for any given $\varepsilon>0$, there exists an $R>0$ and an
$\varepsilon$-optimal continuous precise control
$v_{\varepsilon} \in \Ussm$ which is equal to $\Bar{u}$ on $B_{R}^{c}$.
In other words, if $\uppi_{v_{\varepsilon}}$ is the ergodic
occupation measure corresponding to $v_{\varepsilon}$, then 
\begin{equation*}
\uppi_{v_{\varepsilon}} (r)
\;=\; \int_{\Rd\times\Act} r(x,u)\, \uppi_{v_{\varepsilon}}(\D{x},\D{u}) \;\le\;
\varrho^* + \varepsilon\,.
\end{equation*}
\end{theorem}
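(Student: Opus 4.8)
The plan is to combine the spatial truncation of Theorem~\ref{T5.4} with the convexity of $\Act$ and of $r(x,\cdot)$, followed by a mollification step, proceeding in three stages. First, by Theorem~\ref{T5.4} there is an $R>0$ and a stationary Markov control $\Hat{v}$ agreeing with $\Bar{u}$ on $B_{R}^{c}$ and satisfying $\uppi_{\Hat{v}}(r)=J_{\Hat{v}}[r]\le\varrho^{*}+\nicefrac{\varepsilon}{3}$. If $\Hat{v}$ is relaxed, I pass to its barycenter $v(x)\df\int_{\Act}u\,\Hat{v}(\D{u}\mid x)$, which lies in $\Act$ because $\Act$ is convex. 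Since the drift $b(x,\cdot)$ in \eqref{diff-drift2} is affine, $b\bigl(x,v(x)\bigr)=\int_{\Act}b(x,u)\,\Hat{v}(\D{u}\mid x)$, so the controlled diffusion, and hence its invariant probability measure, is unchanged; and since $r(x,\cdot)$ is convex, Jensen's inequality gives $r\bigl(x,v(x)\bigr)\le\int_{\Act}r(x,u)\,\Hat{v}(\D{u}\mid x)$, whence $\uppi_{v}(r)\le\uppi_{\Hat{v}}(r)$. Thus we may assume $\Hat{v}$ is a \emph{precise} (measurable) control, equal to $\Bar{u}$ off $B_{R}$, with $\uppi_{\Hat{v}}(r)\le\varrho^{*}+\nicefrac{\varepsilon}{3}$.

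Next I regularize. Let $\rho_{\theta}$ be a standard mollifier supported in the ball of radius $\theta\le 1$, and set $v_{\theta}\df\Hat{v}*\rho_{\theta}$. Then $v_{\theta}(x)$, being an average of values of $\Hat{v}$ and $\Act$ being convex, lies in $\Act$; $v_{\theta}$ is smooth, hence continuous; and $v_{\theta}\equiv\Bar{u}$ on $B_{R+1}^{c}$. In particular the Lyapunov estimate \eqref{E-stab} holds for $v_{\theta}$ on $B_{R+1}^{c}$, while on the compact set $\Bar{B}_{R+1}$ one has $\abs{\Lg^{v_{\theta}}\Lyap}\le C_{R}$ with $C_{R}$ independent of $\theta$ (as $b$ is bounded over $\Act$ on $\Bar{B}_{R+1}$). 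Hence $v_{\theta}\in\Ussm$, and applying Theorem~\ref{T-stab} with $m'\df m+1$ in place of $m$ to get a Lyapunov function $\Lyap_{m'}(x)=(x\transp Qx)^{\nicefrac{m'}{2}}$, the same computation yields $\sup_{\theta\le1}\uppi_{v_{\theta}}(\Lyap_{m'})<\infty$. Since $r\le\Tilde{h}\in\order(\abs{x}^{m})$ and $m'>m$, the family $\{\uppi_{v_{\theta}}\colon\theta\le1\}$ is uniformly $r$-integrable.

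Finally, as $\theta\searrow0$ we have $v_{\theta}\to\Hat{v}$ a.e., and since $\Act$ is bounded, dominated convergence shows $v_{\theta}\to\Hat{v}$ in the topology of Markov controls. By the continuity of the map $v\mapsto\uppi_{v}$ on this truncated (uniformly stable) family of diffusions \cite{ABP14,book}, $\uppi_{v_{\theta}}\to\uppi_{\Hat{v}}$ in $\cP(\RI\times\Act)$, and combined with the uniform $r$-integrability this upgrades to $\uppi_{v_{\theta}}(r)\to\uppi_{\Hat{v}}(r)$. Choosing $\theta$ small enough that $\uppi_{v_{\theta}}(r)\le\uppi_{\Hat{v}}(r)+\nicefrac{\varepsilon}{3}$ and setting $v_{\varepsilon}\df v_{\theta}$ produces a continuous precise control $v_{\varepsilon}\in\Ussm$, equal to $\Bar{u}$ on $B_{R+1}^{c}$, with $\uppi_{v_{\varepsilon}}(r)\le\varrho^{*}+\varepsilon$, which is the assertion.

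The main obstacle is the last stage: weak convergence $\uppi_{v_{\theta}}\to\uppi_{\Hat{v}}$ by itself yields only $\liminf_{\theta}\uppi_{v_{\theta}}(r)\ge\uppi_{\Hat{v}}(r)$, the wrong direction, so one genuinely needs the uniform moment bound to force convergence of the costs — this is exactly where the availability of higher-order polynomial Lyapunov functions from Theorem~\ref{T-stab} is essential. Convexity of $\Act$ is what keeps both the barycenter and the mollified controls inside the action set, and convexity of $r(x,\cdot)$ ensures passage from a relaxed to a precise control does not increase the cost.
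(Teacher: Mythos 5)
Your proposal is correct, but it reaches the conclusion by a genuinely different route than the paper. The paper's proof perturbs the running cost to $r+\tfrac{\varepsilon}{3}\Tilde{f}(u)$ with $\Tilde{f}$ strictly convex, so that the minimizer of the truncated HJB \eqref{E-HJB-R} is automatically a \emph{unique continuous precise} selector $v_{\varepsilon,R}$ inside $B_{R}$; continuity across $\partial B_{R}$ (where the truncation switches to the constant control $\Bar{u}$) is then restored by blending $v_{\varepsilon,R}$ with $\Bar{u}$ via cutoff functions $\chi_{k}$, and the cost is controlled exactly as in your last stage, by convergence of the ergodic occupation measures in the topology of Markov controls together with uniform integrability of the running cost. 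You instead start from an arbitrary (possibly relaxed, possibly discontinuous) near-optimal control supplied by Theorem~\ref{T5.4}, reduce to a precise control via the barycenter and Jensen's inequality — which is legitimate here because the drift \eqref{diff-drift2} is affine in $u$, $\Act$ is convex, and $r(x,\cdot)$ is convex — and then mollify, supplying the needed uniform $r$-integrability through the higher-order Lyapunov function from Theorem~\ref{T-stab}. The trade-off: the paper's construction produces the continuous control as an exact minimizer of a (perturbed, truncated) HJB, which feeds directly into the dynamic-programming and asymptotic-optimality machinery, and it does not require the drift to be affine in the control; your argument is more elementary and bypasses the truncated HJB entirely, but it does lean on the specific affine-in-$u$ structure of $b$ for the relaxed-to-precise step, whereas the paper obtains a precise control for free from the strictly convex minimization. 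Your closing observation — that weak convergence alone gives only $\liminf_{\theta}\uppi_{v_{\theta}}(r)\ge\uppi_{\Hat{v}}(r)$ and that the uniform moment bound is what forces convergence of the costs — correctly identifies the same crux the paper's proof must also address.
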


\begin{proof}
Let $\Tilde{f}: \Act \to [0,1]$ be some strictly convex continuous function,
and define $r^\varepsilon(x,u) \df r(x,u) + \frac{\varepsilon}{3} \Tilde{f}(u)$, for
$\varepsilon>0$.
Let $\varrho^{*}_{\varepsilon}$ be the optimal value of the ergodic
problem with running cost $r^\varepsilon$.
It is clear that $\varrho^{*}_{\varepsilon}\le \varrho^{*} +\frac{\varepsilon}{3}$.

Let $v_{0}\in\Ussm$ be the constant control which is equal to $\Bar{u}$, and
for each $R \in \NN$, let $b^R(x,u)$ be as
defined in \eqref{bR} and analogously define $r_\varepsilon^{R}(x,u)$ as
in \eqref{rR} relative to $r_\varepsilon$.
Let $\Lg^{u}_{R}$ denote the controlled extended generator of the
diffusion with drift $b^R$ in \eqref{E-sdeR}.
Consider the associated HJB equation
\begin{equation}\label{E-HJB-R}
\min_{u\in\Act}\;\bigl[\Lg_{R}^{u}V_R(x)+r_\varepsilon^{R}(x,u)\bigr]
\;=\;\varrho(\varepsilon,R)\,.
\end{equation}
Since $u\mapsto \bigl[b^R(x,u)\cdot V_R+r_\varepsilon^{R}(x,u)\bigr]$
is strictly convex in $u$ for $x\in B_{R}$, and Lipschitz in $x$,
it follows that there is a (unique) continuous selector
$v_{\varepsilon,R}$ from the minimizer in \eqref{E-HJB-R}.
By Theorem~\ref{T5.4} (see also Theorems~4.1 and 4.2 in \cite{ABP14})
we can select $R$ large enough so that
\begin{equation}\label{ECT5.4a}
\varrho(\varepsilon,R)\,\le\, \varrho^{*}_{\varepsilon}+\frac{\varepsilon}{3}\,.
\end{equation}

Next we modify $v_{\varepsilon,R}$ so as to make it continuous on $\Rd$.
Let $\{\chi_k: k \in \NN\}$ be a sequence of cutoff functions such that
$\chi_k \in [0,1]$, $\chi_k \equiv 0$ on $B^c_{R-\frac{1}{k}}$,
and $\chi_k \equiv 1$ on $B_{R- \frac{2}{k}}$.
For $R$ fixed and satisfying \eqref{ECT5.4a},
define the sequence of controls
$\Tilde{v}_{k,\varepsilon} (x) \df
\chi_k(x) v_{\varepsilon,R}(x) + (1- \chi_k(x)) v_0(x)$,
and let $\uppi_{k}$ denote the associated sequence of ergodic occupation
measures. 
It is evident that $\Tilde{v}_{k,\varepsilon} \to v_{\varepsilon,R}$
in the topology of Markov controls \cite[Section~2.4]{book}.
Moreover, the sequence of measures $\uppi_{k}$ is tight, and 
therefore converges as $k\to\infty$ to the ergodic
occupation measure $\uppi_{\varepsilon}$ corresponding to $v_{\varepsilon,R}$
\cite[Lemma~3.2.6]{book}.
Since $r_\varepsilon$ is uniformly integrable with respect to the
sequence $\{\uppi_{k}\}$, we have
\begin{equation*}
\int_{\Rd\times\Act}r_\varepsilon(x,u)\,\uppi_{k}(\D{x},\D{u})
\;\xrightarrow[k\to\infty]{}\;\varrho(\varepsilon,R)\,.
\end{equation*}
Combining this with the earlier estimates finishes the proof.
\end{proof}

\subsection{The HJB for the constrained problem}
\label{sec-HJBconstrained}

As seen from Theorems~\ref{T3.1} and \ref{T-unique},
the dynamic programming formulation of the problem with constraints
in \eqref{diff-opt-c1}--\eqref{diff-opt-c2} follows in exactly the
same manner as the unconstrained problem.
Also, Theorems~\ref{T3.3} and \ref{T3.4} apply.

We next state the analogous results of Theorems~\ref{T5.4}
and~\ref{C5.1} for the constrained problem.
%These will enable the proof of asymptotic optimality
%for the ergodic control problem of the multiclass multi-pool
%systems under constraints.
% in Section~\ref{sec-Conv-U}.

\begin{theorem} \label{T5.5}
Let $\Bar{u}\in\Act$ satisfy \eqref{E-stab}.
Suppose that $\updelta\in (0,\infty)^{J}$ is feasible.
Then there exist  $k_0 \in \NN$ and a sequence
$\{v_k\in \Ussm\,\colon\, k \in \NN\}$
such that for each $k \ge k_0$, $v_k$ is equal to $\Bar{u}$ on $B^c_k$ and 
\begin{equation*}
J_{v_k}[r_0]
\;\xrightarrow[k\to\infty]{}\; \varrho_{c}^{*}\;=\;
\inf_{\uppi\,\in\,\sH(\updelta)}\;\uppi(r_{0}) \,.
\end{equation*}

\end{theorem}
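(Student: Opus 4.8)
The plan is to reduce the statement to the constrained spatial-truncation machinery of Section~\ref{sec-constraint} (Theorems~\ref{T3.3} and \ref{T3.4}), taking $\Bar{u}$ for the control $v_{0}$ of Definition~\ref{D-truncation}, in complete analogy with the way Theorem~\ref{T5.4} is deduced from \cite[Theorems~4.1 and 4.2]{ABP14}. First I would record the structural inputs that those theorems demand. In the notation of Section~\ref{sec-constraint} we have $\Bar{k}=J$, with $r_{0}$ as in \eqref{E-cost} with $\zeta\equiv0$ and $r_{j}(x,u)=[(e\cdot x)^{-}u^{s}_{j}]^{m}$ for $j\in\cJ$; since $u^{s}$ lies in the unit simplex, $\sum_{j\in\cJ}(u^{s}_{j})^{m}$ is bounded between two positive constants, so the aggregate running cost $r=r_{0}+\sum_{j\in\cJ}r_{j}$ is equivalent to a cost of the form \eqref{E-cost} with strictly positive $\xi$ and $\zeta$. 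Hence Theorem~\ref{T-PA} gives Hypothesis~\ref{HypA} for $r$ with $\cK=\cK_{\delta}$ as in \eqref{E-cK} and $h\sim\abs{x}^{m}$, and I would check (exactly as in Section~\ref{sec-HJBunconstrained}) that the inf-compact function $\Tilde{h}$ of Lemma~\ref{L3.1} may be chosen with $\Tilde{h}\sim\Lyap\sim\abs{x}^{m}$. Since $\Bar{u}$ satisfies \eqref{E-stab} with such a $\Lyap$, the bound \eqref{E-geom} yields $\uppi_{\Bar{u}}(r)<\infty$, so Hypothesis~\ref{HypB} holds and $v_{0}=\Bar{u}$ is a legitimate truncation control; moreover \eqref{E-geom} gives that the minimal nonnegative solution $\varphi_{0}$ of the Poisson equation \eqref{E-vphi0} for $v_{0}=\Bar{u}$ lies in $\order(\Lyap)\subset\order\bigl(\min_{u\in\Act}\Tilde{h}(\cdot\,,u)\bigr)$, which is precisely the growth condition required by Theorems~\ref{T3.3} and \ref{T3.4}.

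With these inputs in hand I would proceed as follows. Feasibility of $\updelta$ together with Theorem~\ref{T3.3}(a) produces an $R_{0}>0$ such that the truncated constrained problem over $\eom(R)$ is feasible for every $R>R_{0}$. Fixing an integer $k_{0}>R_{0}$, for each $k\ge k_{0}$ Lemma~\ref{L3.3} applied to $\eom(k)$ yields a precise stationary Markov control whose ergodic occupation measure $\uppi^{*}_{k}\in\sH(\updelta;k)$ attains $\inf_{\uppi\in\sH(\updelta;k)}\uppi(r_{0})$; by Theorem~\ref{T3.4}(iii) this control may be taken to satisfy the truncated HJB selection \eqref{E-3.13new}. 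Since the truncated drift $b^{k}$ in \eqref{E-sdeR} equals $b(\cdot\,,\Bar{u})$ on $\Bar{B}_{k}^{c}$ independently of the control, I would take this control to equal $\Bar{u}$ on $B_{k}^{c}$ and call it $v_{k}$, and set $v_{k}\df\Bar{u}$ for $k<k_{0}$. Then for $k\ge k_{0}$ one has $v_{k}\in\Ussm\cap\Usm(k,v_{0})$, $v_{k}$ agrees with $\Bar{u}$ on $B_{k}^{c}$, and $v_{k}$ is feasible for the constrained problem, since $\uppi_{v_{k}}(r_{j})=\uppi^{*}_{k}(r_{j})\le\updelta_{j}$ for all $j\in\cJ$.

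To conclude I would identify the limit of $J_{v_{k}}[r_{0}]$. The generator of $X$ under $v_{k}$ coincides with that of the truncated diffusion \eqref{E-sdeR} with $R=k$ under the corresponding control---the two agree on $B_{k}$ by construction and on $B_{k}^{c}$ because there $b^{k}$ is control-independent---so the two diffusions share the same invariant probability measure, whence $\uppi_{v_{k}}=\uppi^{*}_{k}$ and $J_{v_{k}}[r_{0}]=\uppi^{*}_{k}(r_{0})=\inf_{\uppi\in\sH(\updelta;k)}\uppi(r_{0})$. By Theorem~\ref{T3.3}(b) the right-hand side tends, as $k\to\infty$, to $\inf_{\uppi\in\sH(\updelta)}\uppi(r_{0})=\varrho^{*}_{c}$, which is the assertion. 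The main obstacle---and the only genuinely non-mechanical step---is the verification of the two prerequisites of the first paragraph: that $r=r_{0}+\sum_{j}r_{j}$ satisfies Hypothesis~\ref{HypA}, which I reduce to Theorem~\ref{T-PA} through the elementary simplex bounds on $\sum_{j}(u^{s}_{j})^{m}$, and that $\varphi_{0}$ grows no faster than $\abs{x}^{m}$, which I obtain from the geometric moment estimate \eqref{E-geom}; once these are in place the result follows from Theorems~\ref{T3.3} and \ref{T3.4} just as Theorem~\ref{T5.4} follows from the corresponding results of \cite{ABP14}.
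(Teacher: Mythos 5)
Your argument is correct and takes essentially the same route as the paper, whose entire proof is the citation of Theorem~\ref{T3.4}; you have simply made explicit the prerequisites that the paper leaves implicit, namely that Hypothesis~\ref{HypA} for $r_{0}+\sum_{j}r_{j}$ follows from Theorem~\ref{T-PA} via the two-sided simplex bounds on $\sum_{j}(u^{s}_{j})^{m}$, and that \eqref{E-geom} forces $\varphi_{0}\in\order(\Lyap)\subset\order\bigl(\min_{u\in\Act}\Tilde{h}(\cdot\,,u)\bigr)$. The verification is sound and nothing further is needed.
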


\begin{proof}
This follows from Theorem~\ref{T3.4}. 
\end{proof}

Since $r_j(x,u)$ defined in \eqref{E-rj} is convex in $u$ for
$j=0,1,\dotsc,J$, we have the following.

\begin{theorem} \label{C5.2}
Let $\Bar{u}\in\Act$ satisfy \eqref{E-stab}.
Suppose that $\updelta\in (0,\infty)^{J}$ is feasible.
Then for any given $\varepsilon>0$, there exists $R_{0}>0$
and a family continuous precise controls
$v_{\varepsilon,R} \in \Ussm$, $R>R_{0}$
satisfying the following:
\begin{itemize}
\item[(i)]
Each $v_{\varepsilon,R}$ is equal to $\Bar{u}$ on $B_{R}^{c}$.

\smallskip
\item[(ii)]
The corresponding ergodic occupation measures
$\uppi_{v_{\varepsilon,R}}$ satisfy
\begin{subequations}
\begin{align}
\uppi_{v_{\varepsilon,R}}(r_0) &\;\le\; \varrho_{c}^{*}
+ \varepsilon\qquad\forall\,R>R_{0}\,,\label{EC5.2a}\\[5pt]
\sup_{R>R_{0}}\;\uppi_{v_{\varepsilon,R}}(r_j)&\;<\; \updelta_j\,,\qquad
j\in\cJ\,.\label{EC5.2b}
\end{align}
\end{subequations}
\end{itemize}
\end{theorem}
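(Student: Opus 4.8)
The plan is to combine the spatial truncation result for the constrained problem, Theorem~\ref{T3.4}, with a convexification-plus-mollification argument patterned on the proof of Theorem~\ref{C5.1}. First I would strictly convexify the running costs: fix a strictly convex continuous function $\Tilde{f}\colon\Act\to[0,1]$ and, for $\varepsilon>0$, set $r_{0}^{\varepsilon}(x,u)\df r_{0}(x,u)+\tfrac{\varepsilon}{3}\Tilde{f}(u)$, leaving $r_{j}$, $j\in\cJ$, unchanged. Since $\uppi(\Tilde f)\le 1$ for every $\uppi\in\eom$, the optimal value $\varrho_{c}^{*}(\varepsilon)$ of the $\varepsilon$-perturbed constrained problem satisfies $\varrho_{c}^{*}(\varepsilon)\le\varrho_{c}^{*}+\tfrac{\varepsilon}{3}$, and feasibility of $\updelta$ is preserved because the constraint functions $r_{j}$ are untouched. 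One must check that Hypothesis~\ref{HypA} still holds for $r^{\varepsilon}\df r_{0}^{\varepsilon}+\sum_{j}r_{j}$; this is immediate since the added term $\tfrac{\varepsilon}{3}\Tilde f$ is bounded.

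Next I would invoke Theorem~\ref{T3.4} for the $\varepsilon$-perturbed constrained problem with the baseline control $v_{0}\equiv\Bar{u}$ (which is stable by \eqref{E-stab}, so $\uppi_{v_{0}}(r^{\varepsilon})<\infty$, and $\varphi_{0}\in\order(\Tilde h)$ since $\Tilde h\sim\Lyap\sim\abs{x}^{m}$ as established in Section~\ref{sec-verify}). This produces, for all $R$ larger than some $R_{0}$, solutions $V_{R}$ and multipliers $\uplambda^{*}_{R}$ to the truncated HJB \eqref{E-HJB-n}, with $V_{R}\to\varphi_{*}$ and $\uplambda^{*}_{R}\to\uplambda^{*}$ along a subsequence, and with $\uppi^{*}_{R}(r_{0}^{\varepsilon})\xrightarrow{R\to\infty}\varrho_{c}^{*}(\varepsilon)$ by Theorem~\ref{T3.3}(b). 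For $R$ fixed, the map $u\mapsto b^{R}(x,u)\cdot\nabla V_{R}(x)+g^{R}_{\updelta,\uplambda^{*}_{R}}(x,u)$ is, for $x\in B_{R}$, strictly convex in $u$ (the drift $b$ is affine in $u$ by Lemma~\ref{lem-drift}, the costs $r_{j}$ are convex in $u$ by \eqref{E-cost} and \eqref{E-rj}, and the $\tfrac{\varepsilon}{3}\Tilde f$ term is strictly convex) and Lipschitz in $x$, so there is a \emph{unique continuous} selector $v_{\varepsilon,R}$ from the minimizer in \eqref{E-3.13new} on $B_{R}$, and it equals $v_{0}=\Bar{u}$ off $B_{R}$; this gives (i). Choosing $R_{0}$ larger if necessary so that $\uppi^{*}_{R}(r_{0}^{\varepsilon})\le\varrho_{c}^{*}(\varepsilon)+\tfrac{\varepsilon}{3}$ for $R>R_{0}$ and using $r_{0}\le r_{0}^{\varepsilon}$, we obtain $\uppi_{v_{\varepsilon,R}}(r_{0})\le\uppi_{v_{\varepsilon,R}}(r_{0}^{\varepsilon})=\uppi^{*}_{R}(r_{0}^{\varepsilon})\le\varrho_{c}^{*}+\varepsilon$ for all $R>R_{0}$, which is \eqref{EC5.2a}. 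For \eqref{EC5.2b}: by Theorem~\ref{T3.3}(a) the optimal truncated measures satisfy $\limsup_{R\to\infty}\uppi^{*}_{R}(r_{j})\le\updelta^{\varepsilon}_{j}<\updelta_{j}$ for the slightly shrunk budgets $\updelta^{\varepsilon}$, and since $r_{j}\le\Tilde h$ with $\sup_{R>R_{0}}\uppi^{*}_{R}(\Tilde h)<\infty$ (from \eqref{EL3.3b} and boundedness of $\uppi^{*}_{R}(r_{0})$), the family $\{\uppi^{*}_{R}\}$ is tight and the $r_{j}$ are uniformly integrable along it, so after discarding finitely many $R$ we get the strict inequality uniformly in $R>R_{0}$.

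The one genuinely delicate point — and the step I expect to be the main obstacle — is the passage from ``continuous selector on $B_{R}$'' to ``continuous precise control on all of $\RR^{I}$'' while preserving both the near-optimality \eqref{EC5.2a} and the \emph{strict} feasibility \eqref{EC5.2b}. As in the proof of Theorem~\ref{C5.1}, I would mollify with cutoffs $\chi_{k}\in[0,1]$, $\chi_{k}\equiv1$ on $B_{R-2/k}$, $\chi_{k}\equiv0$ on $B^{c}_{R-1/k}$, setting $\Tilde v_{k,\varepsilon}\df\chi_{k}v_{\varepsilon,R}+(1-\chi_{k})v_{0}$; then $\Tilde v_{k,\varepsilon}\to v_{\varepsilon,R}$ in the topology of Markov controls, the ergodic occupation measures $\uppi_{k}$ are tight (uniform Lyapunov bound via $\Bar{u}$) and converge to $\uppi_{v_{\varepsilon,R}}$, and since $r_{0}^{\varepsilon}$ and each $r_{j}$ are dominated by $\Tilde h$ they are uniformly integrable along $\{\uppi_{k}\}$, whence $\uppi_{k}(r_{0})\to\uppi_{v_{\varepsilon,R}}(r_{0})$ and $\uppi_{k}(r_{j})\to\uppi_{v_{\varepsilon,R}}(r_{j})$. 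For large $k$ the inequalities \eqref{EC5.2a}--\eqref{EC5.2b} survive (the latter because they were \emph{strict}), so relabelling $\Tilde v_{k,\varepsilon}$ as the desired $v_{\varepsilon,R}$ completes the proof. The subtlety is bookkeeping the three sources of slack — the $\tfrac{\varepsilon}{3}$ from convexification, the $\tfrac{\varepsilon}{3}$ from truncation, the mollification error — so that they sum to at most $\varepsilon$, and making sure the constraint budgets are shrunk (via Lemma~\ref{L3.4}, as in Theorem~\ref{T3.3}) \emph{before} truncating so that strict feasibility is available to absorb the mollification perturbation.
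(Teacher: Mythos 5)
Your proposal is correct and follows essentially the same route as the paper's proof: shrink the constraint budgets via Lemma~\ref{L3.4} to create strict-feasibility slack, add a strictly convex perturbation $\tfrac{\varepsilon}{3}\Tilde f$ so that the truncated HJB \eqref{E-3.13new} from Theorem~\ref{T3.4} admits a unique continuous selector agreeing with $\Bar u$ off $B_R$, and then glue with cutoff functions as in Theorem~\ref{C5.1}. The only cosmetic difference is that the paper perturbs the Lagrangian $g_{\updelta,\uplambda}$ rather than $r_0$ itself (which amounts to the same thing), and your write-up actually spells out the uniform-integrability argument for \eqref{EC5.2b} that the paper leaves implicit.
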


\begin{proof}
By Lemma~\ref{L3.4}, for all sufficiently small $\varepsilon>0$, there exist
$\updelta_{j}^{\varepsilon}<\updelta_{j}$,
$j\in\cJ$, such that $\updelta^{\varepsilon}$ is feasible and
\begin{equation*}
\inf_{\uppi\,\in\,\sH(\updelta^{\varepsilon})}\;\uppi(r_{0})
\;\le\; \inf_{\uppi\,\in\,\sH(\updelta)}\;\uppi(r_{0})
+ \tfrac{\varepsilon}{4}\,.
\end{equation*}
Let
\begin{equation*}
g^{\varepsilon}_{\updelta,\uplambda}(x,u)\df g_{\updelta,\uplambda}(x,u)
+ \tfrac{\varepsilon}{3}\,\Tilde{f}(u)\,,\qquad\uplambda\in \RR_+^J\,,
\end{equation*}
where $\varepsilon>0$, $g_{\updelta,\uplambda}$, is as in Definition~\ref{D3.4},
and $\Tilde{f}: \Act \to [0,1]$ is some strictly convex continuous function.
Let $v_{0}\in\Ussm$ be the constant control which is equal to $\Bar{u}$, and
for each $R \in \NN$, let $b^R(x,u)$ be as
defined in \eqref{bR}.
Recall the definition of $\eom(R)$ and $\sH(\updelta;R)$ in the
paragraph preceding Theorem~\ref{T3.3}.
By Theorem~\ref{T3.4}, there exists $\uplambda^{*}_{R} \in \RR_+^J$ such that
\begin{equation*}
\inf_{\uppi\,\in\,\eom(R)}\;
\uppi(g^{\varepsilon}_{\updelta^{\varepsilon},\uplambda^{*}_{R}})
\;=\;\inf_{\uppi\,\in\,\sH(\updelta^{\varepsilon};R)}\; \uppi
\bigl(r_{0}+\tfrac{\varepsilon}{4}\,\Tilde{f}\bigr)\,,
\end{equation*}
and \eqref{E-HJB-n} holds, and moreover, $R>0$ can be selected large
enough so that
\begin{align*}
\inf_{\uppi\,\in\,\sH(\updelta^{\varepsilon};R)}\; \uppi
\bigl(r_{0}+\tfrac{\varepsilon}{4}\,\Tilde{f}\bigr)
&\;\le\; \inf_{\uppi\,\in\,\sH(\updelta^{\varepsilon})}\; \uppi
\bigl(r_{0}+\tfrac{\varepsilon}{4}\,\Tilde{f}\bigr)+\tfrac{\varepsilon}{4}\\[5pt]
&\;\le\; \inf_{\uppi\,\in\,\sH(\updelta^{\varepsilon})}\; \uppi
\bigl(r_{0}\bigr)+\tfrac{\varepsilon}{2}\,.
\end{align*}
Combining these estimates we obtain
\begin{equation*}
\inf_{\uppi\,\in\,\eom(R)}\;
\uppi(g^{\varepsilon}_{\updelta^{\varepsilon},\uplambda^{*}_{R}})
\;\le\; \inf_{\uppi\,\in\,\sH(\updelta)}\;\uppi(r_{0})
+ \tfrac{3\varepsilon}{4}\,.
\end{equation*}
By strict convexity there exists a (unique) continuous selector
$v_{\varepsilon,R}$ from the minimizer in \eqref{E-HJB-n}.
Using a cutoff function $\chi$ as in the proof of
Theorem~\ref{C5.1}, and redefining
completes the argument. 
\end{proof}

\subsection{Fair allocation of idleness}
\label{sec-fair}
There is one special case of the ergodic problem
under constraints which is worth investigating further.
Let
\begin{equation*}
\cS^J\;\df\;\{z\in\RR^{J}_{+}\;\colon\, e\cdot z=1\}\,.
\end{equation*}
Consider the following assumption.

\begin{assumption}\label{A-fair}
Hypothesis~\ref{HypA} holds relative to a cone
$\cK_{\delta,+}$ in \eqref{E-cK}, and for every
$\Hat u^s\in\cS^J$ there exists a stationary Markov control
$v(x)=(v^{c}(x),\Hat u^s)$ such that $J_{v}[r_{0}]<\infty$.
\end{assumption}

Examples of networks that Assumption~\ref{A-fair} holds  were discussed in
Section~\ref{sec-example}.
In particular, it holds for the ``W'' network, the network in Example~\ref{Ex4},
and in general under the hypotheses of Corollaries~\ref{C4.1} and \ref{C4.2}.

Let $r_{0}(x,u)$ be as defined in \eqref{E-cost} with $\zeta\equiv 0$, and
\begin{equation*}
r_j(x,u)\;\df\;(e\cdot x)^{-}u^s_j\,,\quad j\in\cJ\,,
\end{equation*}
Let $\uptheta$ be an interior point of $\cS^{J}$,
i.e., $\uptheta_{j}>0$ for all $j\in\cJ$,
and consider the problem with constraints given by 
\begin{align} 
\varrho_{c}^{*} &\;=\; \inf_{v\,\in\,\Ussm} \;J_{v}[r_{0}]\label{E-fair1}
\\[5pt]
\text{subject to}\quad J_{v}[r_j] &\;=\; \uptheta_j\,\sum_{k=1}^{J}J_{v}[r_k]\,,
\quad j=1,\dotsc,J-1\,.
\label{E-fair2} 
\end{align}
The constraints in \eqref{E-fair2} impose fairness on idleness.
In terms of ergodic occupation measures, the problem takes the form
\begin{align} 
\varrho_{c}^{*} &\;=\; \inf_{\uppi\,\in\,\eom} \;\uppi(r_{0})\label{E-fair3}\\[5pt]
\text{subject to}\quad
\uppi(r_{j}) &\;=\; \uptheta_j\,\sum_{k=1}^{J}\uppi(r_{k})\,,
\quad j=1,\dotsc,J-1\,.
\label{E-fair4} 
\end{align}
Following the proof of Lemma~\ref{L3.3}, using \eqref{E-cKd2}
and Assumption~\ref{A-fair}, we deduce
that the infimum in \eqref{E-fair3}--\eqref{E-fair4} is finite,
and is attained at some $\uppi^{*}\in\eom$.
Define
\begin{equation*}
L(\uppi,\uplambda) \;\df\; \uppi(r_{0}) + \sum_{j=1}^{J-1}
\uplambda_{j} \Bigl(  \uppi(r_{j})-\uptheta_{j}
\,\sum_{k=1}^{J}\uppi(r_{k}) \Bigr) \,.
\end{equation*}
We have the following theorem.

\begin{theorem}\label{T-fair}
Let Assumption~\ref{A-fair} hold.
Then for any $\uptheta$ in the interior of $\cS^{J}$ there exists
a $v^{*}\in\Ussm$ which is optimal for the ergodic cost problem
with constraints in \eqref{E-fair1}--\eqref{E-fair2}.
Moreover, there exists $\uplambda^{*}\in\RR^{J-1}_{+}$ such that
\begin{equation*}
\varrho_{c}^{*} \;=\;
\inf_{\uppi\,\in\,\eom}\;L(\uppi,\uplambda^{*})\,,
\end{equation*}
and $v^{*}$ can be selected to be a precise control.
\end{theorem}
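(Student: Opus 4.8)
\emph{Overview and existence of an optimal $\uppi^{*}$.} The plan is to merge the existence argument from the proof of Lemma~\ref{L3.3}, Lagrange duality as in Lemma~\ref{L3.5}, and a Choquet extraction, after first reducing the equality constraints to inequalities so the multipliers come out nonnegative. I would begin by recording the identity $\sum_{k=1}^{J}r_k(x,u)=(e\cdot x)^{-}$, valid because $r_k(x,u)=(e\cdot x)^{-}u^{s}_k$ and $\sum_k u^s_k=1$ on $\Act$. In particular the constant control $\Bar v=(v^{c},\uptheta)$ furnished by Assumption~\ref{A-fair} is automatically feasible, since $\uppi_{\Bar v}(r_j)=\uptheta_j\,\uppi_{\Bar v}\bigl((e\cdot x)^{-}\bigr)=\uptheta_j\sum_k\uppi_{\Bar v}(r_k)$ for all $j$, while $\uppi_{\Bar v}(r_0)=J_{\Bar v}[r_0]<\infty$; hence the feasible set of \eqref{E-fair3}--\eqref{E-fair4} is nonempty and $\varrho_c^{*}<\infty$. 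Since Hypothesis~\ref{HypA} holds for $r_0$ relative to $\cK_{\delta,+}$ by Assumption~\ref{A-fair}, Lemma~\ref{L3.1} supplies an inf-compact $\Tilde h\sim\abs{x}^{m}$ with $r_0\le\Tilde h$ and $\uppi(\Tilde h)\le k_0\bigl(1+\uppi(r_0)\bigr)$ for all $\uppi\in\eom$, while \eqref{E-cKd2} gives $\uppi\bigl((e\cdot x)^{-}\bigr)\le C\bigl(1+\uppi(r_0)\bigr)$. Thus $\{\uppi\in\eom:\uppi(r_0)\le\beta\}$ is tight, and for $m>1$ the function $(e\cdot x)^{-}=\sum_k r_k$ is uniformly integrable along it (for $m=1$ one passes through the perturbed cost $r_0+\varepsilon\Tilde h$ and lets $\varepsilon\searrow0$, as elsewhere in the paper). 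Taking a minimizing sequence $\uppi_n$ for \eqref{E-fair3}--\eqref{E-fair4}, a weak limit $\uppi^{*}$ lies in $\eom$ (closed), satisfies $\uppi^{*}(r_0)\le\varrho_c^{*}$ by lower semicontinuity, and is feasible: lower semicontinuity of each $\uppi\mapsto\uppi(r_j)$ together with uniform integrability of $\sum_k r_k$ forces the equalities to pass to the limit. So $\uppi^{*}$ attains the infimum, exactly as in Lemma~\ref{L3.3}.

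\emph{Lagrange multipliers with the correct sign.} Set $c_j(\uppi)\df\uppi(r_j)-\uptheta_j\sum_k\uppi(r_k)$. The identity above yields the linear relation $\sum_{j=1}^{J}c_j\equiv0$, so the equality system $\{c_j(\uppi)=0:\,j=1,\dots,J-1\}$ is equivalent to the inequality system $\{c_j(\uppi)\le0:\,j=1,\dots,J\}$ (each $c_j\le0$ with $\sum_j c_j=0$ forces all $c_j=0$). By Assumption~\ref{A-fair} the constant controls with server component ranging over a neighbourhood of $\uptheta$ in $\cS^{J}$ realize, with finite cost, all constraint levels in a neighbourhood of $0$, so the primal value function is finite there; Lagrange multiplier theory (\cite[pp.~216--221]{Luenberger}, as in Lemma~\ref{L3.5}) then produces $\mu\in\RR^{J}_{+}$ with
\begin{equation*}
\varrho_c^{*}\;=\;\inf_{\uppi\,\in\,\eom}\;\Bigl[\uppi(r_0)+\sum_{j=1}^{J}\mu_j\,c_j(\uppi)\Bigr]\,,
\end{equation*}
and complementary slackness (automatic, since $c_j(\uppi^{*})=0$). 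Because $\sum_j c_j\equiv0$, adding a constant to $\mu$ changes nothing; I would normalize so that $\min_j\mu_j=0$ and relabel the pools so that the minimum is attained at index $J$. Then, using $c_J=-\sum_{j<J}c_j$, one rewrites $\uppi(r_0)+\sum_{j=1}^{J}\mu_j c_j(\uppi)=\uppi(r_0)+\sum_{j=1}^{J-1}(\mu_j-\mu_J)c_j(\uppi)=L(\uppi,\uplambda^{*})$ with $\uplambda^{*}_j\df\mu_j-\mu_J=\mu_j\ge0$, which is the asserted identity $\varrho_c^{*}=\inf_{\uppi\in\eom}L(\uppi,\uplambda^{*})$ with $\uplambda^{*}\in\RR^{J-1}_{+}$.

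\emph{A precise optimal control.} On the feasible set $L(\cdot,\uplambda^{*})$ agrees with $\uppi\mapsto\uppi(r_0)$, so $\uppi^{*}$ attains $\inf_{\uppi\in\eom}L(\uppi,\uplambda^{*})$; conversely complementary slackness shows any minimizer of $L(\cdot,\uplambda^{*})$ is feasible. The set $\widehat\sH\df\{\uppi\in\eom:\uppi(r_0)\le\varrho_c^{*},\;c_j(\uppi)=0,\,j=1,\dots,J-1\}$ is, by the estimates of the first step, convex, compact, and contains $\uppi^{*}$. Minimizing the affine map $\uppi\mapsto\uppi(r_0)$ over $\widehat\sH$ and applying Choquet's theorem exactly as in the proof of Lemma~\ref{L3.3}, I obtain an extreme point $\Tilde\uppi^{*}$ of $\widehat\sH$ with $\Tilde\uppi^{*}(r_0)=\varrho_c^{*}$; by \cite[Lemma~4.2.5]{book} it lies in $\eom_{\mathrm{e}}$ and so corresponds to a precise $v^{*}\in\Ussm$, which, being feasible with $\uppi_{v^{*}}(r_0)=\varrho_c^{*}$, is optimal for \eqref{E-fair1}--\eqref{E-fair2}.

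\emph{Main obstacle.} The only step that is not a routine transcription of Lemmas~\ref{L3.3} and \ref{L3.5} is securing \emph{nonnegative} multipliers: equality constraints a priori carry signed multipliers, and nonnegativity hinges on the degeneracy $\sum_{j}c_j\equiv0$, which lets one trade the $J-1$ equalities for $J$ inequalities and then shift and relabel the resulting KKT vector. The secondary care point is the borderline exponent $m=1$, where $(e\cdot x)^{-}$ is merely of the same order as $\Tilde h$, so that passing to the limit in the constraints goes through the perturbed-cost device rather than a direct uniform-integrability bound.
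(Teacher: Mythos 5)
Your existence step (tightness via $\uppi(\Tilde h)\le k_0(1+\uppi(r_0))$, closedness of $\eom$, uniform integrability of $\sum_k r_k$ to pass the \emph{equality} constraints to the limit, and the Choquet/extreme-point extraction of a precise control) is sound and in fact spells out details the paper leaves implicit. The feasibility observation for the constant control $(v^c,\uptheta)$ is also correct. The problem is the Lagrange multiplier step, which is where the actual content of the theorem lies.

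Your reduction of the $J-1$ equalities to the $J$ inequalities $c_j(\uppi)\le 0$, $j=1,\dotsc,J$, is set-theoretically correct, but it produces a \emph{degenerate} convex program to which the Kuhn--Tucker theory you invoke does not apply: since $\sum_{j=1}^{J}c_j\equiv 0$ on $\eom$, no $\uppi$ can satisfy $c_j(\uppi)<0$ for all $j$ (Slater fails identically), and the perturbed value function $w(\epsilon)=\inf\{\uppi(r_0):c_j(\uppi)\le\epsilon_j,\ j=1,\dotsc,J\}$ equals $+\infty$ on the open half-space $\sum_j\epsilon_j<0$, so it is \emph{not} finite in any neighbourhood of $0\in\RR^{J}$. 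Your supporting claim that constant controls with server component near $\uptheta$ "realize all constraint levels in a neighbourhood of $0$" is false in $\RR^{J}$: for $v=(v^c,\Hat u^s)$ one has $c_j(\uppi_v)=(\Hat u^s_j-\uptheta_j)\,\uppi_v\bigl((e\cdot x)^{-}\bigr)$, and these vectors all lie in the hyperplane $\{z:\sum_j z_j=0\}$. Consequently the $\mu\in\RR^{J}_{+}$ you shift and relabel is never produced. The observation you almost made is exactly what the paper uses, but for the \emph{equality}-constrained formulation: with $F=(F_1,\dotsc,F_{J-1})$, $F_j(\uppi)=\uppi(r_j)-\uptheta_j\sum_k\uppi(r_k)$, one shows that $0$ is an interior point of $F(\Tilde\eom)\subset\RR^{J-1}$ by exhibiting, for each $\hjm$, feasible $\Hat u^s\in\cS^{J}$ with $\Hat u^s_j=\uptheta_j$ for $j\in\{1,\dotsc,J-1\}\setminus\{\hjm\}$ and $\Hat u^s_{\hjm}$ on either side of $\uptheta_{\hjm}$, giving $2J-2$ occupation measures whose images have $0$ in the interior of their convex hull; the multiplier theorem for affine equality constraints (\cite[Problem~7, p.~236]{Luenberger}) then yields $\uplambda^{*}$. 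Your sign-repair device (subtracting $\min_j\mu_j$ using $\sum_j c_j\equiv0$) also relies on a relabelling of the pools that changes which constraint is omitted in $L(\cdot,\uplambda)$, so even granting a $\mu$, it would not deliver the stated identity for the original indexing.
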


\begin{proof}
The proof is analogous to the one in Lemma~\ref{L3.5}.
It suffices to show that the constraint is linear and feasible
(see also \cite[Problem~7, p.~236]{Luenberger}).
Let $\Tilde\eom\df\{\uppi\in\eom : \uppi(r_{0})<\infty\}$.
By the convexity of the set of ergodic occupation measures, it follows
that $\Tilde\eom$ is a convex set.
Consider the map $F:\Tilde\eom\to\RR^{J-1}$ given by
\begin{equation*}
F_{j}(\uppi)\;\df\;
\uppi(r_{j}) - \uptheta_j\,\sum_{k=1}^{J}\uppi(r_{k})\,,\quad j=1,\dotsc,J-1\,.
\end{equation*}
The constraints in \eqref{E-fair4} can be written as $F(\uppi)=0$
and therefore are linear.

We claim that $0$ is an interior point of $F(\Tilde\eom)$.
Indeed, since $\uptheta$ be an interior point of $\cS^{J}$,
for each $\hjm\in\{1,\dotsc,J-1\}$ we may select
$\Hat{u}^{s}\in\cS^{J}$ such that
$\Hat{u}^{s}_{j} =\uptheta_{j}$ for $j\in\{1,\dotsc,J-1\}\setminus\{\hjm\}$,
and $\Hat{u}^{s}_{\hjm}>\uptheta_{\hjm}$.
By Assumption~\ref{A-fair}, there exists $v\in\Ussm$, of the
form $v=(v^c,\Hat{u}^s)$ such that $\uppi_{v}\in\Tilde\eom$.
It is clear that
$F_{j}(\uppi_{v})=0$ for $j\ne\hjm$, and $F_{\hjm}(\uppi_{v})>0$.
Repeating the same argument with $\Hat{u}^{s}_{\hjm}<\uptheta_{\hjm}$
we obtain $\uppi_{v}\in\Tilde\eom$ such that
$F_{j}(\uppi_{v})=0$ for $j\ne\hjm$, and $F_{\hjm}(\uppi_{v})<0$.
Thus we can construct a collection
$\Tilde\eom_{0}=\{\Tilde\uppi_{1},\dotsc,\Tilde\uppi_{2J-2}\}$ of elements of
$\Tilde\eom$ such that $0$ is an interior point of the convex hull
of $F(\Tilde\eom_{0})$.
This proves the claim, and the theorem.
\end{proof}

\begin{remark}
Theorem~\ref{T-fair} remains of course valid if fewer than
$J-1$ constraints, or no constraints at all are imposed,
in which case the assumptions can be weakened.
For example, in the case of no constraints, we only require that
Hypothesis~\ref{HypA} holds relative to a cone
$\cK_{\delta,+}$ in \eqref{E-cK}, and the results
reduce to those of Theorem~\ref{thm-HJB}.

Also, the dynamic programming counterpart of Theorem~\ref{T-fair}
is completely analogous to Theorem~\ref{T3.1},
and the conclusions of Theorems~\ref{T5.5} and~\ref{C5.2}
hold.
\end{remark}

\section{Conclusion} \label{sec-conclusion}

We have developed a new framework to study the (unconstrained and constrained)
ergodic diffusion control problems for Markovian multiclass multi-pool networks in the
Halfin--Whitt regime. The explicit representation for the drift of the limiting
controlled diffusions, resulting from the recursive
leaf elimination algorithm of tree bipartite
networks, plays a crucial role in establishing the
needed positive recurrence properties of the limiting diffusions.
These results are relevant to the recent study of the stability/recurrence properties
of the multiclass multi-pool networks in the Halfin--Whitt regime under
certain classes of control policies
\cite{stolyar-yudovina-12b, stolyar-yudovina-12a, stolyar-14,stolyar-15}. 
The stability/recurrence properties for general multiclass multi-pool networks
under other scheduling policies remain open.
It is important to note that our approach to ergodic control of
these networks does not, a priori, rely on any uniform stability properties
of the networks.

We did not include in this paper any asymptotic optimality results
of the control policies constructed from the HJB equation in the Halfin-Whitt regime.
We can establish the lower bound following the method in \cite{ABP14} for the
``V" model, albeit with some important differences in technical details.
The upper bound is more challenging.
What is missing here, is a result analogous to Lemma~5.1 in \cite{ABP14}.
Hence we leave asymptotic optimality as the subject of a future paper.

The results in this paper may be useful to study other diffusion control
problems of multiclass multi-pool networks in the Halfin--Whitt regime. 
The methodology developed for the ergodic control of diffusions for such networks
may be applied to study other classes of stochastic networks;
for example, it remains to study ergodic control problems for multiclass
multi-pool networks that do not have a tree structure and/or have feedback.
This class of ergodic control problems of diffusions may
also be of independent interest to the ergodic control literature.
It would be interesting to study numerical algorithms, such as the
policy or value iteration schemes, for this class of models.

\section*{Acknowledgements}
The work of Ari Arapostathis was supported in part by the Office of Naval
Research through grant N00014-14-1-0196, and in part by a grant from
the POSTECH Academy-Industry Foundation.
The work of Guodong Pang is supported in part by the Marcus Endowment Grant at the 
Harold and Inge Marcus Department of Industrial and Manufacturing Engineering 
at Penn State.

\end{document}